\begin{document}

\newtheorem{theorem}{Theorem}[section]
\newtheorem{lemma}[theorem]{Lemma}
\newtheorem{proposition}[theorem]{Proposition}
\newtheorem{corollary}[theorem]{Corollary}

\newtheorem*{theorems}{Main Theorem}
\newtheorem*{maincor}{Corollary}

\theoremstyle{definition}
\newtheorem{definition}[theorem]{Definition}
\newtheorem{example}[theorem]{Example}

\theoremstyle{remark}
\newtheorem{remark}[theorem]{Remark}

\numberwithin{equation}{section}

\newenvironment{magarray}[1]
{\renewcommand\arraystretch{#1}}
{\renewcommand\arraystretch{1}}

\newcommand{\quot}[2]{
{\lower-.2ex \hbox{$#1$}}{\kern -0.2ex /}
{\kern -0.5ex \lower.6ex\hbox{$#2$}}}

\newcommand{\mapor}[1]{\smash{\mathop{\longrightarrow}\limits^{#1}}}
\newcommand{\mapin}[1]{\smash{\mathop{\hookrightarrow}\limits^{#1}}}
\newcommand{\mapver}[1]{\Big\downarrow
\rlap{$\vcenter{\hbox{$\scriptstyle#1$}}$}}
\newcommand{\liminv}{\smash{\mathop{\lim}\limits_{\leftarrow}\,}}

\newcommand{\Set}{\mathbf{Set}}
\newcommand{\Art}{\mathbf{Art}}
\newcommand{\solose}{\Rightarrow}

\newcommand{\specif}[2]{\left\{#1\,\left|\, #2\right. \,\right\}}
\newcommand{\xmid}{\;\middle|\;}

\renewcommand{\bar}{\overline}
\newcommand{\de}{\partial}
\newcommand{\debar}{{\overline{\partial}}}
\newcommand{\per}{\!\cdot\!}
\newcommand{\Oh}{\mathcal{O}}
\newcommand{\sA}{\mathcal{A}}
\newcommand{\sB}{\mathcal{B}}
\newcommand{\sC}{\mathcal{C}}
\newcommand{\sF}{\mathcal{F}}
\newcommand{\sE}{\mathcal{E}}
\newcommand{\sG}{\mathcal{G}}
\newcommand{\sH}{\mathcal{H}}
\newcommand{\sK}{\mathcal{K}}
\newcommand{\sI}{\mathcal{I}}
\newcommand{\sL}{\mathcal{L}}
\newcommand{\sM}{\mathcal{M}}
\newcommand{\sN}{\mathcal{N}}
\newcommand{\sP}{\mathcal{P}}
\newcommand{\sR}{\mathcal{R}}
\newcommand{\sS}{\mathcal{S}}
\newcommand{\sU}{\mathcal{U}}
\newcommand{\sV}{\mathcal{V}}
\newcommand{\sX}{\mathcal{X}}
\newcommand{\sY}{\mathcal{Y}}

\newcommand{\U}{\mathcal{U}}

\newcommand{\Aut}{\operatorname{Aut}}
\newcommand{\Mor}{\operatorname{Mor}}
\newcommand{\Def}{\operatorname{Def}}
\newcommand{\Hom}{\operatorname{Hom}}
\newcommand{\Hilb}{\operatorname{Hilb}}
\newcommand{\HOM}{\operatorname{\mathcal H}\!\!om}
\newcommand{\DER}{\operatorname{\mathcal D}\!er}
\newcommand{\Spec}{\operatorname{Spec}}
\newcommand{\Der}{\operatorname{Der}}
\newcommand{\Sym}{\operatorname{Sym}}
\newcommand{\Tor}{{\operatorname{Tor}}}
\newcommand{\Ext}{{\operatorname{Ext}}}
\newcommand{\End}{{\operatorname{End}}}
\newcommand{\END}{\operatorname{\mathcal E}\!\!nd}
\newcommand{\Image}{\operatorname{Im}}
\newcommand{\coker}{\operatorname{coker}}
\newcommand{\tot}{\operatorname{tot}}

\newcommand{\somdir}[2]{\hbox{$\mathrel
{\smash{\mathop{\mathop \bigoplus\limits_{#1}}
\limits^{#2}}}$}}
\newcommand{\tensor}[2]{\hbox{$\mathrel
{\smash{\mathop{\mathop \bigotimes\limits_{#1}}
^{#2}}}$}}
\newcommand{\symm}[2]{\hbox{$\mathrel
{\smash{\mathop{\mathop \bigodot\limits_{#1}}
^{#2}}}$}}
\newcommand{\external}[2]{\hbox{$\mathrel
{\smash{\mathop{\mathop \bigwedge\limits_{#1}}
^{\!#2}}}$}}

\renewcommand{\Hat}[1]{\widehat{#1}}
\newcommand{\dual}{^{\vee}}
\newcommand{\desude}[2]{\dfrac{\de #1}{\de #2}}

\newcommand{\A}{\mathbb{A}}
\newcommand{\N}{\mathbb{N}}
\newcommand{\R}{\mathbb{R}}
\newcommand{\Z}{\mathbb{Z}}
\renewcommand{\L}{\mathbb{L}}
\newcommand{\proj}{\mathbb{P}}
\newcommand{\K}{\mathbb{K}\,}
\newcommand{\bi}{\boldsymbol{i}}
\newcommand{\bl}{\boldsymbol{l}}
\renewcommand{\H}{\mathbb{H}}

\newcommand{\MC}{\operatorname{MC}}
\newcommand{\TW}{\operatorname{TW}}

\newcommand{\contr}{{\mspace{1mu}\lrcorner\mspace{1.5mu}}}

\title{Semiregularity and obstructions of complete intersections}
\date{November 9, 2012}

\author{Donatella Iacono}
\address{\newline  Universit\`a degli Studi di Bari,
\newline Dipartimento di Matematica,
\hfill\newline Via E. Orabona 4,
I-70125 Bari, Italy.}
\email{iacono@dm.uniba.it}
\urladdr{\href{http://www.dm.uniba.it/~iacono/}{www.dm.uniba.it/~iacono/}}


\author{Marco Manetti}
\address{\newline
Universit\`a degli studi di Roma ``La Sapienza'',\hfill\newline
Dipartimento di Matematica \lq\lq Guido
Castelnuovo\rq\rq,\hfill\newline
P.le Aldo Moro 5,
I-00185 Roma, Italy.}
\email{manetti@mat.uniroma1.it}
\urladdr{\href{http://www.mat.uniroma1.it/people/manetti/}{www.mat.uniroma1.it/people/manetti/}}

\maketitle

\begin{abstract}

We prove that, on a  smooth projective variety over an algebraically closed field of characteristic 0, the  semiregularity map annihilates every obstruction to embedded deformations of a local complete intersection subvariety  with extendable normal bundle. The proof is based on the theory of $L_{\infty}$-algebras and Tamarkin-Tsigan calculus on the de Rham complex of  DG-schemes.

\end{abstract}

\section*{Introduction}

Let $X$ be a smooth algebraic variety, over an algebraically closed field $\K$ of
characteristic 0, and let $Z\subset X$ be
a locally complete intersection closed subvariety of codimension $p$.
Following \cite{bloch}, the \emph{semiregularity map} $\pi\colon H^1(Z,N_{Z|X})\to
 H^{p+1}(X,\Omega^{p-1}_X)$, where $N_{Z|X}$
is the normal bundle of $Z$ in $X$, can be conveniently described by using local cohomology.
In fact, since $Z$ is a locally complete intersection, it is well defined the \emph{canonical}
 cycle class $\{Z\}'\in\Gamma(X,\sH_Z^p(\Omega_X^{p}))$ \cite[p. 59]{bloch} and the
contraction with it gives a morphism of sheaves
\[ N_{Z|X}\xrightarrow{\contr \{Z\}'}\sH_Z^p(\Omega_X^{p-1}).\]
Passing to cohomology, we get a map
\[ H^1(Z,N_{Z|X})\xrightarrow{\contr \{Z\}'}H^1(Z,\sH_Z^p(\Omega_X^{p-1}))=
H^{p+1}_Z(X,\Omega_X^{p-1}),\]
where the last equality follows from the spectral sequence of local cohomology.
Then, the semiregularity map is obtained by taking the composition  with
the natural map $H^{p+1}_Z(X,\Omega_X^{p-1})\to H^{p+1}(X,\Omega_X^{p-1})$.

In \cite{bloch}, using Hodge theory and de Rham cohomology, S. Bloch proved
that if $X$ is projective, then the semiregularity map annihilates certain obstructions to
embedded deformations of $Z$ in $X$. These obstructions contain in particular
the curvilinear ones
and, therefore, if the semiregularity map is injective,
then the Hilbert scheme of subschemes of $X$ is smooth at $Z$.

Unfortunately, Bloch's argument is not sufficient to ensure that the semiregularity map
 annihilates every obstruction to deformations. We have two main reasons to extend Bloch theorem
 to every obstruction: the first is for testing the power of \emph{derived deformation theory} in a
problem where classical deformation theory has failed for almost 40 years. This new approach  already
worked when $Z$ is a smooth submanifold,
see \cite{ManettiSemireg,iaconoUMI,IaconoSemireg} and Remark~\ref{rem.locallytrivialdeformations},
and the solution of this particular
case has given a deep insight  about the most
appropriate formulation and more useful tools of derived deformation theory.
The second reason is related with the theory of reduced Gromov-Witten invariants. Indeed, to
 define the GW invariants, one needs the virtual fundamental class, defined through an
obstruction theory. Whenever the obstruction theory is not carefully chosen, then the
virtual fundamental class is zero and the  standard GW theory is trivial.
A way to overcome this problem, and perform a non trivial Gromov Witten theory
is by using a reduced obstruction theory, obtained by considering the kernel of a suitable map
annihilating obstructions \cite{M-P-T, KoThom}.

The philosophy of derived deformation theory may be summarized in the following way
(see e.g. \cite{EDF}):
 over a  field of characteristic 0, every deformation problem is the classical
truncation of an extended
deformation problem, which is controlled by a differential graded Lie algebra via Maurer-Cartan equation
and gauge equivalence. This differential graded Lie algebra is defined up to quasi-isomorphism
and its first cohomology group
is equal to the Zariski tangent space
of the local moduli space. A morphism of deformation theories is essentially a morphism in the derived category
of differential graded Lie algebras
(with quasi-isomorphisms as weak equivalences); the induced morphism in
cohomology gives, in degrees 1 and 2, the tangent and obstruction map, respectively.

Clearly, a morphism from a deformation theory into an unobstructed deformation theory
provides an obstruction map annihilating every obstruction. Using this basic  principle,
we are able to prove that the semiregularity map annihilates
every obstruction under the following additional assumption:

\textbf{Set-up:}  \emph{$Z$ is closed of codimension $p$ in $X$ and there exists a
Zariski open subset $U\subset X$ and a vector bundle $E\to U$ of rank $p$ such
 that $Z\subset U$ and $Z$ is the zero locus of a section $f\in \Gamma(U,E)$.}

This is obviously satisfied for complete intersections of hypersurfaces,
while for $Z$ of codimension 2 we refer to \cite{OSS} for a discussion
about the validity of the above set-up.

Our main results, proved entirely with deformation theory techniques, are summarized in
the next theorem, where $\H^*$ denotes hypercohomology groups.

\begin{theorems} Let $X$ be a smooth algebraic variety, over an algebraically closed field
 of characteristic 0, and $Z\subset X$  a closed subvariety of codimension $p$ as in
the above set-up. Then, the composition of the semiregularity map and the truncation
\[ H^1(Z,N_{Z|X})\mapor{\pi} H^{p+1}(X,\Omega^{p-1}_X)\mapor{\tau}
\H^{2p}(X,\Omega^0_X\mapor{d}\cdots\mapor{d}\Omega^{p-1}_X)\]
annihilates every obstruction to infinitesimal embedded deformations of $Z$ in $X$.
\end{theorems}

Clearly, if the truncation map $H^{p+1}(X,\Omega^{p-1}_X)\mapor{\tau}
\H^{2p}(X,\Omega^0_X\mapor{d}\cdots\mapor{d}\Omega^{p-1}_X)$
is injective then
 the semiregularity map annihilates every obstruction too.
A sufficient condition for the injectivity of $\tau$ is the  degeneration at level $E_1$
of the Hodge-de Rham spectral sequence; in particular, this is true whenever $X$ is
smooth proper over a field of characteristic 0 \cite{Griffone,DI}.

\begin{maincor}
In the same assumption of the main theorem, if  the Hodge-de Rham spectral sequence of $X$ degenerates at level $E_1$, then
the semiregularity map
\[ H^1(Z,N_{Z|X})\mapor{\pi} H^{p+1}(X,\Omega^{p-1}_X)\]
annihilates every obstruction to infinitesimal embedded deformations of $Z$ in $X$.
\end{maincor}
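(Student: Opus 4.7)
The plan is to deduce the corollary directly from the Main Theorem by showing that under the $E_1$-degeneration hypothesis the truncation map
\[
\tau\colon H^{p+1}(X,\Omega^{p-1}_X)\longrightarrow\H^{2p}\bigl(X,\Omega^0_X\mapor{d}\cdots\mapor{d}\Omega^{p-1}_X\bigr)
\]
is injective. Since the Main Theorem already asserts that $\tau\circ\pi$ annihilates every obstruction, injectivity of $\tau$ upgrades this at once to the analogous statement for $\pi$ alone, which is exactly the corollary.

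To establish the injectivity, I would analyse the stupid-filtration spectral sequence of the truncated complex $C^\bullet:=\sigma_{\leq p-1}\Omega^\bullet_X$. Since $C^\bullet$ is concentrated in degrees $\leq p-1$, the filtration step $F^p\H^{2p}(X,C^\bullet)$ vanishes, so $E_\infty^{p-1,p+1}=F^{p-1}\H^{2p}(X,C^\bullet)$ injects into $\H^{2p}(X,C^\bullet)$. The map $\tau$ then factors as the edge projection $H^{p+1}(X,\Omega^{p-1}_X)=E_1^{p-1,p+1}\twoheadrightarrow E_\infty^{p-1,p+1}$ followed by this inclusion. Hence $\tau$ is injective precisely when every differential $d_r$ landing at the position $(p-1,p+1)$ vanishes; the outgoing differentials vanish for free because $C^\bullet$ has nothing in degrees $\geq p$.

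The remaining step is a comparison with the full Hodge-de Rham spectral sequence. The inclusion of filtered complexes $C^\bullet\hookrightarrow\Omega^\bullet_X$ induces a morphism of spectral sequences which is an isomorphism on $E_1^{i,j}$ for $i\leq p-1$, and a short induction on $r$ shows that it remains an isomorphism on $E_r^{i,j}$ for $i\leq p-1$ and every $r\geq 1$; the inductive step uses that at the edge $i=p-1$ the outgoing differentials of $C^\bullet$ vanish trivially, while those of $\Omega^\bullet_X$ vanish by the degeneration hypothesis. Consequently, every incoming differential $d_r\colon E_r^{p-1-r,p+r}(C)\to E_r^{p-1,p+1}(C)$ coincides with its counterpart for $\Omega^\bullet_X$ and therefore vanishes, yielding the injectivity of $\tau$. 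I expect the main obstacle to be precisely this inductive identification of the two spectral sequences, where one must keep careful track of how differentials crossing the truncation edge $i=p-1$ behave; the rest of the argument is essentially formal.
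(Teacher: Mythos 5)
Your proposal is correct and takes the same route as the paper: the corollary is deduced from the Main Theorem by observing that $E_1$-degeneration of the Hodge-de Rham spectral sequence forces the truncation map $\tau$ to be injective (a fact the paper asserts as standard, citing degeneration results, whereas you supply the spectral-sequence comparison proving it). One slip to repair in your write-up: the brutal truncation $C^\bullet=\sigma_{\le p-1}\Omega^\bullet_X$ is a \emph{quotient} of $\Omega^\bullet_X$, not a subcomplex --- the degreewise inclusion $C^\bullet\to\Omega^\bullet_X$ is not a chain map, as it fails to commute with $d$ in degree $p-1$ --- so the comparison morphism of spectral sequences must go in the direction $E_r^{i,j}(\Omega^\bullet_X)\to E_r^{i,j}(C^\bullet)$, induced by the filtered projection $\Omega^\bullet_X\twoheadrightarrow C^\bullet$; with the arrows so reversed, your induction (isomorphism on all columns $i\le p-1$ for every $r$, using degeneration to kill the outgoing differentials of $\Omega^\bullet_X$ crossing the truncation edge) goes through verbatim and gives the vanishing of every differential landing at position $(p-1,p+1)$.
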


The set-up assumption is purely technical and there is no reason for the failure of the
theorem for general local complete intersection subvarieties.

The two underlying  ideas used
in the proof of the above theorem are:
\begin{enumerate}

\item to give a purely algebraic proof for smooth submanifolds using the ideas of
\cite{algebraicBTT} and to extend it to the case of DG-schemes, considered in the sense of I. Ciocan-Fontanine and M. Kapranov \cite{KAPR,ciokap};

\item to replace the embedding $Z\subset X$ with  a quasi-isomorphic embedding of smooth DG-schemes.
\end{enumerate}

For the second idea it is necessary to assume the existence of the bundle $E$ and without this assumption the approach of DG-schemes seems insufficient. A possible and natural way to overcome this difficulty is to investigate more deeply the problem in the framework of derived algebraic geometry. After the appearence of the first version of this paper an interesting contribution in this direction has been performed by Pridham \cite{Pri}.

The main theorem, as well as its proof, suggests that, from the point of view of deformation
theory, the semiregularity map $\pi$ is not the correct object to study and should be replaced by  its composition with $\tau$;
this partially explains the past difficulties to relate deformations and
semiregularity and to describe it as a component of the differential  of a morphism
of deformation theories.
The vector spaces   $\H^{2p-1}(X,\Omega^0_X\mapor{d}\cdots\mapor{d}\Omega^{p-1}_X)$
and $\H^{2p}(X,\Omega^0_X\mapor{d}\cdots\mapor{d}\Omega^{p-1}_X)$ may be interpreted
as the tangent and obstruction spaces of the $p$-th intermediate Jacobian
of $X$,  respectively,  and $\tau\pi$ as a component of the differential of the (extended)
infinitesimal  Abel-Jacobi map \cite{bib buchFlenner,Pri,FMAJ}.

The assumption for the base field $\K$ to be of characteristic 0 is essential,
both for the method of the proofs and the validity of the main results.
For simplicity and for giving precise references (especially to \cite{Ar,bloch,Sernesi})  we also assume  that $\K$ is algebraically closed, although this assumption is not strictly necessary.

\medskip
We wish to thank the referee for useful comments and for suggestions improving the presentation of the paper.

\medskip

\subsection{Content of the paper}

The first 3 sections contain a list of algebraic preliminaries: we recall the notion and the
main properties of semifree modules, homotopy fibers of morphisms of differential graded
Lie algebras and the  semicosimplicial Thom-Whitney-Sullivan construction.
In Section 4, we reinterpret some results of Hinich, Fiorenza, Iacono and Martinengo
\cite{hinich,FIM} in terms of descent theory of reduced Deligne groupoids.
In Section 5, we describe two differential graded Lie algebras controlling the
embedded deformations of $Z$ as in the above set-up: they can be seen as spaces
of derived sections of some sheaves of derivation of a DG-scheme.
In Sections 6 and 7, we shall prove that $L_{\infty}$ morphisms, Tamarkin-Tsigan
calculus and Cartan homotopies commutes with Thom-Whitney-Sullivan construction, as well
as the $L_{\infty}$ morphism associated with a Cartan homotopy. In Section 8, we prove that
Tamarkin-Tsigan calculus holds also for the algebraic de Rham complex of a DG-scheme, while, in
Section 9, we prove some base change results about local cohomology of quasi-coherent
DG-sheaves over DG-schemes.
Finally, Sections 10 and 11 are devoted to the proof of the main theorem,
using all the result of the previous sections.

\medskip

\subsection{Notation}

Throughout the paper, we work over an algebraically closed field $\K$ of characteristic zero.
All vector spaces, linear maps, tensor products etc. are intended over $\K$, while every graded
object is considered graded over $\Z$.

A DG-vector space  $V$ is the data of a   $\Z$-graded  vector space,
 $V=\bigoplus_{n\in\Z} V^n$ together with a differential $\delta \colon V \to V $
of degree $+1$. For every homogeneous element $v \in V^i$, we denote by $\overline{v}=i$ its
degree.
As usual, we  use the standard notation for cocycles, coboundaries
and cohomology groups: $Z(V)=\ker \delta, B(V)=\Image \delta$ and $H(V)=Z(V)/B(V)$, respectively.
For any integer $n$ and any DG-vector space  $(V, \delta)$, we define the shifted DG-vector space
 $(V[n],\delta_{V[n]})$, where
$V[n]^i=V^{i+n}$ and $\delta_{V[n]}=(-1)^n\delta$.\par
Given two  DG vector spaces $V$ and $W$, we denote by $\Hom^n_{\K}(V,W)$  the space
 of $\K$-linear map $f:V \to W$, such that $f(V^i) \subset W^{i+n}$, for every $i \in \Z$.
Then $\Hom^*_{\K}(V,W)=\oplus_{n\in\Z}\Hom^n_{\K}(V,W)$ has a natural structure of DG-vector
space with differential $\delta(f)=\delta_W\circ f-(-1)^{\bar{f}}f\circ \delta_V$.

For a given graded  vector space $V$, we will use both symbols $\Sym^n(V)$ and $\bigodot^n V$
for denoting its graded symmetric $n$-th power.
The direct sum of all symmetric powers carries a natural structure of graded algebra and also
a natural structure of graded coalgebra.
For the clarity of exposition, we will adopt the following convention:
\begin{enumerate}

\item $\Sym^*(V)=\bigoplus_{n\ge 0}\Sym^n(V)$ is the graded symmetric algebra generated by
$V$;

\item $\bigodot^*V=\bigoplus_{n\ge 0}\bigodot^nV$ is the graded symmetric coalgebra
generated by $V$;

\item $\overline{\bigodot}^*V=\bigoplus_{n\ge 1}\bigodot^nV$ is the reduced graded
 symmetric coalgebra generated by $V$.
\end{enumerate}

In this paper, we use several kind of cohomology groups/sheaves. Unless otherwise specified,
 we shall denote by:
\begin{itemize}

\item $H^n(X,\sF)$  the $n$-th cohomology group of a sheaf $\sF$ on $X$;

\item $\H^n(X,\sF^*)$  the $n$-th hypercohomology group of a complex of sheaves $\sF^*$;

\item $\sH^n_Z(X,\sF)$ the $n$-th cohomology sheaf of a sheaf $\sF$,
 with support in $Z\subset X$ \cite{GrothendieckLC};

\item $\H^n_Z(X,\sF^*)$  the $n$-th hypercohomology group with support in $Z\subset X$  of a complex of sheaves $\sF^*$;

\item $\sH^n(\sF^*)$ the $n$-th cohomology sheaf of a complex of sheaves $\sF^*$;
\end{itemize}

\bigskip
\section{Review of semifree modules over DG-rings}

By a DG-ring we mean a unitary graded commutative ring
$A=\bigoplus_{n\in\Z} A^n$ endowed with a differential $\delta\colon A\to  A$
of degree $+1$. Graded commutativity means that
$ab=(-1)^{\bar{a}\;\bar{b}}ba$ and the graded Leibniz rule is $\delta(ab)=(\delta a)b+(-1)^{\bar{a}}a(\delta b)$.
Notice that
\[\cdots\mapor{\delta}A^{i-2}\mapor{\delta}A^{i-1}\mapor{\delta}A^i\mapor{\delta}\cdots\]
is a complex of $Z^0(A)$-modules; in particular,  every cohomology group $H^i(A)$ is a
$Z^0(A)$-module. A DG-ring which is also a DG-vector space will be called a  DG-algebra.

Given a DG-ring $A$, a DG-module over $A$ is  a $\Z$-graded $A$-module
$M=\bigoplus_{n\in\Z} M^n$, endowed with a differential $\delta\colon M^i\to M^{i+1}$,
satisfying the conditions:
\begin{enumerate}

\item $am=(-1)^{\bar{a}\;\bar{m}}ma$,

\item $\delta(am)=(\delta a)m+(-1)^{\bar{a}}a(\delta m)$,
\end{enumerate}
for every pair of homogeneous elements $a\in A$ and $m\in M$.
As above, the sequence
\[\cdots\mapor{\delta}M^{i-1}\mapor{\delta}M^{i}\mapor{\delta}M^{i+1}\mapor{\delta}\cdots\]
is a complex of $Z^0(A)$-modules; then, also the cohomology groups $H^i(M)$
 are $Z^0(A)$-modules.

A morphism of DG-modules is an $A$-linear map of degree $0$
commuting with differentials; a quasi-isomorphism is a morphism inducing
 an isomorphism in cohomology.

Given two DG-modules  $M$ and $N$ over  $A$, their tensor product $M \otimes_A N$ is
defined as the quotient of $M \otimes_\Z N$ by the graded submodule generated by all the elements
$ma\otimes n -m \otimes an$, for every $m \in M, n \in N$ and $a \in A$;
notice that, the degree of $n \otimes m$ is $\overline{n}+ \overline{m}$. 
Let $(M, \delta_M)$ and $(N, \delta_N)$ be two DG-modules. We define the graded $\Hom$ as the graded
vector space $\Hom_A^*(M,N)=\oplus_{n \in \Z}\Hom_A^n(M,N)$, where
\[
 \Hom_A^n(M,N)=\{\phi \colon M\to N\
 \mid \phi(M^i)\subset N^{i+n},\; \phi(ma)=\phi(m)a,\; i\in\Z,\; m \in M,\; a \in A\}.
\]
Note that $\Hom_A^*(M,N)$ is a DG-module, with left multiplication $(a\phi)(m)=a \phi(m)$,
for all  $a \in A$ and $m \in M$, and differential $\delta':  \Hom_A^n(M,N) \to   \Hom_A^{n+1}(M,N)
$, given by $\delta' (\phi)= \delta_N \circ \phi-(-1)^n \phi \circ \delta_M$
\cite[Chapter IV.3]{manRENDICONTi}.

In order to simplify the terminology, from now on the term  $A$-modules will mean
a DG-module over a DG-ring $A$.

A homotopy between two morphisms $f,g\colon M\to N$ of $A$-modules
is a morphism  $h\in \Hom_A^{-1}(M,N)$, such that
$\delta_N\circ h+h\circ \delta_M=f-g$.
Homotopic morphisms induce the same morphism in cohomology.

\begin{definition}\label{def.semifree} Let $A$ be a DG-ring,
an $A$-module $F$  is called \emph{semifree} if
$F=\oplus_{i\in I}Am_{i}$, $\bar{m_{i}}\in\Z$ and there exists a
filtration $\emptyset=I(0)\subset I(1)\subset\cdots\subset
I(n)\subset\cdots$ such that
\[I=\cup_{n}I(n),\qquad i\in I(n+1)\solose \delta m_{i}\in \oplus_{i\in I(n)}Am_{i}.\]
\end{definition}

\begin{example} If
$F=\oplus_{i\in I}Am_{i}$ and the set of degrees $\{\bar{m_i}\}$ is bounded from above, then
$F$ is semifree: in fact if $t=\max \{\bar{m_i}\}$ it is sufficient to choose
$I(h)=\{i\mid \bar{m_i}> t-h\}$.
\end{example}

\begin{example}\label{ex.polynomialalgebra} Let $B=A[x_1,\ldots,x_n]$,
with every $x_i$ of  degree $\le 0$, then $B$ is a semifree as $A$-module, for every differential. In fact $B\simeq\oplus_{m\in I}Am$, where $I$ is the set of monomials in the variables $x_i$.
\end{example}

The next results are quite standard and easy to prove (see e.g. \cite{A-F}):
\begin{enumerate}

\item Every quasi-isomorphism of semifree $A$-modules is a homotopy equivalence.

\item If $F$ is semifree and $M\to N$ is a quasi-isomorphism of $A$-modules, then also the induced morphism $M\otimes_A F\to N\otimes_A F$ is a quasi-isomorphism.

\item Every $A$-module admits a semifree resolution, i.e., for every $A$-module $M$ there exists a semifree module $F$ and a surjective quasi-isomorphism $F\to M$.

\end{enumerate}

Given a morphism of DG-ring $A\to B$ and a $B$-module $M$, we define
$\operatorname{Der}_A^*(B,M)=\bigoplus_{n \in \Z}\operatorname{Der}_A^n(B,M)$, where
\[
 \Der_A^n(B,M)=\{\phi \in \Hom^n_{A}(B,M) \
 \mid \ \phi(ab)=\phi(a)b +(-1)^{n\overline{a}}a \phi(b)\}.
\]
As for $\Hom_B^*(M,N)$, there exists a structure of $B$-module on $\operatorname{Der}_A^*(B,M)$,
 with the analogous left multiplication $(a\phi)(b)=a \phi(b)$,
for all  $a,b \in B$, and analogous differential $\delta':  \Der_A^n(B,M) \to  \Der_A^{n+1}(B,M)
$, given by $\delta' (\phi)= \delta_M \circ \phi-(-1)^n \phi \circ \delta_B$.

Given a morphism of DG-rings $A \to B$, we denote by $\Omega_{B/A}$  the module of relative K\"{a}hler
differentials of $B$ over $A$  and by $d\colon B \to \Omega_{B/A}$ the universal derivation, determined by the property that for every $B$-module $M$ the composition with $d$ gives an isomorphism of $B$-modules
\[\bi\colon \Der_A^*(B,M)\mapor{\simeq} \Hom^*_B(\Omega_{B/A}, M),\qquad
\bi_\alpha(db)=\alpha(b)\quad \alpha\in \Der_A^*(B,M), b\in B.\]
The construction of $\Omega_{B/A}$ is essentially the same as in the non graded case,
the differential on $\Omega_{B/A}$ commutes with the differential of $B$ and
$d\in \Der^0_A(B,\Omega_{B/A})$.

For later use, we point out that in the situation of Example~\ref{ex.polynomialalgebra} we have
$\Omega_{B/A}=\oplus B  dx_i$, $\delta(dx_i)=d(\delta x_i)$ and then $\Omega_{B/A}$ is a semifree $B$-module.

\bigskip
\section{Homotopy fiber of a morphism of differential graded Lie algebras}

A \emph{differential graded Lie algebra}  is
the data of a differential graded vector space $(L,d)$ together
with a  bilinear map $[-,-]\colon L\times L\to L$ (called bracket)
of degree 0 such that the following conditions are satisfied:
\begin{enumerate}

\item (graded  skewsymmetry) $[a,b]=-(-1)^{\bar{a}\;\bar{b}}[b,a]$.

\item (graded Jacobi identity) $
[a,[b,c]]=[[a,b],c]+(-1)^{\bar{a}\;\bar{b}}[b,[a,c]]$.

\item (graded Leibniz rule) $d[a,b]=[da,b]+(-1)^{\bar{a}}[a,db]$.
\end{enumerate}

In particular,
the Leibniz rule implies that the bracket of a DGLA
$L$ induces a structure of graded Lie algebra on its cohomology
$H^*(L)=\oplus_iH^i(L)$. Moreover, a DGLA is \emph{abelian} if its bracket is trivial.

A \emph{morphism} of differential graded Lie algebras $\chi\colon L \to M$ is  a linear map
that preserves degrees and commutes with brackets and
differentials.

A \emph{quasi-isomorphism} of DGLAs is a morphism
that induces an isomorphism in cohomology. Two DGLAs $L$ and $M$ are said to be
\emph{quasi-isomorphic}, or \emph{homotopy equivalent}, if they are equivalent under the
equivalence relation generated by: $L\sim M$ if there exists
a quasi-isomorphism $\chi\colon L\to M$.
A differential graded Lie algebra is  \emph{homotopy abelian} if it is quasi-isomorphic to an abelian  DGLA.

The \emph{homotopy fiber} of a morphism of DGLAs $\chi\colon L\to M$ is the differential graded Lie algebra
\[TW(\chi):=\{(l, m(t,dt)) \in L \times M[t,dt] \ \mid \  m(0,0)=0, \, m(1,0)=\chi(l). \}
\]

\begin{lemma}\label{lem.criterion}
Let $\chi\colon L\to M$ be a morphism of differential graded Lie algebras such that: \begin{enumerate}

\item $\chi\colon L\to M$ is injective,

\item $\chi\colon H^*(L)\to H^*(M)$ is injective.
\end{enumerate}
Then, the homotopy fiber
is homotopy abelian.
\end{lemma}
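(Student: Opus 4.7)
The plan is to exhibit $TW(\chi)$ as quasi-isomorphic, in the $L_\infty$ sense, to an explicitly constructed abelian DGLA. Since $\chi$ is injective I would replace $L$ with $\chi(L)\subset M$ and assume $\chi$ is an inclusion of sub-DGLAs. Let $N:=M/L$ be the quotient complex (no bracket), so that $N[-1]$ with trivial bracket is an abelian DGLA. The injectivity of $\chi^*$ on cohomology, applied to the short exact sequence $0\to L\to M\to N\to 0$, forces the connecting homomorphisms to vanish and yields short exact sequences $0\to H^n(L)\to H^n(M)\to H^n(N)\to 0$ for every $n$.

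Combining this with the cohomology long exact sequence attached to the homotopy fiber,
\[\cdots\to H^{n-1}(M)\to H^n(TW(\chi))\to H^n(L)\xrightarrow{\chi^*}H^n(M)\to\cdots,\]
gives canonical identifications $H^n(TW(\chi))\cong H^{n-1}(N)$, which agree with the cohomology of $N[-1]$. A direct calculation, based on the fact that every class in $H^*(TW(\chi))$ admits a representative $(0,m)$ in the loop space $\Omega M=\{(0,m):m(0,0)=m(1,0)=0\}$ (this uses the strict injectivity of $\chi$ to force $l=0$), already shows that the induced bracket on $H^*(TW(\chi))$ vanishes: given two loop cocycles $m_i=a_i(t)+b_i(t)\,dt$ with primitives $B_i(t)=\int_0^t b_i(s)\,ds$ so that $a_i=\pm\delta_M B_i$, the element $m'=\pm[B_1,a_2]+[B_1,b_2]\,dt$ satisfies $m'(0,0)=m'(1,0)=0$ and $\delta m'=[m_1,m_2]$.

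To upgrade these cohomological facts to a DGLA-level equivalence I would construct an $L_\infty$-morphism $\Phi_\bullet\colon TW(\chi)\rightsquigarrow N[-1]$ whose first Taylor coefficient is the integration map
\[\Phi_1(l,m(t,dt))=\int_0^1\pi(m(t,dt)),\]
with $\pi\colon M\to N$ the projection. Stokes, together with the boundary conditions $m(0,0)=0$ and $m(1,0)=\chi(l)\in\ker\pi$, shows that $\Phi_1$ is a chain map realizing the isomorphism of the previous step. The higher components $\Phi_n\colon\bigwedge^n TW(\chi)\to N[-1]$ for $n\ge2$ I would define via Chen-type iterated integrals along the simplex $\Delta^n\subset[0,1]^n$, with signs dictated by the Koszul rule.

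The hard part is the verification of the $L_\infty$-relations for the family $\{\Phi_n\}$. By iterated Stokes and the graded Jacobi identity in $M[t,dt]$, these reduce to the claim that all face contributions along $\partial\Delta^n$ vanish after applying $\pi$. Each such face corresponds to evaluating one of the loops at $t=0$ or $t=1$, so the resulting boundary terms lie a priori in $\chi(L)=L=\ker\pi$ and hence vanish strictly in $N$; it is at this point that the strict injectivity of $\chi$, and not merely the injectivity of $\chi^*$, is essential. Once the $L_\infty$-relations are in place, the fact that $\Phi_1$ is a quasi-isomorphism promotes $\Phi_\bullet$ to an $L_\infty$-quasi-isomorphism between $TW(\chi)$ and the abelian DGLA $N[-1]$, proving that $TW(\chi)$ is homotopy abelian.
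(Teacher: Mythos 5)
Your cohomological preliminaries are fine: the identification $H^n(TW(\chi))\cong H^{n-1}(N)$ and the vanishing of the induced bracket via loop representatives both hold (note that the latter already uses hypothesis (2), through the exactness argument that lets you kill the $L$-component of a cocycle). But the heart of your argument --- the verification of the $L_\infty$-relations for the iterated-integral components $\Phi_n$ --- contains a genuine error, and it sits exactly where the difficulty of the lemma lies. You claim that all boundary (face) contributions produced by Stokes vanish after applying $\pi$, because faces correspond to evaluations at $t=0$ or $t=1$, which land in $\chi(L)=\ker\pi$. This is not true: the face terms are not evaluations alone, they are \emph{brackets} of such evaluations with arbitrary elements of $M$. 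Concretely, already for $n=2$, write $m_i=u_i(t)+v_i(t)\,dt$ and take for $\Phi_2$ the natural iterated integral
\[
\Phi_2(x_1,x_2)=\pm\int_0^1 \pi\bigl([B_1(t),v_2(t)]\bigr)\,dt\ \pm\ \int_0^1 \pi\bigl([B_2(t),v_1(t)]\bigr)\,dt ,
\qquad B_i(t)=\int_0^t v_i(s)\,ds .
\]
Checking the relation $\Phi_1([x_1,x_2])=\delta_N\Phi_2(x_1,x_2)\pm\Phi_2(dx_1,x_2)\pm\Phi_2(x_1,dx_2)$ forces an integration by parts of $\int_0^1[B_2,u_1']\,dt$, which produces the boundary term $\pi\bigl([B_2(1),u_1(1)]\bigr)=\pi\bigl([B_2(1),\chi(l_1)]\bigr)$, and symmetrically $\pi\bigl([B_1(1),\chi(l_2)]\bigr)$. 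Here $B_2(1)\in M$ is arbitrary, and since $\chi(L)$ is only a \emph{subalgebra} of $M$, not an ideal, $[M,\chi(L)]\not\subset\chi(L)=\ker\pi$: these terms neither vanish nor cancel each other. Your argument implicitly treats $\ker\pi$ as a Lie ideal, which is precisely what is not available.

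A symptom of the same problem is that your proposed verification never invokes hypothesis (2) --- you even assert that strict injectivity alone suffices for the $L_\infty$-relations. It does not: even the existence of $\Phi_2$ (the null-homotopy of $\Phi_1\circ q_2$) rests on the loop-representative argument, hence on $\ker(H^*(L)\to H^*(M))=0$, and for $n\ge 3$ one must kill the transferred Massey-type brackets, which is the real content of the statement and cannot be waved through. Compare with the paper's proof, which avoids this direction entirely: hypotheses (1) and (2) give a splitting $M=\chi(L)\oplus V$ as complexes, and one maps the abelian DGLA $V[-1]$ \emph{into} the homotopy fiber by $v\mapsto(0,dt\,v)$. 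This is a strict DGLA morphism for free, because $dt\wedge dt=0$ kills all brackets in the image, and it is a quasi-isomorphism by the mapping-cone computation. Going into $TW(\chi)$ from a complement makes bracket compatibility trivial, whereas going out of $TW(\chi)$ onto the quotient, as you do, requires the full tower of higher homotopies whose existence you have not established.
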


\begin{proof} This result has been proved in
\cite{algebraicBTT} using $L_{\infty}$-algebras; here we sketch a more elementary proof.
The same argument used in \cite[Proposition 3.4]{algebraicBTT} shows that that
there exists a direct sum decomposition $M=\chi(L)\oplus V$ as a direct sum of differential graded vector space and, therefore, the mapping cone $L\oplus M[-1]$ of $\chi$ is quasi-isomorphic to $V[-1]$. Consider $V[-1]$ as an abelian differential graded Lie algebras and consider the morphism of DGLAs
\[ f\colon V[-1]\to  TW(\chi),\qquad  f(v)=(0, dt\, v).\]
Since the map
\[ TW(\chi)\to L\oplus M[-1],\qquad (l, p(t)m_1+q(t)dt m_2)\mapsto (l, \int_0^1q(t)dt m_2),\]
is a quasi-isomorphism of complexes, it follows that $f$ is a quasi-isomorphism of DGLAs.
\end{proof}

\begin{remark}\label{rem.quasiisoTWcono} Assume that $\chi\colon L\to M$ is an injective morphism of DGLAs, then
its cokernel $M/\chi(L)$ is a differential graded vector space and the map
\[ TW(\chi)\to (M/\chi(L))[-1],\qquad (l,p(t)m_0+q(t)dt m_1)\mapsto \left(\int_0^1q(t)dt\right) m_1 \pmod{\chi(L)},\]
is a surjective quasi-isomorphism.
\end{remark}


\begin{example}\label{ex.exampleclosedclass}
Let $W$ be a differential graded vector space and  $\gamma\in W$  a cocycle with non trivial cohomology class. Then, the inclusion
\[
\chi\colon\{f\in\Hom^*_{\K}(W,W)\mid f(\gamma)=0\}\to \Hom^*_{\K}(W,W)
\]
satisfies the hypothesis of Lemma~\ref{lem.criterion}. Therefore,
the DGLA $TW(\chi)$ is homotopy abelian. In fact, the morphism of complexes $\K\gamma\to W$ is injective in cohomology and then
by K\"{u}nneth formula the map $\Hom^*_{\K}(W,W)\to \Hom^*_{\K}(\K\gamma,W)$ is surjective in cohomology.
\end{example}

\bigskip
\section{Semicosimplicial Thom-Whitney-Sullivan construction}
\label{sec.scla}

\bigskip

Let $\mathbf{\Delta}_{\operatorname{mon}}$ be  the category whose
objects are the finite ordinal sets $[n]\!=\!\{0,1,\ldots,n\}$,
$n=0,1,\ldots$, and whose morphisms are order-preserving injective
maps among them. Every morphism in
$\mathbf{\Delta}_{\operatorname{mon}}$, different from the
identity, is a finite  composition of \emph{coface} morphisms:
\[
\partial_{k}\colon [i-1]\to [i],
\qquad \partial_{k}(p)=\begin{cases}p&\text{ if }p<k\\
p+1&\text{ if }k\le p\end{cases},\qquad k=0,\dots,i.
\]
The relations about compositions of them are generated by
\[ \partial_{l}\partial_{k}=
\partial_{k+1}\partial_{l}\, ,\qquad\text{for every }l\leq k.\]

According to \cite{EZ,weibel}, a \emph{semicosimplicial} object in
a category $\mathbf{C}$ is a  covariant functor $A^\Delta\colon
\mathbf{\Delta}_{\operatorname{mon}}\to \mathbf{C}$. Equivalently,
a semicosimplicial  object $A^\Delta$ is a diagram in
$\mathbf{C}$:
 \[
\xymatrix{ {A_0}
\ar@<2pt>[r]\ar@<-2pt>[r] & { A_1}
      \ar@<4pt>[r] \ar[r] \ar@<-4pt>[r] & { A_2}
\ar@<6pt>[r] \ar@<2pt>[r] \ar@<-2pt>[r] \ar@<-6pt>[r]&
\cdots ,}
\]
where each $A_i$ is in $\mathbf{C}$, and, for each $i>0$,
there are $i+1$ morphisms
\[
\partial_{k}\colon {A}_{i-1}\to {A}_{i},
\qquad k=0,\dots,i,
\]
such that $\partial_{l}
\partial_{k}=\partial_{k+1} \partial_{l}$, for any $l\leq k$.\par

Given a  semicosimplicial differential
graded vector space
\[V^{\Delta}:\quad
\xymatrix{ {V_0}
\ar@<2pt>[r]\ar@<-2pt>[r] & { V_1}
      \ar@<4pt>[r] \ar[r] \ar@<-4pt>[r] & { V_2}
\ar@<6pt>[r] \ar@<2pt>[r] \ar@<-2pt>[r] \ar@<-6pt>[r]& \cdots ,}\]
the graded vector space $\prod_{n\ge 0}V_n[-n]$ has two
differentials
\[ d=\sum_{n}(-1)^nd_n,\qquad \text{where}\quad d_n
\text{ is the differential of } V_n,
\]
and
\[
\partial=\sum_{i}(-1)^i\partial_i,\qquad \text{where}
\quad \partial_i\text{ are the coface maps}.\]
More explicitly, if
$v\in V^i_n$, then the degree of $v$ is $i+n$ and
\[ d(v)=(-1)^nd_n(v)\in V^{i+1}_n,\qquad
\partial(v)=\partial_0(v)-\partial_1(v)+\cdots+(-1)^{n+1}
\partial_{n+1}(v)\in V_{n+1}^i.\]
Since $d\partial+\partial d=0$, we define $\tot(V^{\Delta})$ as
the  graded vector space  $\prod_{n\ge 0}V_n[-n]$, endowed
with the differential $d+\partial$.

Let  $V^{\Delta}$ be a   semicosimplicial differential graded
vector space and
 $(A_{PL})_n$  the differential graded commutative algebra
of polynomial differential forms on the standard $n$-simplex
$\{(t_0,\ldots,t_n)\in \K^{n+1}\mid \sum t_i=1\}$ \cite{FHT}:
\[ (A_{PL})_n=\frac{\K[t_0,\ldots,t_n,dt_0,\ldots,dt_n]}
{(1-\sum t_i,\sum dt_i)}.\] For every $n,m$ the tensor product
$(A_{PL})_m \otimes V_n $ is a differential graded vector space and
then also $\prod_n (A_{PL})_n \otimes V_n$ is a differential
graded vector space.

Denoting by
\[ \delta^{k}\colon (A_{PL})_n\to
(A_{PL})_{n-1},\quad
\delta^{k}(t_i)=\begin{cases}t_i&\text{ if }0\le i<k\\
0&\text{ if }i=k\\
t_{i-1}&\text{ if }k<i\end{cases},\qquad k=0,\dots,n,
\]
the face maps, for every $0\le k\le n$, there are well-defined
morphisms of   differential graded vector spaces
\[
 \delta^{k} \otimes Id \colon (A_{PL})_{n} \otimes  V_n  \to (A_{PL})_{n-1} \otimes
 V_n,\]
\[Id \otimes \partial_{k} \colon
(A_{PL})_{n-1}   \otimes  V_{n-1}   \to (A_{PL})_{n-1} \otimes  V_{n}.
\]

The Thom-Whitney-Sullivan differential graded vector space of $V^\Delta$
is denoted by ${TW}(V^\Delta)\subset \prod_n(A_{PL})_n \otimes V_n$ and
is the graded subspace whose
elements are the sequences $(x_n)_{n\in\mathbb{N}}$ satisfying the
equations
\[( \delta^{k} \otimes Id)x_n=
(Id \otimes \partial_{k})x_{n-1},\; \text{ for every }\; 0\le k\le
n.
\]

In \cite{whitney}, Whitney noted  that the integration maps
\[ \int_{\Delta^n}\otimes \operatorname{Id}\colon (A_{PL})_{n}\otimes
V_n\to {\mathbb K}[-n]\otimes V_n=V_n[-n]\] give a quasi-isomorphism
of differential graded vector spaces
\[
I\colon ( TW(V^\Delta), d_{TW})\to
({\tot}(V^\Delta),d_{\tot}).
\]
Further details    can be found in
 \cite{navarro,getzler,cone,chenggetzler}.

\begin{example}\label{ex.cech semicosimplicial}
Let $\sL$ be a  sheaf of differential graded vector spaces over an algebraic variety $X$ and  $\sU=\{U_i\}$  an  open cover of $X$; assume that the set of indices $i$ is totally ordered. Then, we can define
the semicosimplicial DG vector space of  \v{C}ech  alternating cochains of $\sL$ with respect to the cover $\sU$:
\[ \sL(\sU):\quad \xymatrix{ {\prod_i\mathcal{L}(U_i)}
\ar@<2pt>[r]\ar@<-2pt>[r] & { \prod_{i<j}\mathcal{L}(U_{ij})}
      \ar@<4pt>[r] \ar[r] \ar@<-4pt>[r] &
      {\prod_{i<j<k}\mathcal{L}(U_{ijk})}
\ar@<6pt>[r] \ar@<2pt>[r] \ar@<-2pt>[r] \ar@<-6pt>[r]& \cdots}.\]
Clearly, in this case, the total complex $\tot(\sL(\sU))$ is the associated \v{C}ech complex $C^*(\sU,\sL)$.
We will denote by $TW(\sU,\sL)$ the associated Thom-Whitney complex. The integration map
$TW(\sU,\sL)\to C^*(\sU,\sL)$ is a surjective quasi-isomorphism.
If $\sL$ is a quasi-coherent DG-sheaf and every $U_i$ is affine, then the cohomology of  $TW(\sU,\sL)$ is the same of the cohomology of $\sL$.
\end{example}

\begin{example}
Let
 \[\mathfrak{g}^\Delta:\quad
\xymatrix{ {{\mathfrak g}_0} \ar@<2pt>[r]\ar@<-2pt>[r] & {
{\mathfrak g}_1}
      \ar@<4pt>[r] \ar[r] \ar@<-4pt>[r] & { {\mathfrak g}_2}
\ar@<6pt>[r] \ar@<2pt>[r] \ar@<-2pt>[r] \ar@<-6pt>[r]& \cdots ,}
\]
be a semicosimplicial differential graded Lie algebra, i.e.,
each ${\mathfrak g}_i$ is a DGLA each $\partial_{k}$ is a morphism of DGLAs.
Then, in this case too, we can apply the
Thom-Whitney construction: it is evident
$ {TW}(\mathfrak{g}^\Delta)$ has a structure of a differential graded lie algebra.
\end{example}

\begin{example}\label{ex.tot TW morfismo}
 Let $\chi:L \to M$ be a morphism of differential graded Lie algebras.
Then, we can consider the  semicosimplicial DGLA
\[
\xymatrix{  \chi^\Delta \colon \qquad L
\ar@<2pt>[r]\ar@<-2pt>[r] & M
      \ar@<4pt>[r] \ar[r] \ar@<-4pt>[r] & 0
\ar@<6pt>[r] \ar@<2pt>[r] \ar@<-2pt>[r] \ar@<-6pt>[r]& \cdots ,}
\qquad \mbox{with} \quad \partial_0= \chi \ \mbox{and} \ \partial_1=0.
\]
It turns out \cite{ManettiSemireg,cone} that
the total complex   $\tot(\chi^{\Delta})$ coincides  with the
mapping cone of $\chi$, i.e.,
\[ \tot(\chi^{\Delta})^i=L^i\oplus M^{i-1},\qquad d(l,m)=(dl,\chi(l)-dm),\]
and the Thom-Whitney-Sullivan construction
coincides  with the
homotopy fiber of $\chi$:
\[
TW(\chi^\Delta)=TW(\chi)=\{(l, m(t,dt)) \in L \times M[t,dt] \ \mid \  m(0,0)=0, \, m(1,0)=\chi(l)\}.
\]
\end{example}

Whenever $ \mathfrak{g}^\Delta$
is a  semicosimplicial differential graded Lie algebra, we have just noticed that ${TW}(\mathfrak{g}^\Delta)$ is a differential graded Lie algebra.
Moreover, every $\mathfrak{g}_i$ is a DGLA and so, in particular,  a
differential graded vector space, thus we can consider the total
complex $\tot(\mathfrak{g}^\Delta)$.
It turns out that   the complex ${\tot}(\mathfrak{g}^{\Delta})$ has no natural DGLA
structure,  even in the easy case of Example \ref{ex.tot TW morfismo} of a morphism of DGLAs \cite[Example 2.7]{algebraicBTT}.

\begin{lemma}\label{lem.naturalmap}
Let $\mathfrak{g}^\Delta$ be a semicosimplicial DGLA, $L$
a DGLA   and $\varphi\colon L \to {\mathfrak g}_0$ a morphism of DGLAs,
such that $ \partial_{0} \circ \varphi =\partial_{1} \circ \varphi$.
Then it is well defined a morphism of DGLAs $h\colon L \to TW(\mathfrak{g}^\Delta)$  giving a commutative  diagram
\[ \xymatrix{ L\ar[r]^{h\quad}\ar[rd]_{\psi}& TW(\mathfrak{g}^\Delta) \ar[d]^{I}\\
& \tot(\mathfrak{g}^\Delta),}\]
where $\psi\colon L \to \tot(\mathfrak{g}^\Delta)$ is the composition of $\varphi$ with the inclusion
$\mathfrak{g}_0\subset \tot(\mathfrak{g}^\Delta)$.
\end{lemma}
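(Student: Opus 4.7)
The plan is to define $h(x) \in TW(\mathfrak{g}^\Delta)$ as the ``locally constant'' cochain $h(x) = (1 \otimes \alpha_n(x))_{n \geq 0}$, where $\alpha_n(x) \in \mathfrak{g}_n$ is the common value of all iterated coface images of $\varphi(x) \in \mathfrak{g}_0$. The whole content of the hypothesis $\partial_0 \varphi = \partial_1 \varphi$ is precisely what forces these iterated cofaces to agree at every level; once this is established, everything else reduces to a direct verification using the identity $\int_{\Delta^n} 1 = \delta_{n,0}$.

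The first and most substantive step is to set $\alpha_n(x) := \partial_0^{\,n}\, \varphi(x)$ and to verify the equality $\partial_k \alpha_{n-1}(x) = \alpha_n(x)$ for every $0 \leq k \leq n$. I would proceed by iteratively moving $\partial_k$ to the right across the string $\partial_0^{\,n-1}$, using the cosimplicial identity $\partial_{k+1}\partial_0 = \partial_0 \partial_k$ (that is, $\partial_l \partial_k = \partial_{k+1}\partial_l$ specialised at $l = 0$). For $1 \leq k \leq n-1$ this reduction terminates in the form $\partial_0^{\,n}\varphi(x)$ using only the cosimplicial identities, while in the extremal case $k = n$ the reduction ends at $\partial_0^{\,n-1}\partial_1 \varphi(x)$, and this is the unique point where the hypothesis $\partial_1 \varphi = \partial_0 \varphi$ is actually invoked to conclude $\partial_0^{\,n-1}\partial_1 \varphi(x) = \partial_0^{\,n}\varphi(x) = \alpha_n(x)$. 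I expect this combinatorial bookkeeping to be the part requiring the most care, even though it is entirely elementary.

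With the $\alpha_n(x)$ well defined, the semicosimplicial compatibility $(\delta^k \otimes \mathrm{Id})\,h(x)_n = (\mathrm{Id} \otimes \partial_k)\,h(x)_{n-1}$ reduces, using $\delta^k(1) = 1$, to the equality $\alpha_n(x) = \partial_k \alpha_{n-1}(x)$ just established, so $h(x)$ really lies in $TW(\mathfrak{g}^\Delta)$. The map $h$ is then $\K$-linear and compatible with differentials and brackets because $\varphi$ and each $\partial_i$ are morphisms of DGLAs, the polynomial factor $1 \in (A_{PL})_n$ is a cocycle of degree $0$, and the bracket and differential on $TW(\mathfrak{g}^\Delta)$ are induced componentwise from those on $(A_{PL})_n \otimes \mathfrak{g}_n$. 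Finally, for the commutativity of the triangle I would observe that the Whitney integration $I$ sends $1 \otimes \alpha_n(x)$ to $\bigl(\int_{\Delta^n} 1\bigr)\alpha_n(x)$, which vanishes for $n \geq 1$ and equals $\alpha_0(x) = \varphi(x)$ for $n = 0$; therefore $I(h(x)) = (\varphi(x), 0, 0, \dots) \in \tot(\mathfrak{g}^\Delta)$, which is exactly $\psi(x)$.
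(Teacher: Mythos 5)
Your proposal is correct and takes essentially the same route as the paper: the paper defines $h(l)=(1\otimes \partial_0^n\varphi(l))_{n\ge 0}$, exactly your formula, and dismisses the well-definedness as ``a straightforward computation'' --- the computation being precisely the cosimplicial bookkeeping you spell out, where $\partial_k\partial_0=\partial_0\partial_{k-1}$ handles $1\le k\le n-1$ and the hypothesis $\partial_0\varphi=\partial_1\varphi$ is needed only for the extremal coface $k=n$. The commutativity of the triangle is also verified identically, via $\int_{\Delta^n}1=0$ for $n\ge 1$.
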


\begin{proof}
A straightforward computation shows that the map
$h\colon L \to TW(\mathfrak{g}^\Delta)$ defined as
\[
h(l)=(1\otimes \varphi(l),1\otimes \partial_{0} (\varphi(l)), 1\otimes \partial_{0}^2
(\varphi(l)), \ldots, 1\otimes \partial_{0}^n (\varphi(l)), \ldots),\]
is a well defined  morphism of differential graded Lie algebras.

Since $I$ contracts the polynomial differential forms in $(A_{PL})_n$ by integrating
over the standard simplex $\Delta_n$,
we have that,  $I(h(l))=\varphi (l) \in {\mathfrak g}_0^i$, for every $l \in L^{i}$.
\end{proof}

\bigskip
\section{Descent of formal reduced Deligne groupoid}

Denote by $\Set$  the category of sets (in a fixed universe) and
by $\mathbf{Grpd}$ the category of small groupoids. We shall consider $\Set$ as a full subcategory of $\mathbf{Grpd}$.
Given a small groupoid $G$ it is convenient, for our purposes,
to think to their objects and arrows  as vertices and edges of its nerve, respectively; therefore, $G_0$ will denote the set of objects and $G_1(x,y)$ the set of morphisms from $x$ to $y$.

Finally, denote by  $\Art$   the category of local Artin
$\K$-algebras with residue field $\K$. Unless otherwise specified,
for every  object   $A\in \mathbf{Art}$, we denote by
$\mathfrak{m}_A$ its maximal ideal.

Let $\mathbf{C}$ be a category with a final object $*$,
throughout this paper, by a formal object in $\mathbf{C}$, we shall mean a covariant functor
$F\colon\Art \to \mathbf{C}$ such that $F(\K)=*$. In particular, a formal   groupoid is a functor $G\colon\Art\to \mathbf{Grpd}$ such that $G(\K)=*$, and  a formal set is a functor $G\colon\Art\to \mathbf{Set}$ such that $G(\K)=*$ (these last ones are also called functors of Artin rings).

\smallskip

\begin{definition}

Let $L$ be a fixed nilpotent differential graded Lie algebra.
The Maurer-Cartan set associated with $L$ is
\[\MC(L)=\left\{x\in L^1\;\middle|\; dx+\dfrac{1}{2}[x,x]=0\right\}.
\]
The gauge action $\ast:\exp(L^0)\times
\MC_L\longrightarrow {\MC}_L$ is defined by the explicit formula
\[
e^a \ast x:=x+\sum_{n\geq 0} \frac{ [a,-]^n}{(n+1)!}([a,x]-da).
\]

Finally, the    \emph{Deligne  groupoid} associated with $L$ is the  groupoid $\operatorname{Del}(L)$ defined as follows:
\begin{enumerate}
\item   the objects are $\operatorname{Del}(L)_0 =\MC(L),$
\item the morphisms are $\operatorname{Del}(L)_1(x,y)= \{e^a\in \exp(L^0) \mid  e^a*x=y\}$,
for $x,y \in \operatorname{Del}(L)_0$.
\end{enumerate}
\end{definition}

For each $x \in \MC (L)  $ the
\emph{irrelevant stabilizer} of $x$ is defined as the subgroup:
\[
I(x)=\{e^{dh+[x,h]}|\, h \in L^{-1}\} \subset
\operatorname{Del}(L)_1(x,x).
\]
Note that, since $e^{dh+[x,h]}*x=x$, the irrelevant stabilizer $I(x)$
  is contained in the stabilizer of $x$ under the gauge action.
Moreover,  $I(x)$ is a normal subgroup of the stabilizer of $x$, since
for any $a \in L^0 $ we have (see e.g.  \cite{Kont94,ManettiSemireg,yek})
\begin{equation}\label{equ.coniugio irrilevanti}
e^a I(x)e^{-a}=I(y),  \qquad \mbox{ with } \qquad
y=e^a*x.
\end{equation}

The above formula implies also that, for every $x,y\in MC(L)$, we have
a natural isomorphism
\[  \frac{\operatorname{Del}(L)_1(x,y)}{I(x)}=\frac{\operatorname{Del}(L)_1(x,y)}{I(y)},\]
with $I(x)$ and $I(y)$ acting in the obvious way.

\begin{definition}[{\cite{Kont94,yek}}]
The  \emph{ reduced  Deligne  groupoid } associated with a nilpotent differential graded Lie algebra $L$ is the  groupoid $\overline{\operatorname{Del}}(L)$ having the same objects as $\operatorname{Del}(L)$ and as morphisms \[\overline{\operatorname{Del}}(L)_1(x,y):=
\frac{\operatorname{Del}(L)_1(x,y)}{I(x)}=\frac{\operatorname{Del}(L)_1(x,y)}{I(y)}.\]

\end{definition}

\begin{example}\label{ex.deligneforembedded}
Let $B,C$ be two $\K$-algebras, $\{x_i\}$ a set of indeterminates of
non positive degree and
$R=B[\{x_i\}]\to C$ a quasi-isomorphism of  DG-algebras and denote by $\delta\in
\Der^1_{B}(R,R)$ the differential of $R$.
Notice that as a graded algebra, $R$ is a free $B$-algebra with generators $x_i$
of degree $\le 0$. Moreover, $R^i=0$ for every $i>0$, $H^0(R,\delta)=C$ and
$H^i(R,\delta)=0$, for every $i\not=0$.
Given a local Artin $\K$-algebra $A$, let us describe the Deligne groupoid
and the irrelevant stabilizers of  the nilpotent
DGLA $L=\Der^*_B(R,R)\otimes\mathfrak{m}_A$.

There is a natural bijection between Maurer-Cartan elements and differentials
$\rho'\colon R\otimes A\to R\otimes A$, which are $B\otimes A$ linear and equal
to $\delta$ modulus $\mathfrak{m}_A$, while the set of morphisms, in the Deligne
groupoid, between $\rho$ and $\rho'$  is the set of $B\otimes A$-linear
isomorphisms $(R\otimes A,\rho)\to (R\otimes A,\rho')$, which are the identity modulus
$\mathfrak{m}_A$.

\begin{lemma}\label{lem.stabilizzatoriirrilevanti}
In the notation above, the irrelevant stabilizer $I(\rho)$ is the group of
 $B\otimes A$-linear automorphisms of the DG-algebra $(R\otimes A,\rho)$, which are
the identity modulus
$\mathfrak{m}_A$ and inducing the identity in cohomology.
\end{lemma}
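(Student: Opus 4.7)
The plan is to identify $\operatorname{Del}(L)_1(\rho,\rho)$ with the group of $B\otimes A$-linear DG-algebra automorphisms of $(R\otimes A,\rho_{\mathrm{tot}})$ reducing to the identity modulo $\mathfrak{m}_A$, where $\rho_{\mathrm{tot}}:=\delta+\rho$, and then to prove both inclusions of the claimed equality within this identification. The correspondence is $a\leftrightarrow e^a$: nilpotency of $\mathfrak{m}_A$ makes the series converge, and the explicit gauge-action formula shows that $e^a$ is a morphism in $\operatorname{Del}(L)_1(\rho,\rho)$ precisely when $[\rho_{\mathrm{tot}},a]=0$, i.e., when $a$ is a cocycle of the DGLA $(L,d_\rho)$ with $d_\rho:=[\rho_{\mathrm{tot}},-]$.

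\textbf{Direction $\subseteq$.} I would start from an arbitrary $\phi=e^a\in I(\rho)$, so that $a=dh+[\rho,h]=[\rho_{\mathrm{tot}},h]$ for some $h\in L^{-1}$. The key computation is that, with $\overline{h}=-1$, the graded commutator gives $a=\rho_{\mathrm{tot}}\circ h+h\circ\rho_{\mathrm{tot}}$, so $h$ is a chain null-homotopy of $a$ as an endomorphism of the complex $(R\otimes A,\rho_{\mathrm{tot}})$; in particular $a$ induces the zero map on cohomology. Since $\rho_{\mathrm{tot}}^2=0$, the graded Jacobi identity also gives $[\rho_{\mathrm{tot}},a]=0$, so $a$ commutes with $\rho_{\mathrm{tot}}$ and every iterate $a^n$ continues to send cocycles to coboundaries. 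Expanding $e^a$ as a power series then shows that $\phi$ acts as the identity on cohomology.

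\textbf{Direction $\supseteq$, and main obstacle.} For the other inclusion, I would begin with a DG-algebra automorphism $\phi$ that is the identity modulo $\mathfrak{m}_A$ and acts trivially on cohomology, and write $\phi=e^a$ with $a=\log\phi\in L^0$ (a finite sum by nilpotency). Commutation of $\phi$ with $\rho_{\mathrm{tot}}$ translates into $a$ being a $d_\rho$-cocycle, and the hypothesis on $\phi$ translates, again by nilpotency, into the vanishing of the derivation induced by $a$ on $H^0(R\otimes A,\rho_{\mathrm{tot}})$. The crux is then to produce $h\in L^{-1}$ with $a=[\rho_{\mathrm{tot}},h]$, which amounts to showing the injectivity of the natural map
\[
H^0(L,d_\rho)\longrightarrow \Der_{B\otimes A}\bigl(H^0(R\otimes A,\rho_{\mathrm{tot}}),\,H^0(R\otimes A,\rho_{\mathrm{tot}})\bigr).
\]
This is the step where the semifree hypothesis is essential: because $\Omega_{R/B}$ is a semifree $R$-module, the homotopy invariance of $\Hom$ recalled in Section~1 yields, in the unperturbed case $\rho=0$, a quasi-isomorphism $\Der^*_B(R,R)=\Hom^*_R(\Omega_{R/B},R)\simeq\Hom^*_R(\Omega_{R/B},C)$ whose $H^0$ is precisely $\Der_B(C,C)$. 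To handle general $\rho$ I would filter $L$ by powers of $\mathfrak{m}_A$ and run the associated spectral sequence; the hardest part will be organizing this comparison so as to conclude injectivity in the deformed complex itself, rather than just on the graded pieces. Once that is in hand, $a=[\rho_{\mathrm{tot}},h]=dh+[\rho,h]$ and $\phi=e^a\in I(\rho)$, completing the proof.
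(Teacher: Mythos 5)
Your reduction is correct as far as it goes, and it is a genuinely different route from the paper's: the identification of $\operatorname{Del}(L)_1(\rho,\rho)$ with unipotent $B\otimes A$-linear DG-algebra automorphisms, the inclusion $I(\rho)\subseteq\{\dots\}$ via the null-homotopy $h$, and the translation of the reverse inclusion into injectivity of the natural map
\[
H^0\bigl(L,[\rho_{\mathrm{tot}},-]\bigr)\longrightarrow
\Der_{B\otimes A}\bigl(H^0(R\otimes A,\rho_{\mathrm{tot}}),H^0(R\otimes A,\rho_{\mathrm{tot}})\bigr)
\]
are all sound, as is the computation $H^0(\Der_B^*(R,R),[\delta,-])\cong\Der_B(C,C)$ via semifreeness of $\Omega_{R/B}$. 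The genuine gap is that the argument stops exactly where the content of the lemma lies: the injectivity above for a general Maurer--Cartan twist $\rho_{\mathrm{tot}}=\delta+x$ is never proved. You establish it only for $x=0$ and defer the deformed case to a filtration/spectral-sequence comparison whose ``hardest part'' you explicitly leave open. That step is not bookkeeping: injectivity of a map of cohomology groups does not pass formally from associated graded pieces, and organizing the comparison requires two inputs the proposal never isolates: (i) flatness of Maurer--Cartan deformations, i.e.\ $H^i(R\otimes A,\rho_{\mathrm{tot}})=0$ for $i\neq 0$ and $\mathfrak{m}_A^jH^0/\mathfrak{m}_A^{j+1}H^0\cong C\otimes(\mathfrak{m}_A^j/\mathfrak{m}_A^{j+1})$, which is what lets the hypothesis ``$a$ acts trivially on $H^0$'' descend to each graded piece; and (ii) the vanishing $H^{-1}(\Der_B^*(R,R),[\delta,-])=0$, which is what lets you choose the primitive $h$ inside $L^{-1}=\Der_B^{-1}(R,R)\otimes\mathfrak{m}_A$ rather than merely with coefficients in $A$. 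Both rest on the hypothesis that $H^*(R)$ is concentrated in degree $0$; the paper's own warning (the Koszul complex of $x^2,x^3\in\K[x]$) shows the lemma, hence your injectivity claim, fails without it, so any completion must invoke it essentially.

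For the record, your plan can be closed in either of two ways. One is the standard Artinian one-step correction: if $a\in\Der_B^0(R,R)\otimes\mathfrak{m}_A^j$ is a cocycle acting trivially on $H^0$, then by (i) its symbol in $\Der_B^0(R,R)\otimes\mathfrak{m}_A^j/\mathfrak{m}_A^{j+1}$ is a $[\delta,-]$-cocycle inducing the zero derivation of $C$, hence equals $[\delta,\bar h]$ by your unperturbed computation; lifting $\bar h$ and replacing $a$ by $a-[\rho_{\mathrm{tot}},h]$ pushes $a$ one step deeper into the filtration without changing its class or the hypotheses, and nilpotency of $\mathfrak{m}_A$ terminates the induction. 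The other stays closer to your semifree argument: $\Omega_{R/B}\otimes A$ is still semifree over $(R\otimes A,\rho_{\mathrm{tot}})$ (its generators $dx_i$ have degrees bounded above), and $R\otimes A\to H^0(R\otimes A,\rho_{\mathrm{tot}})$ is a quasi-isomorphism by (i), so your computation applies verbatim with $A$-coefficients; then (ii) and the long exact sequence of $0\to\Der_B^*(R,R)\otimes\mathfrak{m}_A\to\Der_B^*(R,R)\otimes A\to\Der_B^*(R,R)\to 0$ give it with $\mathfrak{m}_A$-coefficients. The paper instead proceeds by a bare-hands recursion, solving $a(x_i)=(\rho b+b\rho)(x_i)$ generator by generator by downward induction on $\deg(x_i)$, using only $H^k(R\otimes A,\rho)=0$ for $k<0$; your homological packaging is more conceptual, but until one of these completions is written out it is not a proof.
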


\begin{proof}
Let $a\in \Der_B^0(R,R)\otimes\mathfrak{m}_A$, then by definition
$e^a\in I(\rho)$ if and only if there exists $b\in \Der_B^{-1}(R,R)\otimes\mathfrak{m}_A$
such that $a=[\rho,b]$. On the other hand, $e^a$ is an isomorphism of $(R\otimes A,\rho)$
if and only if $[\rho,a]=0$,  and induces the identity in cohomology if and only if
$a$ induces the trivial map in cohomology.
Therefore, we only need to prove that if $a\colon R\otimes A\to R\otimes A$ is a morphism of complexes which is trivial in cohomology then  $a=\rho b+b\rho$ for some
$b\in \Der_B^{-1}(R,R)\otimes\mathfrak{m}_A=\Der_B^{-1}(R,R\otimes\mathfrak{m}_A)$.
Since, the derivations $a,b$ are uniquely determined by the values $a(x_i),b(x_i)$, we will solve the
equations $a(x_i)=(\rho b+b\rho)(x_i)$ recursively by induction on $-\deg(x_i)$.
If $x_i$ has degree $0$, then $\rho(x_i)=0$ and then
$a(x_i)\in \rho(R^{-1}\otimes \mathfrak{m}_A)$ (since $a$ is trivial in cohomology): therefore, we can choose
$b(x_i)\in R^{-1}\otimes \mathfrak{m}_A$ such that
$a(x_i)=\rho b(x_i)$.
Next, assume that $x_i$ has degree $k<0$ and that $b(x_j)$ is defined for every $x_j$ of degree bigger than $k$. Then $y_i:=a(x_i)-b(\rho x_i)$ is defined and
$\rho(y_i)=\rho a(x_i)-\rho b (\rho x_i)=a(\rho x_i)-(a-b\rho)(\rho x_i)=0$.
Since $H^k(R\otimes A,\rho)=0$, there exists $b(x_i)\in R^{k-1}\otimes \mathfrak{m}_A$
such that $\rho b(x_i)=y_i$.
\end{proof}

We point out that the fact that the cohomology of $R$ is concentrated in degree 0 plays an  essential role in the above proof: for instance, if we take  $R$ as the Koszul complex of $x^2,x^3\in \K[x]$ then the result of Lemma~\ref{lem.stabilizzatoriirrilevanti} fails to be true.
\end{example}

\noindent  For any DGLA $L$, the previous constructions allow us to define the corresponding formal objects:

\begin{enumerate}
\item the \emph{Maurer-Cartan functor} $\MC_L\colon \mathbf{Art}\to
\mathbf{Set}$ by setting \cite{ManettiSeattle}:
\[
\MC_L(A)=\MC (L\otimes \mathfrak{m}_A); \]
\item the deformation functor $\Def_L:\Art \longrightarrow \Set$:
\[
\Def_L(A)=\frac{\MC_L(A)}{\text{gauge}};\]
\item the \emph{ formal reduced  Deligne  groupoid } $\overline{\operatorname{Del}}_L\colon \Art\to \mathbf{Grpd}$:
\[\overline{\operatorname{Del}}_L(A)=\overline{\operatorname{Del}}(L\otimes \mathfrak{m}_A).\]
\end{enumerate}

Every morphism of DGLAs induces a  morphism of formal reduced groupoids and,
therefore, a morphism of the associated Maurer-Cartan and deformation functors.
A basic result asserts that if
$L$ and $M$ are quasi-isomorphic DGLAs, then the associated
functor $\overline{\operatorname{Del}}_L$ and $\overline{\operatorname{Del}}_M$
are equivalent \cite{GoMil1,Kont94,ManettiSeattle,yek}.

Given a semicosimplicial groupoid
 \[
G^{\Delta}:\qquad \xymatrix{ {{G}_0}
\ar@<2pt>[r]\ar@<-2pt>[r] & { {G}_1}
      \ar@<4pt>[r] \ar[r] \ar@<-4pt>[r] & { {G}_2}
\ar@<6pt>[r] \ar@<2pt>[r] \ar@<-2pt>[r] \ar@<-6pt>[r]&
\cdots}
\]
its total space is the groupoid
$\operatorname{tot}(G^{\Delta})$ defined in the following way \cite{hinich,FMM}:
\begin{enumerate}

\item The objects of $\operatorname{tot}(G^{\Delta})$ are the
pairs $(l,m)$ with $l$ an object in $G_0$ and $m$ a morphism in $G_1$ between $
\partial_{0}l$ and $\partial_{1}l$. Moreover  the three images of $m$ via the maps $\partial_{i}$ are the edges of a 2-simplex in the nerve of $G_2$, i.e.
\[   (\partial_{0}m)(\partial_{1}m)^{-1}(\partial_{2}m)=1 \text{ in }
(G_2)_1(\partial_{2}\partial_{0}l,\partial_{2}\partial_{0}l).\]

\item The morphisms between $(l_0,m_0)$ and $(l_1,m_1)$ are morphisms $a$ in $G_0$
between $l_0$ and $l_1$ making the diagram
\[
\xymatrix{
\partial_{0}l_0\ar[r]^{m_0}\ar[d]_{\partial_{0}a}&\partial_{1}l_0\ar[d]^{{\partial_{1}a}}\\
\partial_{0}l_1\ar[r]^{m_1}&\partial_{1}l_1
}
\]
commutative in $G_1$.
\end{enumerate}

\begin{example}\label{ex.totdiinsiemi}
Let  \[
G^{\Delta}:\qquad \xymatrix{ {{G}_0}
\ar@<2pt>[r]\ar@<-2pt>[r] & { {G}_1}
      \ar@<4pt>[r] \ar[r] \ar@<-4pt>[r] & { {G}_2}
\ar@<6pt>[r] \ar@<2pt>[r] \ar@<-2pt>[r] \ar@<-6pt>[r]&
\cdots}
\]
be a semicosimplicial groupoid. Assume that for every $i$ the natural map $G_i\to \pi_0(G_i)$ is an equivalence, i.e., every $G_i$ is equivalent to a set. Then also
$\operatorname{tot}(G^{\Delta})$ is equivalent to a set and, more precisely, to the equalizer of the diagram of sets
\[
\xymatrix{ {\pi_0({G}_0})
\ar@<2pt>[r]\ar@<-2pt>[r] & { \pi_0({G}_1)}.
}
\]
\end{example}

The next theorem is one of the main results of  \cite{hinich,FIM}.

\begin{theorem}\label{thm.teoremadequeitre}
Let $\mathfrak{g}^{\Delta}$ be a semicosimplicial DGLA, such that $H^{j}(\mathfrak{g}_i)=0$ for every $i$ and $j<0$ and let
 \[
\overline{\operatorname{Del}}_{\mathfrak{g}^{ \Delta}}:\qquad \xymatrix{
{\overline{\operatorname{Del}}_{\mathfrak{g}_0}}
\ar@<2pt>[r]\ar@<-2pt>[r] & { \overline{\operatorname{Del}}_{\mathfrak{g}_1}}
      \ar@<4pt>[r] \ar[r] \ar@<-4pt>[r] & {\overline{\operatorname{Del}}_{\mathfrak{g}_2}}
\ar@<6pt>[r] \ar@<2pt>[r] \ar@<-2pt>[r] \ar@<-6pt>[r]&
\cdots}
\]
the associated semicosimplicial formal reduced Deligne groupoid.
Then, there exists a natural isomorphism of
functors $\Art\to \Set$
\[
\Def_
{\operatorname{TW}(\mathfrak{g}^{ \Delta})} =  \pi_0 (
{\operatorname{tot}}(\overline{\operatorname{Del}}_{\mathfrak{g}^{ \Delta}})).
\]

\end{theorem}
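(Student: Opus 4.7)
The plan is to construct a natural map
\[
\Phi_A\colon \Def_{\TW(\mathfrak{g}^\Delta)}(A)\longrightarrow \pi_0\bigl(\tot(\overline{\operatorname{Del}}_{\mathfrak{g}^\Delta})(A)\bigr),\qquad A\in \Art,
\]
and then to verify, by an obstruction-theoretic induction exploiting the hypothesis $H^{<0}(\mathfrak{g}_i)=0$, that it is bijective and natural in $A$. On Maurer--Cartan representatives the construction proceeds as follows. Given $x=(x_n)_{n\ge 0}\in \MC(\TW(\mathfrak{g}^\Delta)\otimes \mathfrak{m}_A)$ with $x_n\in (A_{PL})_n\otimes \mathfrak{g}_n\otimes \mathfrak{m}_A$, set $l:=x_0\in \MC(\mathfrak{g}_0\otimes \mathfrak{m}_A)$; the semicosimplicial compatibility presents $x_1$ as a polynomial family of Maurer--Cartan elements of $\mathfrak{g}_1\otimes \mathfrak{m}_A$ over $[0,1]$, specialising to $\partial_0 l$ at $t=0$ and to $\partial_1 l$ at $t=1$, so the standard interval gauge-flow formula produces a canonical gauge transformation $m=e^a\in \operatorname{Del}(\mathfrak{g}_1\otimes \mathfrak{m}_A)(\partial_0 l,\partial_1 l)$. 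An analogous analysis of $x_2$ over $\Delta^2$ reveals that the composite $(\partial_0 m)(\partial_1 m)^{-1}(\partial_2 m)$ equals $e^{db+[\partial_0\partial_0 l,b]}$ for some $b\in \mathfrak{g}_2^{-1}\otimes \mathfrak{m}_A$, hence lies in the irrelevant stabilizer $I(\partial_0\partial_0 l)$ and becomes trivial in $\overline{\operatorname{Del}}_{\mathfrak{g}_2}(A)$. Consequently $(l,[m])$ is a well-defined object of $\tot(\overline{\operatorname{Del}}_{\mathfrak{g}^\Delta})(A)$.

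A gauge equivalence $e^h\ast x=y$ in $\TW(\mathfrak{g}^\Delta)\otimes \mathfrak{m}_A$ induces, via the same flow construction applied to the $0$- and $1$-simplicial strata of $h$, a morphism $(l,[m])\to (l',[m'])$ in $\tot(\overline{\operatorname{Del}})(A)$; this shows that $\Phi_A$ passes to $\Def_{\TW(\mathfrak{g}^\Delta)}(A)$ and is natural in $A$. Bijectivity follows by a step-by-step analysis over small extensions $0\to J\to A'\to A\to 0$: the obstruction to lifting a Maurer--Cartan cochain from $A$ to $A'$, or to lifting a gauge equivalence, decomposes into components in $H^{j}(\mathfrak{g}_i)\otimes J$ with $i+j\le 2$. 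The hypothesis $H^{<0}(\mathfrak{g}_i)=0$ kills all negative-degree contributions, while the nonnegative ones are absorbed by suitable choices of representatives at each stage. The Whitney integration quasi-isomorphism $\TW\to \tot$ from Section~\ref{sec.scla} identifies this inductive lifting with a concrete one in the mapping cone of the semicosimplicial DGLA.

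The main obstacle is the interlocking bookkeeping among (i) the path-transport producing gauge transformations from $1$-simplex data, (ii) the $2$-simplex cocycle condition built into the definition of $\tot$, and (iii) the irrelevant stabilizers of each $\mathfrak{g}_i$. Lemma~\ref{lem.stabilizzatoriirrilevanti} is precisely what permits the collapse from $\infty$-groupoidal to $1$-groupoidal data: under $H^{<0}(\mathfrak{g}_i)=0$, the irrelevant stabilizer exhausts the part of the gauge stabilizer acting trivially in cohomology, so the natural Deligne $\infty$-groupoid of $\mathfrak{g}^\Delta$ is already $2$-truncated and coincides with $\overline{\operatorname{Del}}_{\mathfrak{g}^\Delta}$. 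The stated isomorphism then reduces to the descent theorem for the reduced Deligne groupoid in \cite{hinich,FIM}.
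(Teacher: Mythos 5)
Your proposal and the paper's proof ultimately rest on the same foundation: the paper's entire proof of Theorem~\ref{thm.teoremadequeitre} is the one-line identification of the statement with Theorem~7.6 of \cite{FIM} (together with the remark that \cite{hinich} proves it only when every $\mathfrak{g}_i$ is concentrated in non-negative degrees). Your closing sentence, deferring to \cite{hinich,FIM}, is therefore exactly where you and the paper coincide; and your description of the mechanism --- integrating the $1$-simplex stratum $x_1$ to a gauge transformation between $\partial_0 l$ and $\partial_1 l$, with the $2$-simplex stratum forcing the cocycle identity only up to irrelevant stabilizer --- is a fair account of what happens inside that reference.

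Read as an argument rather than as commentary on the citation, however, the proposal has genuine gaps. First, Lemma~\ref{lem.stabilizzatoriirrilevanti} cannot play the role you assign to it: that lemma concerns the particular DG-algebra $R=B[\{x_i\}]$ quasi-isomorphic to a ring $C$ concentrated in degree $0$, and the paper explicitly warns (via the Koszul complex of $x^2,x^3\in\K[x]$) that its conclusion fails without that concentration. Under the hypothesis of Theorem~\ref{thm.teoremadequeitre} the $\mathfrak{g}_i$ may have cohomology in every non-negative degree, so the lemma does not apply, and nothing proved earlier in the paper yields the truncation statement you need (that $H^{<0}(\mathfrak{g}_i)=0$ makes the Deligne $\infty$-groupoid of $\mathfrak{g}_i\otimes\mathfrak{m}_A$ equivalent to the reduced Deligne $1$-groupoid): that statement is part of the \emph{content} of \cite{FIM}, not an input to it. Second, reducing to \cite{hinich} is not legitimate here: Hinich assumes $\mathfrak{g}_i^j=0$ for $j<0$, which is strictly stronger than $H^j(\mathfrak{g}_i)=0$ for $j<0$, and one cannot bridge the difference by truncating, since the canonical truncation $\tau_{\ge 0}L$ of a DGLA carries no natural bracket (for $u\in L^{-1}$, $v\in L^1$ one has $[du,v]=d[u,v]+[u,dv]$, so $[\operatorname{im}(d),L^1]$ need not map to zero in the truncation). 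Third, your bijectivity step --- obstruction components ``absorbed by suitable choices of representatives'' --- names the difficulty rather than resolving it: normalizing the strata $x_n$, $n\ge 2$, and proving injectivity and surjectivity of $\Phi_A$ is precisely the substance of \cite[Theorem~7.6]{FIM}. So either you invoke \cite{FIM} outright, in which case your proof collapses to the paper's citation, or you owe these three arguments.
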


\begin{proof}
This is Theorem 7.6 of \cite{FIM}, expressed in terms of Deligne groupoids and irrelevant stabilizers.
The same result is proved in \cite{hinich} under the assumption that every $\mathfrak{g}_i$ is concentrated in non negative  degrees (and then Deligne=reduced Deligne).
\end{proof}

\section{Embedded deformations of complete intersection}

Let $X$ be a smooth algebraic variety over an algebraically closed field $\K$ of characteristic zero, and $Z\subset X$ a closed subvariety of pure codimension $p$ as in the set-up of the introduction: there exist a Zariski open subset $U\subset X$, a  locally free sheaf  $\sE$, of rank $p$ over $U$, and a section
$f \in \Gamma(U,\sE)$ such that $Z=\{f=0\} \subset U$.

The aim of this section is to describe  two convenient differential graded Lie algebras controlling the
functor $\Hilb_{Z|X}\colon \Art\to \Set$ of infinitesimal embedded deformations of $Z$ in $X$.

The first DGLA is simpler and it has a clear geometric interpretation. The second, which is quasi-isomorphic to the first one, will be used
in the proof of our main result and it is similar to the ones considered in  \cite{ManettiSemireg,donarendiconti} in the case $Z$ smooth.

\subsection{Local case.}

Assume that $U=\Spec P$ is a smooth affine over $\K$ and $\sE=\Oh_U^p$. If $f_1,\ldots,f_p$ are the components of the section $f$, then  the ideal of
$Z$ is $J=(f_1, \ldots , f_p)\subset P$.

It is well known (see e.g. \cite{Ar,Sernesi}) that the set of embedded deformations of $Z$ over a local Artin ring $A$ corresponds naturally to the set of ideals $\tilde{J}\subset P\otimes A$ generated by liftings of $f_1,\ldots,f_p$. Notice that every lifting of $f_i$ may be written as $f_i+g_i$ with $g_i\in P\otimes\mathfrak{m}_A$.

Let $R$ be the Koszul complex of the sequence $f_1,\ldots,f_p$, considered as a DG-algebra;
in other words, $R$ is the polynomial algebra $P[y_1,\dots,y_p]$, where $\deg(y_i)=-1$ and $d(y_i)=f_i$.
In particular, $R^{0}=P$, $R ^{-1}=\bigoplus_{i} P y_i$ and $R^j=0$ for every $j>0$. Since $Z$ is a complete intersection, the natural map $R\to P/J$ is a quasi-isomorphism of DG-algebras.

\begin{lemma}\label{lem. DGLA local embed deform}
In the notation above, the differential graded Lie algebra  $L=\Der^*_{P}(R,R)$ controls the
embedded deformations of $Z \subset X$. More precisely,
there exists an equivalence of formal groupoids
\[ \overline{\operatorname{Del}}_L\to \Hilb_{Z|X}\]
and, therefore, an isomorphism of functors of Artin rings
\[ \Def_L\to \Hilb_{Z|X}.\]
\end{lemma}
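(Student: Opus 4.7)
The plan is to apply Example~\ref{ex.deligneforembedded} with $B=P$ and $C=P/J$, using the Koszul quasi-isomorphism $R=P[y_1,\dots,y_p]\to P/J$, so that the reduced Deligne groupoid of $L=\Der^*_P(R,R)$ is realised as the groupoid of $P\otimes A$-linear DG-algebra deformations of $R$ over $A$; the deformed ideal of $Z$ is then extracted from degree-zero cohomology.

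\emph{Maurer--Cartan elements.} Since $R$ is the free graded-commutative $P$-algebra on the $y_i$ of degree $-1$, a $P$-derivation $\xi\in L^1\otimes\mathfrak{m}_A$ is determined by its values $\xi(y_i)=g_i\in P\otimes\mathfrak{m}_A$. Both $[\delta,\xi]$ and $[\xi,\xi]$ are $P$-derivations, so the Maurer--Cartan equation can be checked on the generators $y_i$, where it reads $\delta(g_i)+\xi(f_i)+\xi(g_i)=0$; each summand vanishes because $f_i,g_i\in R^0=P$, $\delta|_P=0$, and $\xi$ is $P$-linear. Thus $\MC(L\otimes\mathfrak{m}_A)$ is in natural bijection with tuples $(g_1,\dots,g_p)\in(P\otimes\mathfrak{m}_A)^p$, and by the standard description of embedded deformations of a complete intersection \cite{Ar,Sernesi} with ideals $\tilde J=(f_1+g_1,\dots,f_p+g_p)\subset P\otimes A$ lifting $J$, i.e.\ with $\Hilb_{Z|X}(A)$.

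\emph{Morphisms.} By Example~\ref{ex.deligneforembedded}, a morphism $\xi\to\xi'$ in $\operatorname{Del}(L\otimes\mathfrak{m}_A)$ is a $P\otimes A$-linear DG-algebra isomorphism $\phi\colon(R\otimes A,\delta+\xi)\to(R\otimes A,\delta+\xi')$ reducing to the identity modulo $\mathfrak{m}_A$. Such a $\phi$ acts as the identity on $R^0\otimes A=P\otimes A$ (being $P\otimes A$-linear of degree zero), so it forces $\tilde J_\xi=\tilde J_{\xi'}$. Conversely, when the ideals coincide, a $\phi$ is built by matching the two generating $p$-tuples $(f_i+g_i)$ and $(f_j+g_j')$ via an invertible $p\times p$ matrix over $P\otimes A$ reducing to the identity, and extending by the Leibniz rule to all of $R\otimes A$. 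Lemma~\ref{lem.stabilizzatoriirrilevanti} identifies the irrelevant stabilizer $I(\xi)$ with those $\phi$ inducing the identity in cohomology; since $H^*(R\otimes A,\delta+\xi)$ is concentrated in degree zero where every admissible $\phi$ is already the identity, any two morphisms in $\operatorname{Del}(\xi,\xi')$ differ by an element of $I(\xi)$. Therefore $\overline{\operatorname{Del}}_L(A)(\xi,\xi')$ is a singleton when $\tilde J_\xi=\tilde J_{\xi'}$ and is empty otherwise.

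Combining the two steps, $\xi\mapsto\tilde J_\xi$ gives an equivalence of formal groupoids $\overline{\operatorname{Del}}_L\to\Hilb_{Z|X}$ (with $\Hilb_{Z|X}$ viewed discretely), and passing to $\pi_0$ produces the asserted isomorphism $\Def_L\cong\Hilb_{Z|X}$. The main obstacle is the invertible-matrix argument used in the morphism analysis: one must verify that two lifts of the regular sequence $f_1,\dots,f_p$ generating the same ideal in $P\otimes A$ differ by an invertible matrix reducing to the identity. This is a standard consequence of Nakayama's lemma combined with the regularity of $f_1,\dots,f_p$, and is precisely where the local complete intersection hypothesis on $Z$ enters.
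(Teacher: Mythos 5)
Your proposal is correct and follows essentially the same route as the paper's proof: Maurer--Cartan elements are identified with tuples $(g_1,\ldots,g_p)$ mapping onto ideals generated by liftings via Example~\ref{ex.deligneforembedded}, gauge equivalence of two tuples generating the same ideal is produced by an invertible matrix congruent to the identity modulo $\mathfrak{m}_A$, and Lemma~\ref{lem.stabilizzatoriirrilevanti} kills the remaining automorphisms because the deformed Koszul complex has cohomology concentrated in degree zero, where every admissible isomorphism acts as the identity. The one point where precision matters is your matrix step: the paper obtains coefficients $a_{ij}\in P\otimes\mathfrak{m}_A$ from flatness of $\tilde{J}$, namely $g_i'-g_i\in\tilde{J}\cap(P\otimes\mathfrak{m}_A)=\mathfrak{m}_A\tilde{J}$, which is the actual mechanism behind your appeal to ``Nakayama plus regularity''---a matrix merely expressing one generating set in terms of the other is only congruent to the identity modulo $J+\mathfrak{m}_A$, hence invertible only near $Z$ and not on all of $\operatorname{Spec} P$, so this flatness refinement (itself a consequence of the sequence being regular) is genuinely needed rather than a routine application of Nakayama's lemma.
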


\begin{proof}
Let $A$ be a local Artin ring. We have
$\operatorname{MC}_{\Der^*_{P}(R,R)}(A)=\Der^1_{P}(R,R)
\otimes\mathfrak{m}_A=(P\otimes\mathfrak{m}_A)^p$, since every derivation $\eta$ of $R$ of degree 1 is uniquely determined by the sequence $\eta(y_1),\ldots,\eta(y_p)\in P\otimes\mathfrak{m}_A$.
Then Example~\ref{ex.deligneforembedded} gives
a morphism of groupoids
\[ \overline{\operatorname{Del}}_L(A)\to \Hilb_{Z|X}(A)\]
which is surjective on objects.

Let $\eta,\mu\in \Der^1_{P}(R,R)
\otimes\mathfrak{m}_A$ be two derivations giving the same deformation, i.e.,
\[ \tilde{J}=(f_1+\eta(y_1),\ldots,f_p+\eta(y_p))=(f_1+\mu(y_1),\ldots,f_p+\mu(y_p))\]
 as ideal of $P\otimes A$.
Then, by the  flatness of $\tilde{J}$, we have
\[\eta(y_i)-\mu(y_i)\in \ker(\tilde{J}\to \tilde{J}\otimes_A\K=J)=\mathfrak{m}_A\tilde{J}\]
and then there exist $a_{i1},\ldots,a_{ip}\in P\otimes \mathfrak{m}_A$ such that
\[  \eta(y_i)-\mu(y_i)=\sum_{j}a_{ij}(f_j+\mu(y_j)).\]
Taking $c\in \Der^0_P(R,R)\otimes\mathfrak{m}_A$ such that
$e^c(y_i)=y_i+\sum_{j}a_{ij}y_j$ we have that $e^c$ is an
isomorphism of the two DG-algebras $(R\otimes A,d+\eta)$ and $(R\otimes A,d+\mu)$, i.e., $e^c\ast \eta=\mu$ and $\eta,\mu$ are gauge equivalent.
The last step of the proof is exactly Lemma~\ref{lem.stabilizzatoriirrilevanti}.
\end{proof}

\medskip

As regard the second DGLA, consider the DG-algebra
\[ S=P[z_1,\ldots,z_p,y_1,\ldots,y_p],\]
where $\deg{z_i}=0$, $\deg{y_i}=-1$ and $d(y_i)=f_i-z_i$.
Notice that the map
\begin{equation}\label{equ.surjectyivequasiiso}
S\to P,\qquad y_i\mapsto 0,\quad z_i\mapsto f_i
\end{equation}
is a surjective quasi-isomorphism of DG-algebras.

Let $I\subset S$ be the ideal generated by $z_1,\ldots,z_p$, i.e., the
kernel of the projection map $\pi\colon S\to R$.
Then, consider the DGLA  $H{=}\{\eta\in\Der^*_P(S,S)\mid \eta(I)\subset I\}$ and the surjective morphism of DGLAs
\[ \Phi\colon H\to \Der^*_P(R,R),\qquad
\Phi(\eta)(y_i)=\pi(\eta(y_i)).\]

\begin{lemma}\label{lem. sostituzione DGLa con cono}
Let $\chi\colon H\to \Der^*_P(S,S)$ be the inclusion. Then, the horizontal maps
\[ \xymatrix{H\ar[d]_\chi\ar[r]^{\Phi\quad}&\Der^*_P(R,R)\ar[d]\\
\Der^*_P(S,S)\ar[r]&0}\]
induce a quasi-isomorphism of homotopy fibers.
\end{lemma}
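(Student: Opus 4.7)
The plan is to compute both homotopy fibers explicitly and show that the induced map between them is a quasi-isomorphism. The homotopy fiber $TW(\Der^*_P(S,S)\to 0)$ is tautologically $\Der^*_P(S,S)$ itself, since the condition $m(1,0)=0$ forces $m=0$ inside $0[t,dt]$. For $TW(\Phi)$, the first step is the surjective counterpart of Remark~\ref{rem.quasiisoTWcono}: when $\Phi$ is surjective, the formula $k\mapsto(k,0)$ defines a quasi-isomorphism $\ker(\Phi)\to TW(\Phi)$. Surjectivity of $\Phi$ is immediate --- given $\mu\in\Der^*_P(R,R)$, the derivation $\eta$ defined by $\eta(z_i)=0$ and $\eta(y_i)=$ any $S$-lift of $\mu(y_i)$ lies in $H$ and satisfies $\Phi(\eta)=\mu$ --- while the quasi-isomorphism itself is obtained by passing through $\tot(\Phi^\Delta)$ via the integration map $I$ of Example~\ref{ex.tot TW morfismo}: the cokernel of the inclusion $\ker(\Phi)\hookrightarrow\tot(\Phi^\Delta)$ identifies with the mapping cone of the identity of $\Der^*_P(R,R)$, which is acyclic.

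Under these identifications, the induced map of homotopy fibers becomes the inclusion
\[
\ker(\Phi)=\Der^*_P(S,I)\hookrightarrow \Der^*_P(S,S).
\]
Since $\Omega_{S/P}$ is semifree, applying the exact functor $\Der^*_P(S,-)=\Hom^*_S(\Omega_{S/P},-)$ to the short exact sequence of $S$-modules $0\to I\to S\to R\to 0$ yields a short exact sequence
\[
0\to \Der^*_P(S,I)\to \Der^*_P(S,S)\to \Der^*_P(S,R)\to 0.
\]
The proof therefore reduces to verifying that $\Der^*_P(S,R)=\Hom^*_S(\Omega_{S/P},R)$ is acyclic.

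For this final step, one observes that $\Omega_{S/P}$ is the free $S$-module generated by $dz_i$ (degree $0$) and $dy_i$ (degree $-1$), with differential $d(dz_i)=0$ and $d(dy_i)=-dz_i$; accordingly, it splits as a direct sum of $p$ pieces of the form $S\otimes V$, where $V$ is the acyclic two-term complex $\K\xrightarrow{-1}\K$. Hence $\Omega_{S/P}$ is both semifree and acyclic, and is therefore homotopy equivalent to $0$ as an $S$-module by property (1) of semifree modules, so that $\Hom^*_S(\Omega_{S/P},R)\simeq 0$. The main subtle point is the surjective analogue of Remark~\ref{rem.quasiisoTWcono}: it is not formally stated in the excerpt but follows directly from the mapping cone description above, and it is the one piece I would isolate and prove before assembling the rest of the argument.
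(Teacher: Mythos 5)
Your proof establishes the wrong quasi-isomorphism: you have taken the homotopy fibers of the \emph{horizontal} arrows ($TW(\Phi)$ and $TW(\Der^*_P(S,S)\to 0)$) and shown that the \emph{vertical} arrows induce a quasi-isomorphism between them. The lemma asserts the opposite configuration: the horizontal maps $(\Phi,0)$, viewed as a morphism from the vertical arrow $\chi$ to the vertical arrow $\Der^*_P(R,R)\to 0$, induce a map
\[
TW(\chi)\longrightarrow TW\bigl(\Der^*_P(R,R)\to 0\bigr)=\Der^*_P(R,R),\qquad (h,m(t,dt))\mapsto \Phi(h),
\]
and it is \emph{this} map that is claimed to be a quasi-isomorphism (compare Example~\ref{ex.quadrato.morfismo.infinito}, where the horizontal maps of a commutative square induce the morphism $TW(\chi)\dashrightarrow TW(\eta)$ between the fibers of the vertical ones). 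The distinction matters: your statement is a quasi-isomorphism between two acyclic DGLAs --- indeed your own key computation ($\Omega_{S/P}$ semifree and acyclic, hence $\Hom^*_S(\Omega_{S/P},M)\simeq 0$ for every $S$-module $M$) shows that $TW(\Phi)\simeq\ker\Phi=\Der^*_P(S,I)$ and $\Der^*_P(S,S)$ both have vanishing cohomology --- so it carries no information about $\Der^*_P(R,R)$. The lemma, by contrast, is exactly what Theorem~\ref{thm.homotopyfibercontrolligHilb} needs: the homotopy fiber of the inclusion $\chi$ (globally, of $TW(\sU,\sM_\perp)\to TW(\sU,\sM)$) must be identified, up to quasi-isomorphism, with $\Der^*_P(R,R)$, the DGLA that controls $\Hilb_{Z|X}$ by Lemma~\ref{lem. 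DGLA local embed deform}. Your statement cannot be substituted for it in that argument.

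The repair is short, because you have already proved all the necessary ingredients, which are the same ones the paper uses: $\Phi$ is surjective with $\ker\Phi=\Der^*_P(S,I)$, and $\Hom^*_S(\Omega_{S/P},M)$ is acyclic for every $M$, in particular for $M=S$ and $M=I$. Hence \emph{both} horizontal maps are surjective with acyclic kernels, i.e., both are quasi-isomorphisms of DGLAs. It remains to invoke homotopy invariance of the homotopy fiber: transporting through the integration quasi-isomorphisms of Example~\ref{ex.tot TW morfismo}, the map $TW(\chi)\to\Der^*_P(R,R)$ is compared with the map of mapping cones $H\oplus\Der^*_P(S,S)[-1]\to \Der^*_P(R,R)$, $(h,m)\mapsto\Phi(h)$, and the five lemma applied to the long exact cohomology sequences of the two cones (using $H^*(\Der^*_P(S,S))=0$ and the isomorphism $H^*(H)\cong H^*(\Der^*_P(R,R))$ induced by $\Phi$) shows that it is a quasi-isomorphism.
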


\begin{proof} The horizontal maps are surjective and then it is sufficient to prove that
\[ \Der^*_P(S,S)=\Hom^*_S(\Omega_{S/P},S),\qquad
\ker\Phi=\Der^*_P(S,I)=\Hom^*_S(\Omega_{S/P},I)\]
are acyclic; both equalities follow from the fact that
the $S$-module $\Omega_{S/P}$ is semifree and acyclic.
\end{proof}

\subsection{Global case}\label{subsec.global}
The section $f\in \Gamma(U,\sE)$ gives a Koszul-Tate resolution of the structure sheaf of $Z$:
\[
 0 \to \bigwedge^p \sE^\vee \to \bigwedge^{p-1} \sE^\vee \to \cdots \to \bigwedge^2 \sE^\vee
 \to\sE^\vee \to \Oh_U\to \Oh_Z\to 0
\]
and, therefore, a quasi-isomorphism of sheaves of DG-algebras over $\Oh_U$
\[ \sR=\Sym^*_{\Oh_U}(\sE^\vee[1])\to \Oh_Z,\]
where the differential on $\sR$ is given by extending $f^*\colon \sE^\vee\to \Oh_U$ via Leibniz rule.

Let $\DER^*_{\Oh_U}(\sR,\sR)$ be the DG-sheaf of $\Oh_U$-linear derivations of $\sR$, it is a sheaf of DGLAs and consider an affine open covering $\sU=\{U_i\}$, which is trivializing for the sheaf $\sE$.

\begin{theorem}\label{teo. dgla globale HILBERT scheme}
In the above set-up, assume that $\sU$ is an affine open cover of $X$ which is trivializing for the locally free sheaf $\sE$, then the infinitesimal embedded deformations of $Z$ in $X$ are controlled  by the  differential graded Lie algebra
$TW(\sU,\DER^*_{\Oh_U}(\sR,\sR))$.
\end{theorem}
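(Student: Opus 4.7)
The plan is to globalize Lemma~\ref{lem. DGLA local embed deform} using the descent result of Theorem~\ref{thm.teoremadequeitre}, applied to the \v{C}ech-type semicosimplicial DGLA associated with $\sU$. Concretely, I would consider
\[ \mathfrak{g}^\Delta\;:\; \mathfrak{g}_n = \prod_{i_0<\cdots<i_n} L_{i_0\cdots i_n}, \qquad L_V := \DER^*_{\Oh_U}(\sR,\sR)(V), \]
so that $TW(\mathfrak{g}^\Delta) = TW(\sU,\DER^*_{\Oh_U}(\sR,\sR))$, and then prove $\Def_{TW(\mathfrak{g}^\Delta)} = \Hilb_{Z|X}$.

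First, for each multi-index the intersection $V = U_{i_0\cdots i_n}$ is affine (assuming $X$ separated) and trivializes $\sE$; Lemma~\ref{lem. DGLA local embed deform} then supplies an equivalence of formal reduced Deligne groupoids $\overline{\operatorname{Del}}_{L_V} \simeq \Hilb_{Z\cap V|V}$, and in particular every $\overline{\operatorname{Del}}_{L_V}$ is equivalent to a discrete groupoid (a set). To apply Theorem~\ref{thm.teoremadequeitre} I need the cohomology vanishing $H^j(\mathfrak{g}_n) = 0$ for $j<0$, which reduces to the vanishing for each $L_V$: since $\Omega_{\sR(V)/\Oh_U(V)}$ is a free $\sR(V)$-module of rank $p$ on generators of degree $-1$, we have $L_V \cong \sR(V)^p[1]$ as a complex, and the Koszul quasi-isomorphism $\sR(V) \simeq \Oh_{Z\cap V}$ forces $H^j(L_V)=0$ for $j\ne 1$, giving the required vanishing (and, as a sanity check, $H^1(L_V) = \Gamma(Z\cap V, N_{Z|U})$).

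Theorem~\ref{thm.teoremadequeitre} then produces a natural isomorphism
\[ \Def_{TW(\sU,\DER^*_{\Oh_U}(\sR,\sR))}(A) \;=\; \pi_0\bigl(\operatorname{tot}(\overline{\operatorname{Del}}_{\mathfrak{g}^\Delta})(A)\bigr). \]
Because each local Deligne groupoid is equivalent to a set, Example~\ref{ex.totdiinsiemi} collapses the right-hand side to the equalizer of
\[ \prod_{i} \Hilb_{Z\cap U_i|U_i}(A) \;\rightrightarrows\; \prod_{i<j} \Hilb_{Z\cap U_{ij}|U_{ij}}(A), \]
which is exactly $\Hilb_{Z|X}(A)$ by the sheaf (descent) property of the Hilbert functor on $\sU$.

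The step I expect to be most delicate is checking that the pointwise equivalences $\overline{\operatorname{Del}}_{L_V} \simeq \Hilb_{Z\cap V|V}$ of Lemma~\ref{lem. DGLA local embed deform} are natural in $V$, so that they upgrade to an equivalence of semicosimplicial groupoids: the coface maps on the DGLA side are induced by restriction of derivations of the local Koszul--Tate algebras, and one must verify that these intertwine correctly with the cocycle describing the gluing of an embedded deformation. Once this naturality is in place, the remaining steps are formal consequences of Theorem~\ref{thm.teoremadequeitre} and Example~\ref{ex.totdiinsiemi}.
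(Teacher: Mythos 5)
Your proposal is correct and follows essentially the same route as the paper's own proof: localize via Lemma~\ref{lem. DGLA local embed deform} on the affine, trivializing intersections, then apply Theorem~\ref{thm.teoremadequeitre} and Example~\ref{ex.totdiinsiemi} to identify $\Def_{TW(\sU,\DER^*_{\Oh_U}(\sR,\sR))}$ with the equalizer computing $\Hilb_{Z|X}$. Your additions are details the paper leaves implicit, notably the verification of the hypothesis $H^j(\mathfrak{g}_n)=0$ for $j<0$ (where, with the paper's shift convention $V[n]^i=V^{i+n}$, one has $L_V\cong\sR(V)^p[-1]$ rather than $\sR(V)^p[1]$, though your conclusion that the cohomology is concentrated in degree $1$ is unaffected).
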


\begin{proof}

By the computations in the local case, for every $i$ the DGLA
$\Gamma(U_i, \DER^*_{\Oh_U}(\sR,\sR))$ controls the embedded deformations
of $Z\cap U_i$ inside $U_i$. A global embedded deformation of $Z$ in $X$ is simply given by
a sequence of embedded deformations of $Z\cap U_i$ inside $U_i$, with isomorphic
restrictions on double intersections $U_{ij}$.
Therefore, to conclude the proof, it is enough  to apply
Theorem~\ref{thm.teoremadequeitre} and Example~\ref{ex.totdiinsiemi}.
\end{proof}

In order to globalize the second local construction, let us consider the graded locally free $\Oh_U$-module $\sE^\vee[1]\oplus \sE^\vee$.
On the graded symmetric $\Oh_U$-algebra $\Sym^*_{\Oh_U}(\sE^\vee[1]\oplus \sE^\vee)$, we consider the unique differential induced by the map of degree +1
\begin{equation}\label{equ.differenzialesuesse}
\sE^\vee[1]\oplus \sE^\vee\to \Sym^*_{\Oh_U}(\sE^\vee[1]\oplus \sE^\vee),\qquad \sE^\vee[1]\ni x\mapsto f^*(x)-x\in \Oh_U\oplus \sE^\vee.
\end{equation}

Let $\sM=\DER_{\Oh_U}^*(\Sym^*_{\Oh_U}(\sE^\vee[1]\oplus \sE^\vee),\Sym^*_{\Oh_U}(\sE^\vee[1]\oplus \sE^\vee))$  and
$\sM_{\perp}\subset \sM$  the subsheaf of derivations preserving the ideal generated by
$\sE^\vee$. Then, the homotopy fiber of the inclusion $\sM_\perp\to \sM$ is
quasi-isomorphic to $\DER^*_{\Oh_U}(\sR,\sR)$ and  we have  the following result.

\begin{theorem}\label{thm.homotopyfibercontrolligHilb}
In the above set-up, assume that $\sU$ is an affine open cover of $U$, then the infinitesimal embedded deformations of $Z$ in $X$ are controlled by the homotopy fiber of the inclusion  of DGLAs
\[ TW(\sU,\sM_\perp)\;\mapor{\chi}\; TW(\sU,\sM).\]
\end{theorem}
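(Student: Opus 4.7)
The plan is to reduce the statement to Theorem~\ref{teo. dgla globale HILBERT scheme} by sheafifying the local computation of Lemma~\ref{lem. sostituzione DGLa con cono} and combining it with the compatibility of the semicosimplicial Thom--Whitney construction with formation of homotopy fibers of morphisms of DGLAs.

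First, I would sheafify Lemma~\ref{lem. sostituzione DGLa con cono}. Over each affine open $U_i\in\sU$ that trivializes $\sE$, a choice of basis identifies $\Gamma(U_i,\Sym^*_{\Oh_U}(\sE^\vee[1]\oplus\sE^\vee))$ with the DG-algebra $S=P[z_1,\ldots,z_p,y_1,\ldots,y_p]$ of the local case, where $P=\Gamma(U_i,\Oh_U)$ and the differential in \eqref{equ.differenzialesuesse} becomes $\delta y_i=f_i-z_i$, $\delta z_i=0$. Under this identification, the inclusion $\Gamma(U_i,\sM_\perp)\hookrightarrow\Gamma(U_i,\sM)$ corresponds precisely to the inclusion $\chi\colon H\hookrightarrow\Der^*_P(S,S)$ of Lemma~\ref{lem. sostituzione DGLa con cono}, which in turn provides a quasi-isomorphism between its homotopy fiber and $\Der^*_P(R,R)=\Gamma(U_i,\DER^*_{\Oh_U}(\sR,\sR))$. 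These local quasi-isomorphisms are canonical in the bundle data, hence compatible with restrictions on multiple intersections, so they assemble into a quasi-isomorphism of semicosimplicial DGLAs attached to the cover $\sU$.

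Second, I would invoke the compatibility of the semicosimplicial Thom--Whitney construction with the homotopy-fiber construction of Section~2. For any morphism of semicosimplicial DGLAs $\psi^\Delta\colon\mathfrak{g}^\Delta\to\mathfrak{h}^\Delta$, the levelwise homotopy fibers $TW(\psi_n)$ form a new semicosimplicial DGLA, and the two natural candidates
\[
TW\bigl(\sU,\,TW(\psi^\Delta)\bigr)\quad\text{and}\quad TW\bigl(TW(\sU,\mathfrak{g}^\Delta)\to TW(\sU,\mathfrak{h}^\Delta)\bigr)
\]
both describe compatible systems of pairs $(l,m(t,dt))$ satisfying the boundary conditions $m(0,0)=0$ and $m(1,0)=\psi(l)$, tensored against two copies of $(A_{PL})_\bullet$ coming from the two distinct simplicial directions; a direct Fubini-type identification exchanges these two directions.

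Combining the two steps and using quasi-isomorphism invariance of the Thom--Whitney functor on semicosimplicial diagrams of quasi-coherent DG-sheaves over affine opens, I obtain
\[
TW\bigl(TW(\sU,\sM_\perp)\to TW(\sU,\sM)\bigr)\;\simeq\;TW\bigl(\sU,\DER^*_{\Oh_U}(\sR,\sR)\bigr),
\]
and Theorem~\ref{teo. dgla globale HILBERT scheme} identifies the right-hand side with a DGLA controlling $\Hilb_{Z|X}$. The step that I expect to require the most care is precisely the interchange of $TW(\sU,-)$ with the homotopy-fiber construction $TW(\chi)$: once written as an equality of limits of double-indexed data it is essentially Fubini, but one must verify that the boundary conditions $m(0,0)=0$, $m(1,0)=\chi(l)$ that cut out the homotopy fiber survive the semicosimplicial totalisation intact.
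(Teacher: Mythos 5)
Your strategy is the paper's own: the published proof consists of the single remark that this theorem is the global version of Lemma~\ref{lem. sostituzione DGLa con cono} in the same way that Theorem~\ref{teo. dgla globale HILBERT scheme} is the global version of Lemma~\ref{lem. DGLA local embed deform}, and your three steps (sheafifying Lemma~\ref{lem. sostituzione DGLa con cono} via the canonical morphism $\sM_\perp\to\DER^*_{\Oh_U}(\sR,\sR)$, commuting $TW(\sU,-)$ with the homotopy-fiber construction, and concluding by quasi-isomorphism invariance together with Theorem~\ref{teo. dgla globale HILBERT scheme}) are exactly what that remark leaves implicit. The approach is therefore right; the one point needing repair is the interchange step, and the subtlety there is not the one you identify.

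The boundary conditions are harmless: they are $\K$-linear conditions and tensoring over a field is exact, so $(A_{PL})_n\otimes TW(\psi_n)$ is again cut out inside $\bigl((A_{PL})_n\otimes\mathfrak{g}_n\bigr)\times\bigl((A_{PL})_n\otimes\mathfrak{h}_n[t,dt]\bigr)$ by the same equations. What fails is the Fubini identification itself when the cover $\sU$ is infinite: $-\otimes\K[t,dt]$ does not commute with the product over \v{C}ech degrees, so an element of $TW(\sU,\mathfrak{h}^\Delta)\otimes\K[t,dt]$ has $t$-degree bounded uniformly in the \v{C}ech index, whereas a \v{C}ech-compatible family in $TW\bigl(\sU,TW(\psi^\Delta)\bigr)$ need not. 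Hence the natural map $TW(\chi)\to TW\bigl(\sU,TW(\psi^\Delta)\bigr)$ is in general only a strict inclusion, not an isomorphism. It is nevertheless a quasi-isomorphism of DGLAs, which is all you need, since quasi-isomorphic DGLAs have the same deformation functor: both sides surject onto $TW(\sU,\mathfrak{g}^\Delta)$ (a linear section is $l\mapsto(l,t\chi(l))$, applied levelwise on the right-hand side), with kernels $TW(\sU,\mathfrak{h}^\Delta)\otimes\K_{01}$ and $TW(\sU,\mathfrak{h}^\Delta\otimes\K_{01})$ respectively, where $\K_{01}=\{q\in\K[t,dt]\mid q(0,0)=q(1,0)=0\}$ is homotopy equivalent to $\K[-1]$; the inclusion respects these two short exact sequences, the induced map on kernels is a quasi-isomorphism because both kernels receive $TW(\sU,\mathfrak{h}^\Delta)[-1]$ by tensoring with, respectively applying $TW(\sU,-)$ to, that homotopy equivalence, and the five lemma concludes. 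Alternatively, since $U$ is quasi-compact one may work with a finite trivializing affine cover, for which all products in the \v{C}ech direction are finite and your identification is an honest isomorphism. With this repair your proof is complete and coincides with the paper's intended argument.
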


\begin{proof}
This is the global version of Lemma~\ref{lem. sostituzione DGLa con cono}, as Theorem~\ref{teo. dgla globale HILBERT scheme} is the global version of Lemma \ref{lem. DGLA local embed deform}.
\end{proof}

\medskip
\subsection{Relation with DG-schemes}\label{subsec.relationDGschemes}

According to \cite{ciokap}, a \emph{DG-scheme} is a pair
$(T, \sR_T)$, where $T$ is an ordinary scheme
and $\sR_T$ is a sheaf of ($\mathbb{Z}_{\le 0}$-graded)
commutative DG-algebras on $T$, such that $\sR_T^0=\Oh_{T}$
and each $\sR_T^i$ is quasi-coherent over $\Oh_T$.
A \emph{morphism  of DG-schemes} is just a morphism of DG-ringed
spaces.

A \emph{closed embedding of DG-schemes} is a morphism  $f\colon (Y,\sR_Y)\to (T,\sR_T)$
such that $f\colon Y\to T$ is a closed embedding of schemes and the induced map
$\sR_T\to f_*\sR_Y$ is surjective.

Any ordinary scheme can be
be considered as a DG-scheme with trivial grading and differential; any ordinary closed subscheme of $T$ can be considered as a closed DG-subscheme of $(T,\sR_T)$.

For any DG-scheme $(T, \sR_T)$, its differential  $\delta\colon \sR_T^i\to\sR_T^{i+1}$
is $\Oh_{T}$-linear and hence $\sH^i (\sR_T)$
are quasi-coherent sheaves on $T$. We define \emph{the degree 0 truncation}
$\pi_0(T, \sR_T)$ as the ordinary closed subscheme of $T$ defined by the ideal
$\delta(\sR_T^{-1})$, and then
with structure sheaf $\sH^0(\sR_T)$.

A \emph{quasi-isomorphism of DG-schemes} is a morphism  $f\colon (Y,\sR_Y)\to (T,\sR_T)$
such that the induced map $\pi_0(Y,\sR_Y)\to \pi_0(T,\sR_T)$ is a isomorphism of schemes and
$\sH^*(\sR_T)\to f_*\sH^*(\sR_Y)$ is an isomorphism of graded sheaves.

It is useful  to see the constructions of Subsection~\ref{subsec.global} in the framework of DG-schemes; more precisely, we will describe two DG-schemes which are quasi-isomorphic to $Z$ and $U$, respectively.

The first DG-scheme is just the pair $(U,\sR)$, defined in the previous subsection; in this case, we have
$\pi_0(U,\sR)=Z$ and the closed embedding $(Z, \Oh_Z) \to (U,\sR)$ is a quasi-isomorphism of DG-schemes.

The other DG-scheme is $(E,\sS)$, where $\pi\colon E\to U$ is the total space of the vector bundle associated with $\sE$ (i.e., $E=\Spec (\Sym^*_{\Oh_U}(\sE^\vee)$) and
$\sS=\Sym^*_{\Oh_E}(\pi^*\sE^\vee[1])$, with the differential on $\sS$  induced by the
morphism as in Equation~\eqref{equ.differenzialesuesse}. Notice that
$\pi_*\sS=\Sym^*_{\Oh_U}(\sE^\vee[1]\oplus \sE^\vee)$ and, by \eqref{equ.surjectyivequasiiso}, the section $f\colon U\to E$ gives a closed embedding and a quasi-isomorphism $(U,\Oh_U)\to (E,\sS)$, while  the zero section $0\colon U\to E$ induces a closed embedding
$(U,\sR)\to (E,\sS)$.

Therefore, we have the following commutative diagram of closed embeddings of DG-schemes
\[
\xymatrix{
(Z, \Oh_Z)\ar[d]\ar[r]^{\;qiso\;}&(U,\sR)\ar[d]   \\
(U,\Oh_U) \ar[r]^{\;qiso\;}&(E,\sS) }
\]

Notice that we have  the following natural isomorphisms of DG-sheaves
\[\sM=\DER^*_{\Oh_U}(\pi_*\sS,\pi_*\sS)=\pi_*\DER^*_{\Oh_U}(\sS,\sS),
\qquad\sM_{\perp}=\pi_*\{\phi\in \DER^*_{\Oh_U}(\sS,\sS)\mid \phi(\sI)\subset \sI\},\]
where $\sI$ is the  ideal sheaf of the closed embedding
$(U,\sR)\to (E,\sS)$.

\bigskip
\section{$L_\infty$ morphisms and obstructions}

We briefly recall the notion of  $L_\infty$-algebras and  $L_\infty$ morphisms.
For a more detailed
description of such structures we refer to
\cite{SchSta,LadaStas,LadaMarkl,EDF,fukaya,K,getzler,CCK,cone} and
\cite[Chapter~IX]{manRENDICONTi}.

\begin{definition}
An $L_{\infty}$ structure  on a graded vector space $V$ is a
sequence $\{q_k\}_{k \geq 1}$ of linear maps $q_k \in \Hom^1_{\K}(\bigodot^k
(V[1]),V[1])$ such that the coderivation
\[Q:\overline{\bigodot}^*(V[1]) \to \overline{\bigodot}^*(V[1]),\]
defined as
\[Q(v_1 \odot \cdots \odot  v_n)=\sum_{k=1}^n \sum_{\sigma \in
S(k,n-k)} \epsilon(\sigma)q_k( v_{\sigma(1)} \odot \cdots \odot
v_{\sigma(k)})\odot v_{\sigma(k+1)} \odot \cdots \odot
v_{\sigma(n)},\]
satisfies $QQ=0$, i.e., if $Q$ is a codifferential on the reduced symmetric graded coalgebra;
here $\epsilon(\sigma)$ denotes the Koszul sign and $S(k,n-k)$ is the set of \emph{unshuffles} of type $(k,n-k)$, i.e.,  the set
of permutations $\sigma$ such that
$\sigma(1)<\sigma(2) < \cdots < \sigma(k)$ and $\sigma(k+1)
<\sigma(k+2) < \cdots < \sigma(n)$.

An $L_{\infty}$-algebra $(V,q_1,q_2,q_3, \ldots )$ is the data of a graded vector space $V$ and an $L_{\infty}$ structure $\{q_i\}$ on it.
\end{definition}

Notice that if $(V,q_1,q_2,q_3, \ldots )$  is an $L_{\infty}$-algebra, then
$q_1q_1=0$ and therefore $(V[1],q_1)$ is a DG-vector space.

\begin{example}[Quillen construction, \cite{Qui}]\label{oss DGLA is L infinito}
Let $(L,d,[\, , \, ])$ be a differential graded Lie algebra and define:
\[
q_1=-d: L[1] \to L[1],\]
\[q_2 \in \Hom^1_{\K}({\bigodot}^2 (L[1]),L[1]),\qquad
q_2(v\odot w)=(-1)^{\bar{v}}[v,w],
\]
and $q_k=0$ for every $k \geq 3$. Then, $(L,q_1,q_2,0,\ldots)$ is
an $L_{\infty}$-algebra.
Explicitly, in this case, the differential $Q$ is given by

$$
Q(v_1 \odot \cdots \odot  v_n)=  \!\! \sum_{\sigma \in
S(1,n-1)} \!\! \epsilon(\sigma)d( v_{\sigma(1)})  \odot \cdots \odot
v_{\sigma(n)}
+   \!\! \sum_{\sigma \in
S(2,n-2)}  \!\! \epsilon(\sigma)q_2( v_{\sigma(1)} \odot
v_{\sigma(2)})\odot v_{\sigma(3)} \odot \cdots \odot
v_{\sigma(n)}
$$

\end{example}

There exist two equally important notions of morphisms of $L_{\infty}$ structures: the linear morphisms (graphically denoted with a solid arrow) and the
$L_{\infty}$ morphisms (denoted with a dashed arrow).

A \emph{linear morphism} $f\colon (V,q_1,q_2, \ldots
)\rightarrow (W,p_1,p_2, \ldots )$ of $L_{\infty}$-algebras is a
morphism of graded vector spaces $f\colon V[1]\to W[1]$ such that
$f\circ q_n=p_n\circ (\odot^n f)$, for every $n>0$.

An \emph{$L_{\infty}$ morphism} $f_{\infty}\colon (V,q_1,q_2, \ldots
)\dashrightarrow (W,p_1,p_2, \ldots )$ of $L_{\infty}$-algebras is the data of a
sequence of morphisms
\[
f_n\in \Hom_{\K}^0({\bigodot}^n (V[1]), W[1]), \qquad n\geq 1,
\]
such that the unique morphism of graded coalgebras
\[
F\colon \overline{\bigodot}^* (V[1]) \to \overline{\bigodot}^* W[1]
\]
lifting $\sum_n f_n :\overline{\bigodot}^* (V[1])  \to
W[1]$, commutes with the codifferentials.
This condition implies that the \emph{linear part} $f_1\colon V[1]\to W[1] $ of an
$L_{\infty}$ morphism $f_{\infty}\colon (V,q_1,q_2, \ldots )\dashrightarrow
(W,p_1,p_2, \ldots )$ satisfies the condition $f_1 \circ q_1
=p_1 \circ f_1$,  and therefore $f_1\colon (V[1],q_1) \to (W[1], p_1)$ is a morphism of
DG-vector spaces.

\begin{remark} Every linear morphism of $L_{\infty}$-algebras is also an $L_{\infty}$ morphism, conversely
an $L_{\infty}$ morphism $f_{\infty}=\{f_n\}$ is  linear
if and only if $f_n=0$, for every $n \geq 2$.
An $L_{\infty}$ morphism between two DGLAs is linear if and only if
it is a morphism of differential graded Lie algebras.
\end{remark}

\begin{lemma} \label{lemma.basechange}
Let $L$ and $M$ be differential graded Lie algebras,  $f_{\infty}\colon L \dashrightarrow M$ an  $L_\infty$ morphism between $L$ and $M$ and $A$ a commutative DG-algebra.  Then,
$f_{\infty}$ induces canonically an $L_\infty$ morphism
\[{(f_A)} _{\infty}\colon L\otimes A \dashrightarrow M \otimes A.\]

\end{lemma}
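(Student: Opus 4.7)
The plan is to define the Taylor coefficients of $(f_A)_\infty$ by $A$-multilinear extension of the coefficients of $f_\infty$, with Koszul signs, and then reduce the verification of the $L_\infty$ morphism equation to that of the original $f_\infty$ using graded commutativity of $A$.

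First I would fix the standard décalage identification $(L \otimes A)[1] \cong L[1] \otimes A$ of graded vector spaces, and similarly for $M$. The DGLA structures on $L \otimes A$ and $M \otimes A$ are the standard ones, with differential $d(x \otimes a)=dx \otimes a+(-1)^{\bar x}x\otimes da$ and bracket $[x \otimes a, y \otimes b]=(-1)^{\bar a\,\bar y}[x,y]\otimes ab$; these translate, via Quillen's construction (Example~\ref{oss DGLA is L infinito}), to $L_\infty$ structures $\{q^A_k\}$ and $\{p^A_k\}$ obtained from $\{q_k\}$ and $\{p_k\}$ by $A$-multilinear extension. Then I would define
\[
(f_A)_n\bigl((x_1 \otimes a_1) \odot \cdots \odot (x_n \otimes a_n)\bigr)
= \epsilon(x,a)\, f_n(x_1 \odot \cdots \odot x_n)\otimes a_1 a_2 \cdots a_n,
\]
where $\epsilon(x,a)$ is the Koszul sign accumulated by permuting the $a_i$ past the $x_j$ with $j>i$.

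Next, I would check that $(f_A)_n$ is graded symmetric, so that it is a well defined element of $\Hom_{\K}^0(\bigodot^n((L\otimes A)[1]),(M\otimes A)[1])$. Under the transposition of two adjacent entries, the sign produced on the left combines the gradings in both $L$ and $A$; on the right, the graded commutativity of $A$ provides the $A$-part of the sign, the symmetry of $f_n$ provides the $L$-part, and the difference is absorbed into the change of the prefactor $\epsilon(x,a)$. The standard sign convention makes this an identity.

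Finally, I would verify the $L_\infty$ morphism equations
\[
\sum_{k+l=n+1}\!\!\sum_{\sigma\in S(k,n-k)}\!\!\pm\,(f_A)_l\bigl(q^A_k(\cdot)\odot\cdot\bigr)
=\sum \pm\, p^A_j\bigl((f_A)_{i_1}\odot\cdots\odot(f_A)_{i_j}\bigr).
\]
Because $q^A_k$, $p^A_j$ and every $(f_A)_m$ are built by the same recipe (apply the ``pure'' operation on the $L$ or $M$ factors, multiply the $A$ factors, insert a Koszul sign from shuffling), evaluation on $(x_1\otimes a_1,\ldots,x_n\otimes a_n)$ yields on both sides an expression of the form $(\text{Koszul sign})\cdot (\text{something in } M)\otimes a_1\cdots a_n$. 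After collecting the $A$-factor into the common product $a_1\cdots a_n$ using graded commutativity, the surviving identity in $M$ is precisely the $L_\infty$ morphism equation satisfied by $f_\infty$ on $(x_1,\ldots,x_n)$. The construction is manifestly canonical and functorial in $A$.

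The only real obstacle is sign bookkeeping: one must set up a single sign convention (say, the one built into the décalage and into the explicit form of the $L_\infty$ equations) and keep it throughout steps 2 and 3. Once this is in place, no new algebraic content is required beyond the fact that $A$ is graded commutative, which is exactly what is used to commute the $A$-factors past the $L$- and $M$-factors and to reorder them freely.
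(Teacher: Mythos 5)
Your construction of $(f_A)_n$ is exactly the one in the paper: there, $(f_A)_n$ is defined as the composition $\bigodot^n((L\otimes A)[1]) \xrightarrow{\;\mu\;} \bigodot^n(L[1])\otimes A \xrightarrow{\;f_n\otimes \mathrm{Id}_A\;} M[1]\otimes A \simeq (M\otimes A)[1]$, with $\mu$ given by multiplication in $A$ and the Koszul sign rule --- precisely your formula with the prefactor $\epsilon(x,a)$ --- and the verification of the $L_\infty$ equations is then declared ``completely straightforward''. So in approach you coincide with the paper, and you are in fact more explicit than the source.

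There is, however, one concrete flaw in your verification sketch. The $L_\infty$ structure $\{q^A_k\}$ on $L\otimes A$ is \emph{not} obtained from $\{q_k\}$ by $A$-multilinear extension: while $q^A_2$ is, the differential $q^A_1$ contains, by your own formula $d(x\otimes a)=dx\otimes a+(-1)^{\bar x}x\otimes da$, an extra summand $\pm\, 1\otimes d_A$ which is not the extension of any operation on $L$ (and likewise for $p^A_1$). Consequently, evaluating the two sides of the morphism equation does \emph{not} produce only terms of the form $(\text{Koszul sign})\cdot(\text{element of }M)\otimes a_1\cdots a_n$: both sides contain terms $\pm\, f_n(x_1\odot\cdots\odot x_n)\otimes a_1\cdots (d_A a_i)\cdots a_n$, and these do not reduce to the $L_\infty$ equation for $f_\infty$; they must instead cancel against each other, the source-side terms coming from $1\otimes d_A$ inside $q^A_1$ matching the target-side term $p^A_1\circ (f_A)_n$ expanded via the Leibniz rule $d_A(a_1\cdots a_n)=\sum_i\pm\, a_1\cdots(d_A a_i)\cdots a_n$. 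So besides graded commutativity of $A$ you genuinely need that $d_A$ is a derivation, and this point is not vacuous in this paper: the lemma is applied with $A=(A_{PL})_m$, the polynomial differential forms, whose differential is nonzero. Once this cancellation is incorporated, your argument closes and agrees with the paper's proof.
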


\begin{proof}
By definition, $f_{\infty}\colon L \dashrightarrow M$ corresponds to a
sequence $\{f_n\}_n$ of
degree zero linear maps, with
$f_n: \bigodot^n  (L[1]) \to M[1]$. Then, for any   associative
graded commutative algebra  $A$,  define the natural extensions
\[ {(f_A)}_n\colon  {\bigodot}^n ((L\otimes A)[1] ) \to (M\otimes A)[1]\]
 as the composition
\[  {\bigodot}^n ((L\otimes A)[1]) \mapor{\mu}
  {\bigodot}^n  (L  [1]) \otimes A  \xrightarrow{f_n\otimes Id_A} M[1]\otimes A
\simeq (M\otimes A)[1],\]%
where $\mu$ is induced by  multiplication on $A$ and by the Koszul rule of signs. The proof that
this induces an $L_\infty$ morphism is completely straightforward.
\end{proof}

 \begin{proposition}\label{prop. morfismo L-infinito limit}
Let $\mathbf{D}$ be a small category, $\mathbf{DGLA}$ the category of differential graded Lie algebras and  $H, G\colon  \mathbf{ D} \to \mathbf{DGLA}$
two functors; denote by $\lim (H)$ and $\lim ( G)$ their limits.
Then, every $L_{\infty}$ natural transformation $H\dashrightarrow G$ induces
an $L_\infty$ morphism
\[f_\infty\colon\lim (H) \dashrightarrow
\lim(G).\]
\end{proposition}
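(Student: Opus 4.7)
The plan is to exploit the fact that limits in $\mathbf{DGLA}$ are computed componentwise on the underlying graded vector spaces, together with the full naturality built into the notion of $L_\infty$ natural transformation. An $L_\infty$ natural transformation $\tau\colon H\dashrightarrow G$ should be understood as a family of $L_\infty$ morphisms $\tau^d\colon H(d)\dashrightarrow G(d)$, one for each object $d\in \mathbf{D}$, such that for every arrow $\alpha\colon d\to d'$ in $\mathbf{D}$, the Taylor components $\tau^d_n$ intertwine with the $\mathbf{DGLA}$ morphisms $H(\alpha)$ and $G(\alpha)$, namely
\[
G(\alpha)\circ \tau^d_n = \tau^{d'}_n\circ \bigodot^n H(\alpha)[1],\qquad n\ge 1.
\]
Recall that $H(\alpha)$ and $G(\alpha)$ are linear $L_\infty$ morphisms, so this is exactly the condition that the induced morphisms of reduced symmetric coalgebras commute with the coalgebra maps induced by $H(\alpha)$ and $G(\alpha)$.

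Next, I would recall that $\lim(H)$ can be identified with the graded sub-vector space of $\prod_{d\in\mathbf{D}}H(d)$ of compatible families $(x_d)_{d\in\mathbf{D}}$, namely those for which $H(\alpha)(x_d)=x_{d'}$ for every arrow $\alpha\colon d\to d'$, endowed with the componentwise differential and bracket; the analogous description holds for $\lim(G)$. Using this, for each $n\ge 1$ I define
\[
f_n\colon \bigodot^n \lim(H)[1]\to \lim(G)[1],\qquad
\bigl(f_n(x^{(1)}\odot\cdots\odot x^{(n)})\bigr)_d := \tau^d_n\bigl(x^{(1)}_d\odot\cdots\odot x^{(n)}_d\bigr).
\]
Well-definedness of $f_n$, i.e., the fact that the family $(\tau^d_n(\cdots))_d$ is compatible with the transition maps of the diagram $G$, is exactly the naturality condition displayed above; hence $f_n$ factors through the subspace $\lim(G)[1]\subset \prod_d G(d)[1]$.

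It remains to verify that the collection $\{f_n\}_{n\ge 1}$ is an $L_\infty$ morphism from $\lim(H)$ to $\lim(G)$. Let $F$ be the unique morphism of graded coalgebras $\overline{\bigodot}^*\lim(H)[1]\to \overline{\bigodot}^*\lim(G)[1]$ lifting $\sum_n f_n$, and denote by $Q_H$, $Q_G$ the codifferentials on $\lim(H)$ and $\lim(G)$ induced by the DGLA structures as in Example \ref{oss DGLA is L infinito}. Similarly, denote by $F^d$ the coalgebra morphism associated with $\tau^d$. For each $d$, the projection $\pi_d\colon \lim(H)\to H(d)$ is a morphism of DGLAs, and by construction $\pi_d\circ f_n = \tau^d_n \circ \bigodot^n \pi_d[1]$; at the level of coalgebras this reads $\pi_d \circ F = F^d\circ \pi_d$, where I use the same symbol $\pi_d$ for the induced coalgebra morphism. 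Since each $\tau^d$ is by hypothesis an $L_\infty$ morphism, $F^d$ commutes with the codifferentials on $H(d)$ and $G(d)$; projecting the equation $Q_G F - F Q_H =0$ to each $G(d)$ factor reduces it to $Q_{G(d)}F^d\pi_d - F^d Q_{H(d)}\pi_d=0$, which is precisely the $L_\infty$ morphism equation for $\tau^d$ composed with the projection. Since $\lim(G)[1]$ embeds as a graded subspace of $\prod_d G(d)[1]$, and the analogous embedding holds for the reduced symmetric coalgebras, the vanishing on every component implies $Q_G F = F Q_H$.

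The main obstacle, if any, is purely notational: one has to keep track of the fact that the coalgebra lifting, the codifferential, and the projection all interact correctly with infinite products and subspaces, but since all of these operations are pointwise in $d$ there is no substantive difficulty. No convergence issue arises because the relations $QQ=0$ and $QF=FQ$ are, on each homogeneous summand $\bigodot^n$, finite sums of finite compositions, which commute with products and subspace inclusions of graded vector spaces.
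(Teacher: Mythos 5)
Your construction of the candidate morphism is sound, and up to the last step your route is a legitimate hands-on alternative to the paper's argument: limits in $\mathbf{DGLA}$ are indeed computed on underlying graded vector spaces, the componentwise definition of $f_n$ lands in $\lim(G)[1]$ precisely by the naturality you state, and the identity $F^{\pi_d}\circ F=F^d\circ F^{\pi_d}$ of coalgebra morphisms (writing $F^{\pi_d}$ for the strict coalgebra morphism induced by a projection) holds because both sides have the same corestriction to cogenerators. The genuine gap is your final claim that, since $\lim(G)[1]$ embeds into $\prod_d G(d)[1]$, ``the analogous embedding holds for the reduced symmetric coalgebras'', i.e.\ that the map
\[
\overline{\bigodot}^{\,*}\lim(G)[1]\longrightarrow \prod_d \overline{\bigodot}^{\,*}G(d)[1]
\]
is injective. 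This is false. Take $\mathbf{D}$ discrete with two objects and $G(1)=G(2)=\K$, abelian with zero differential, so that $\lim(G)=\K e_1\times\K e_2$. Then $e_1\odot e_2\in\bigodot^2\lim(G)[1]$ is annihilated by both induced coalgebra maps, since $\pi_1(e_2)=\pi_2(e_1)=0$; if the $e_i$ have odd degree in $G(d)[1]$, this element spans all of $\bigodot^2$, so an entire graded piece dies in every component. Hence componentwise vanishing of $Q_GF-FQ_H$, in your sense of composing with the induced coalgebra projections, does not by itself yield $Q_GF-FQ_H=0$. Note that this is exactly the subtlety the paper's own proof flags: Quillen's construction commutes with limits \emph{as a functor into locally nilpotent DG-coalgebras}, but the forgetful functor from coalgebras to graded vector spaces does not commute with limits, so the coalgebra of the limit is not the ``subcoalgebra of the product'' that your argument implicitly invokes.

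The gap is repairable while keeping your strategy. The map $N=Q_GF-FQ_H$ is a coderivation along the coalgebra morphism $F$, and a coderivation along a morphism of reduced symmetric coalgebras vanishes if and only if its corestriction to cogenerators $p\circ N\colon \overline{\bigodot}^{\,*}\lim(H)[1]\to\lim(G)[1]$ vanishes. That corestriction takes values in $\lim(G)[1]$, where the embedding into $\prod_d G(d)[1]$ \emph{is} available; for each $d$ one computes, exactly along the lines you indicate, that $\pi_d\circ p\circ N$ equals the corestriction of $\bigl(Q_{G(d)}F^d-F^dQ_{H(d)}\bigr)$ precomposed with the coalgebra morphism induced by $\lim(H)\to H(d)$, which vanishes because each $\tau^d$ is an $L_\infty$ morphism. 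Alternatively, bypass coalgebras entirely: the $L_\infty$ relations among the Taylor coefficients (as in Lemma~\ref{lem mor infinito di DGLA}, or their general-$n$ analogues) are equations of multilinear maps with values in $\lim(G)[1]$, so they can be checked componentwise. With either repair, your proof becomes a correct, concrete counterpart to the paper's one-line categorical argument; what it loses in brevity it gains in making explicit why the coalgebra-level difficulty is harmless once one works at the level of cogenerators.
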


\begin{proof}
An $L_{\infty}$ natural transformation is just a sequence
\[(f_d)_\infty \colon
  H(d)    \dashrightarrow  G(d),\qquad d\in D,\]
of $L_\infty$ morphisms, such that for every morphism $\gamma\colon d\to d'$ in $D$ the
diagram
\[ \xymatrix{H(d)\ar@{-->}[r]^{(f_d)_\infty}\ar[d]^{H(\gamma)}&G(d)\ar[d]^{G(\gamma)}\\
H(d')\ar@{-->}[r]^{(f_{d'})_\infty}&G(d')}\]
is commutative in the category of $L_{\infty}$-algebras. Then,  the conclusion follows immediately from the fact that the Quillen's construction, considered as a functor from the category of DGLAs to the category of locally nilpotent differential graded coalgebras, commutes with limits.
Notice that the forgetful functor from coalgebras to vector spaces does not
commute with limits.
\end{proof}

\begin{corollary}\label{cor. morfismo infinito tra tot}
Let $\mathfrak{g}^\Delta$ and $\mathfrak{h}^\Delta$ be two semicosimplicial DGLAs.
Then every semicosimplicial $L_\infty$ morphism
$\mathfrak{g}^\Delta \dashrightarrow \mathfrak{h}^\Delta$ induces an $L_{\infty}$ morphism
\[{TW}({\mathfrak
g}^\Delta)\dashrightarrow  {TW}({\mathfrak
h}^\Delta). \]
\end{corollary}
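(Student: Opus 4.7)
The strategy is to realize both $TW(\mathfrak{g}^\Delta)$ and $TW(\mathfrak{h}^\Delta)$ as limits of the same shape in the category $\mathbf{DGLA}$, and then invoke Proposition~\ref{prop. morfismo L-infinito limit}. By inspection of its defining equations, $TW(\mathfrak{g}^\Delta)$ is the equalizer, in $\mathbf{DGLA}$, of the two parallel families of DGLA morphisms
\[
\prod_{n\ge 0}(A_{PL})_n\otimes \mathfrak{g}_n \;\mapor{\;\delta^\ast\otimes \operatorname{Id}\;,\;\operatorname{Id}\otimes \partial_\ast\;}\; \prod_{n\ge 1,\,0\le k\le n}(A_{PL})_{n-1}\otimes \mathfrak{g}_{n}.
\]
This is legitimate: since every $(A_{PL})_n$ is a commutative DG-algebra, the tensor products $(A_{PL})_n\otimes \mathfrak{g}_n$ are DGLAs in a natural way, and the two families of arrows are morphisms of DGLAs. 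Encode this equalizer diagram as a functor $H\colon \mathbf{D}\to \mathbf{DGLA}$, where $\mathbf{D}$ is the evident small category (two columns of objects, with two families of parallel arrows from the first column to the second); then $\lim H=TW(\mathfrak{g}^\Delta)$. Repeating the construction for $\mathfrak{h}^\Delta$ yields a second functor $G\colon \mathbf{D}\to \mathbf{DGLA}$ with $\lim G=TW(\mathfrak{h}^\Delta)$.

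Next, construct an $L_\infty$ natural transformation $H\dashrightarrow G$. Writing $(f_n)_\infty\colon \mathfrak{g}_n\dashrightarrow \mathfrak{h}_n$ for the components of the given semicosimplicial $L_\infty$ morphism, Lemma~\ref{lemma.basechange} applied to the commutative DG-algebra $A=(A_{PL})_n$ produces an $L_\infty$ morphism
\[
(f_n^{PL})_\infty\colon (A_{PL})_n\otimes \mathfrak{g}_n\dashrightarrow (A_{PL})_n\otimes \mathfrak{h}_n,
\]
and these serve as the components at the objects of the first column of $\mathbf{D}$; analogous base-changes of $(f_n)_\infty$ along $(A_{PL})_{n-1}$ give the components at the objects of the second column. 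The naturality squares to be checked split into two types. Those associated with $\operatorname{Id}\otimes \partial_k$ commute because the $\{(f_n)_\infty\}$ form a semicosimplicial $L_\infty$ morphism, i.e.\ each $(f_n)_\infty$ intertwines the two coface maps $\partial_k$ in the category of $L_\infty$-algebras. Those associated with $\delta^k\otimes \operatorname{Id}$ commute by the functoriality of the base-change construction in Lemma~\ref{lemma.basechange} with respect to morphisms $(A_{PL})_n\to(A_{PL})_{n-1}$ of commutative DG-algebras: this is a direct consequence of the explicit Koszul-sign formula defining $(f_n^{PL})_k$, in which the algebra factor is moved past the Lie factor by bare multiplication.

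Applying Proposition~\ref{prop. morfismo L-infinito limit} to the pair $(H,G)$ and to the $L_\infty$ natural transformation just assembled yields the desired $L_\infty$ morphism $\lim H\dashrightarrow \lim G$, i.e.\ $TW(\mathfrak{g}^\Delta)\dashrightarrow TW(\mathfrak{h}^\Delta)$. The only nontrivial step is the verification of naturality with respect to the maps $\delta^k\otimes \operatorname{Id}$; this is a bookkeeping check using the explicit formula for base change, and presents no conceptual difficulty once the equalizer description of $TW$ has been set up.
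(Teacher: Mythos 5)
Your proposal is correct and takes essentially the same route as the paper: the paper's proof likewise applies Lemma~\ref{lemma.basechange} with $A=(A_{PL})_n$ and then invokes Proposition~\ref{prop. morfismo L-infinito limit} by ``interpreting TW as an end, and then as a limit,'' of which your explicit equalizer diagram is precisely the unpacked form. The naturality checks you spell out (semicosimplicial compatibility for the $\operatorname{Id}\otimes\partial_k$ squares, functoriality of base change for the $\delta^k\otimes\operatorname{Id}$ squares) are exactly the details the paper leaves implicit.
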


\begin{proof}
By definition,  a semicosimplicial $L_{\infty}$ morphism is a sequence of $L_\infty$ morphisms
$\{(f_d)_\infty:  \mathfrak{g}_d  \dashrightarrow \mathfrak{h}_d  \}$, $d\ge 0$, commuting
 with coface maps. By definition, we have
\[
 TW (\mathfrak{g}^\Delta) = \{ (x_n) \in \prod_n   (A_{PL})_n \otimes \mathfrak{g}_n
\mid ( \delta^{k} \otimes Id)x_n=
(Id \otimes \partial_{k})x_{n-1}\;\forall n,k\}.
\]

Then, applying Lemma \ref{lemma.basechange}, every $(f_n)_\infty$ induces
$L_\infty$ morphisms $   (A_{PL})_m \otimes \mathfrak{g}_n   \dashrightarrow (A_{PL})_m
\otimes \mathfrak{h}_n  $.  Finally, it is enough to apply the previous
Proposition \ref{prop. morfismo L-infinito limit} interpreting TW as
an end, and then as a limit.
\end{proof}

\begin{example}\label{ex.quadrato.morfismo.infinito}
Let $\chi\colon L\to M$ and $\eta\colon L'\to M'$ be morphisms of
differential graded Lie algebras. Then, every
commutative diagram
\[ \begin{array}{ccc}
 L &  \stackrel{f}{\dashrightarrow}&L' \\
\mapver{\chi}&&\mapver{\eta}\\
M & \stackrel{f'}{\dashrightarrow}&M' \end{array}\]
with $f$ and $f'$ $L_\infty$ morphisms,
induces   an $L_\infty$ morphism ${TW}(\chi) \dashrightarrow {TW}(\eta)$.

\end{example}

\begin{remark}\label{rem.morphismidecalati}
Via the standard d\'ecalage isomorphisms $\bigodot^n(V[1])\simeq (\bigwedge^n V)[n]$,
every $L_{\infty}$
morphism  $f_\infty \colon V \dashrightarrow W$, associated with a sequence of maps
$f_n:\bigodot^n (V[1] ) \to W[1]$ can be described by a sequence of linear maps
\[ g_n\colon \bigwedge^n V\to W,\qquad n\ge 1,\quad \deg(g_n)=1-n.\]
\end{remark}

Every $L_{\infty}$-morphism  $f_{\infty}\colon L \dashrightarrow M$ of DGLAs induces morphisms
of the associated Maurer-Cartan and deformation functors
\[ f_\infty\colon \MC_L\to \MC_M,\qquad f_\infty\colon
\Def_L \to \Def_M.\]
Moreover, the linear part $f_1\colon L[1]\to M[1]$ induces a morphism in
cohomology, which is compatible with the obstruction maps of $\Def_L$ and $\Def_M$
in the following sense:
given a DGLA $L$, a small extension in the category $\Art$
\[e:\quad 0\to \K \to B \to A\to 0,\qquad A,B\in \Art,\]
and an element $x\in \MC_L(A)$ we can take a lifting $\tilde{x}\in L^1\otimes \mathfrak{m}_B$
and consider the element $h=d\tilde{x}+[\tilde{x},\tilde{x}]/2\in L^2\otimes\K=L^2$.
It is very easy to show that $dh=0$ and that the cohomology class $[h]\in H^2(L)$
does not depend on the choice of the lifting; therefore, it gives the \emph{obstruction map}
$ob_{e}\colon \MC_L(A)\to H^2(L)$.
It is not difficult to prove (see e.g. \cite{FantMan,ManettiSeattle})
that the obstruction map is gauge invariant, thus giving a map
$ob_{e}\colon \Def_L(A)\to H^2(L)$ such that an element $x\in \Def_L(A)$ lifts to
$\Def_L(B)$ if and only if $ob_e(x)=0$. The obstructions of the functor $\Def_L$
are the elements of $H^2(L)$ lying in the image of some obstruction map; for instance,
 if $L$ is abelian then every obstruction is equal to $0$.
The obstruction maps commute with morphisms of DGLAs and quasi-isomorphic DGLAs
have isomorphic associated deformation functors and the same obstructions. This implies that the set of obstructions is a homotopical invariant of the DGLA.

Given an  $L_{\infty}$ morphism  $L \dashrightarrow M$ of DGLAs,  the
map $H^2(L) \to H^2(M)$, induced by its linear part, commutes with the induced morphism $\Def_L \to \Def_M$ and
obstruction maps \cite{EDF}.
In particular, if $M$ is homotopy abelian, then
the obstructions of the functor $\Def_L$ are contained in the kernel of the induced map
$H^2(L) \to H^2(M)$.

The obstruction maps  exist for every  functor of Artin rings
$F \colon \Art \to\Set$ describing infinitesimal deformation of algebro-geometric structures
and one of the main results of \cite{FantMan} is the proof that obstruction maps exist for every
$F$ as above satisfying some mild Schlessinger type conditions.

\begin{example}
Let $\chi\colon L \to M$ be a morphism of DGLAs, then the obstructions of the functor
$\Def_{TW(\chi)}\colon \Art \to \Set$ are contained in
$H^2(TW(\chi))$. If $\chi$ is injective, then $H^2(TW(\chi))\cong H^1(\coker(\chi))$
(see Remark~\ref{rem.quasiisoTWcono}). Moreover, if $\chi$ is also injective in cohomology
then  $TW(\chi)$ is homotopy abelian (Lemma~\ref{lem.criterion}) and then $\Def_{TW(\chi)}$ has only trivial obstructions.
\end{example}

\begin{example}
Let $X$ be a smooth algebraic variety, over an algebraically
 closed field $\K$ of characteristic 0, and
$Z\subset X$  a locally complete intersection closed subvariety.
Let $\Hilb_{Z|X}\colon \Art \to\Set$ the deformation functor of
infinitesimal embedded deformations of $Z$ in $X$.
Then, it is well known that the obstructions are naturally contained in the
 cohomology vector space  $H^1(Z,N_{Z|X})$ of the normal sheaf of $Z$ in $X$, see e.g
\cite[Prop. 3.2.6]{Sernesi}. This is also recovered in this paper as a consequence of
Theorem~\ref{teo. dgla globale HILBERT scheme}.

\end{example}

\bigskip
\section{Cartan homotopies and $L_{\infty}$ morphisms}

Next, let us recall the notion of Cartan homotopy \cite{cone,Periods}.

\begin{definition}\label{def.cartanhomotopy}
Let $L$ and $M$ be two differential graded Lie algebras. A linear map of degree $-1$
\[ \bi\colon L \to M  \]
is called a \emph{Cartan homotopy} if, for every $a,b\in L$, we
have
\[[\bi_a,\bi_{b}]=0,\qquad \bi_{[a,b]}=[\bi_a,d_M\bi_b].\]
\end{definition}



\begin{lemma}\label{lem.identitadellecartan}
Let $\bi\colon L \to M$ be a Cartan homotopy and consider the degree 0
map
\[ \bl\colon L\to M,\qquad
\bl_a=d_M\bi_a+\bi_{d_L a}.
\]
Then:
\begin{enumerate}

\item $\bl$ is a morphism of DGLAs;

\item $\bi_{[a,b]}=[\bi_a,\bl_b]=(-1)^{\bar{a}}[\bl_a,\bi_b]$;

\item $[\bi_{[a,b]},\bl_c] -(-1)^{\bar{b}\;\bar{c}} [\bi_{[a,c]},\bl_b]+(-1)^{\bar{a}(\bar{b}+\bar{c})} [\bi_{[b,c]},\bl_a] =0$.

\end{enumerate}

\end{lemma}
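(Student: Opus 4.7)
The plan is to tackle part (2) first, since it then feeds directly into the proofs of (1) and (3). The technical engine throughout is the one-line identity
\[
[d_M\bi_a,\bi_b]\;=\;(-1)^{\bar a}[\bi_a,d_M\bi_b],
\]
obtained by applying $d_M$ to the hypothesis $[\bi_a,\bi_b]=0$ and using the graded Leibniz rule (with $d_M^2=0$). This records the fact that anchoring the $d_M$ on either side of a bracket of two $\bi$'s costs only a predictable sign.

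For part (2), I would expand $[\bi_a,\bl_b]=[\bi_a,d_M\bi_b]+[\bi_a,\bi_{d_Lb}]$; the second summand vanishes by hypothesis, and the first is $\bi_{[a,b]}$ by the defining property of a Cartan homotopy. Symmetrically $[\bl_a,\bi_b]=[d_M\bi_a,\bi_b]+[\bi_{d_La},\bi_b]$: again the second summand vanishes, and the first becomes $(-1)^{\bar a}\bi_{[a,b]}$ by the engine identity above.

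For part (1), I first note that $\bl_a$ has degree $\bar a$ and that the chain-map property $d_M\bl_a=\bl_{d_La}$ reduces, after using $d_M^2=0$, to the tautology $d_M\bi_{d_La}=d_M\bi_{d_La}$. The bracket-preserving identity $\bl_{[a,b]}=[\bl_a,\bl_b]$ is where the real work lies: I would compute $\bl_{[a,b]}$ by differentiating $\bi_{[a,b]}=[\bi_a,d_M\bi_b]$ and by expanding $\bi_{d_L[a,b]}$ through Leibniz on $L$ followed by the Cartan identity, and match it against the four-term expansion of $[\bl_a,\bl_b]=[d_M\bi_a+\bi_{d_La},\,d_M\bi_b+\bi_{d_Lb}]$. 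The matching rests precisely on the engine identity applied to the pair $(a,d_Lb)$, together with the vanishing of $[\bi_{d_La},\bi_{d_Lb}]$.

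For part (3), I would apply (2) in each of the three summands to rewrite $[\bi_{[a,b]},\bl_c]=\bi_{[[a,b],c]}$ and similarly for the other two terms; the claim then reduces to showing
\[
[[a,b],c]-(-1)^{\bar b\bar c}[[a,c],b]+(-1)^{\bar a(\bar b+\bar c)}[[b,c],a]=0
\]
inside $L$ and then applying the linear map $\bi$. Using graded skew-symmetry to flip the second and third summands into the form $[b,[a,c]]$ and $[a,[b,c]]$ respectively, the signs align so that the expression becomes exactly the graded Jacobi identity for $L$, and hence vanishes. I expect the main obstacle to be the sign bookkeeping in (1), where one must carefully track four kinds of terms (each slot is $d_M\bi_\bullet$ or $\bi_{d_L\bullet}$) while shuttling $d_M$ across brackets; the remaining steps are essentially formal once (2) is in hand.
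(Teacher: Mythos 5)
Your proposal is correct and follows essentially the same route as the paper: part (3) is handled exactly as in the paper's proof, by using (2) to rewrite each term as $\bi_{[[\cdot,\cdot],\cdot]}$ and then invoking the graded Jacobi identity in the unshuffle form $[[a,b],c]-(-1)^{\bar b\bar c}[[a,c],b]+(-1)^{\bar a(\bar b+\bar c)}[[b,c],a]=0$. Your verifications of (1) and (2), via the sign identity $[d_M\bi_a,\bi_b]=(-1)^{\bar a}[\bi_a,d_M\bi_b]$ obtained from Leibniz applied to $[\bi_a,\bi_b]=0$, are precisely the ``straightforward'' computations the paper omits, and they check out.
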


\begin{proof} The proof of the first two items is straightforward.
For every $a,b$ and $c$, we have
\begin{multline*}
[\bi_{[a,b]},\bl_c] -(-1)^{\bar{b}\;\bar{c}} [\bi_{[a,c]},\bl_b]+(-1)^{\bar{a}(\bar{b}+\bar{c})} [\bi_{[b,c]},\bl_a]\\
=\bi_{[[a,b],c]} -(-1)^{\bar{b}\;\bar{c}} \bi_{[[a,c],b]}+(-1)^{\bar{a}(\bar{b}+\bar{c})} \bi_{[[b,c],a]}=0,
\end{multline*}
where the last  equality follows from
the graded Jacobi identity in $L$:
\begin{equation}\label{equ.Jacobicartanizzato}
[[a,b],c] -(-1)^{\bar{b}\;\bar{c}} [[a,c],b]+(-1)^{\bar{a}(\bar{b}+\bar{c})}[[b,c],a]=0.\end{equation}
Notice that the above form of Jacobi identity can be written as
\[\sum_{\sigma\in S(2,1)}\chi(\sigma)[[a_{\sigma(1)},a_{\sigma(2)}],a_{\sigma(3)}]=0,\]
where $\chi(\sigma)$ is the product of the signature and the Koszul sign
of the unshuffle $\sigma$.

\end{proof}

\begin{lemma}\label{lem mor infinito di DGLA}
Let $= (L,d_L, [ \, , \, ])$ and $= (M,d_M, [ \, , \, ])$ be  DGLAs. Consider the linear morphisms
\[
f_1: L \to M \mbox { and } f_2:  \bigwedge ^2 L \to M,
\]
where $f_1$ has degree zero and $f_2$ has degree $-1$  (see Remark~\ref{rem.morphismidecalati}). Then, the sequence $\{f_1, f_2,0,0, \cdots \}$ is an $L_\infty$ morphism of DGLAs if and only if the following equations are satisfied for every $a,b,c,d\in L$.
\begin{enumerate}
\item $f_1 \circ d_L = d_M \circ f_1$ (i.e., $f_1$ is a morphism of complexes);

\item $[f_1(a),f_1(b)]=f_1([a,b])-d_M \circ f_2(a,b)-f_2(d_L a,b)+(-1)^{\bar{a}\bar{b}} f_2(d_L b,a)$;

\item $f_2([a,b],c)- (-1)^{\bar{b}\bar{c}}f_2([a,c],b) +(-1)^{\bar{a}(\bar{b}+\bar{c})}f_2([b,c],a) =$\\
    $=(-1)^{a}[f_1(a),f_2(b,c)]-[f_2(a,b),f_1(c)]+
    (-1)^{\bar{b}\bar{c}}[f_2(a,c),f_1(b)]$;
\item $[f_2(a,b),f_2(c,d)] + (-1)^{(\bar{c} +1) ( \bar{b}+1)}[f_2(a,c),f_2(b,d)]=0$.
\end{enumerate}
 \end{lemma}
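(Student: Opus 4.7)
The plan is to directly expand the definition of an $L_\infty$ morphism in terms of the induced coalgebra morphism and its compatibility with the codifferentials coming from Quillen's construction (Example~\ref{oss DGLA is L infinito}). Write $F\colon \overline{\bigodot}^*(L[1])\to \overline{\bigodot}^*(M[1])$ for the unique coalgebra morphism whose projection to the cogenerators $M[1]$ is $f_1+f_2$ (with $f_n=0$ for $n\ge 3$), and let $Q$, $Q'$ denote the codifferentials built from $q_1=-d$ and $q_2(v\odot w)=(-1)^{\bar v}[v,w]$ on $L$ and $M$, respectively. Since $\overline{\bigodot}^*(M[1])$ is cofree as a reduced symmetric coalgebra, the identity $F\circ Q=Q'\circ F$ is equivalent to the vanishing of its projection to $M[1]$.

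First I would expand $p_1\circ(Q'\circ F-F\circ Q)$ on the homogeneous component $\bigodot^n(L[1])$ for every $n\ge 1$. Because $f_k=0$ for $k\ge 3$ and $q_k=q'_k=0$ for $k\ge 3$, only the contributions $q'_1\circ f_n$, $q'_2(f_k\odot f_{n-k})$ with $k,n-k\le 2$, $f_n\circ q_1$, and $f_{n-1}\circ q_2$ can be non-zero. In particular, every term vanishes for $n\ge 5$, so the $L_\infty$ condition reduces to exactly four equations indexed by $n=1,2,3,4$.

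Next I would match, for each $n\in\{1,2,3,4\}$, the resulting identity on $\bigodot^n(L[1])$ with the stated identity on $\bigwedge^n L$ through the d\'ecalage isomorphism of Remark~\ref{rem.morphismidecalati}. The case $n=1$ is $q'_1 f_1=f_1 q_1$, i.e.\ (1). For $n=2$ the surviving terms $q'_1f_2$, $q'_2(f_1\odot f_1)$, $f_2\circ q_1$ and $f_1\circ q_2$, after transfer of the Koszul signs, rearrange into (2). For $n=3$ only the pairings $(f_1,f_2)$ via $q'_2$ summed over $S(1,2)$, and $f_2\circ q_2$ summed over $S(2,1)$, enter, and yield (3). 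For $n=4$ only $q'_2(f_2\odot f_2)$ summed over $S(2,2)$ survives, producing (4); the graded skew-symmetry of the bracket combined with the symmetry of $\bigodot^2(L[1])$ explains why only the two listed partitions appear.

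The main (and essentially only) obstacle is the sign bookkeeping: the shift from $\bigodot^n(L[1])$ to $\bigwedge^n L$ introduces Koszul signs depending on the degrees of the arguments and on the unshuffles, which, combined with the $(-1)^{\bar v}$ in Quillen's $q_2$, must reproduce exactly the signs written in (2)--(4). This is a routine but finite calculation carried out separately in each of the four degrees; no structural input beyond Example~\ref{oss DGLA is L infinito} and Remark~\ref{rem.morphismidecalati} is needed, and the graded Jacobi identity plays no role (in contrast with Lemma~\ref{lem.identitadellecartan}).
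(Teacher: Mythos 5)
Your overall strategy --- project $F\circ Q=Q'\circ F$ onto the cogenerators $M[1]$, use cofreeness of $\overline{\bigodot}^*(M[1])$, and observe that with $f_k=0$ for $k\ge 3$ and $q_k=q'_k=0$ for $k\ge 3$ only the components on $\bigodot^n(L[1])$ with $n\le 4$ survive --- is sound, and it is in substance what the paper's one-line proof invokes by citing the explicit component formulas of Keller and Lada--Markl; your treatment of $n=1,2,3$ is correct. The gap is at $n=4$, exactly at the step you wave away. The $\bigodot^2(M[1])$-component of $F$ on $v_1\odot v_2\odot v_3\odot v_4$ is a sum over the \emph{three} unordered partitions $\{12|34\}$, $\{13|24\}$, $\{14|23\}$ of the inputs into two blocks of size two. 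The symmetry of $q'_2$ and of the symmetric product only identifies each unshuffle in $S(2,2)$ with its block-swapped companion, collapsing the six unshuffles to these three terms; no symmetry of the bracket or of $f_2$ can convert the pairing $\{14|23\}$ into either of the other two, because which arguments share a block is invariant under all of those operations. So the honest $n=4$ identity is a \emph{three}-term relation, $[f_2(a,b),f_2(c,d)]\pm[f_2(a,c),f_2(b,d)]\pm[f_2(a,d),f_2(b,c)]=0$ with suitable Koszul signs, and your parenthetical claim that skew-symmetry ``explains why only the two listed partitions appear'' is false.

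Moreover, this is not a reparable bookkeeping issue: condition (4) as printed, imposed for every $a,b,c,d$, is strictly stronger than the three-term identity, so no correct expansion can produce it. Concretely, take $L=\K e_1\oplus\K e_2$ abelian, concentrated in degree $0$, with zero differential and bracket; let $M$ be the free graded Lie algebra on one generator $m$ of degree $-1$ with zero differential, so that $[m,m]\neq 0$; set $f_1=0$ and $f_2(e_1,e_2)=m$. Conditions (1)--(3) hold trivially, and the three-term $n=4$ identity holds as well (on each relevant basis tuple the three contributions reduce to $\pm[m,m]$, $0$, $\mp[m,m]$ and cancel), so $\{f_1,f_2,0,\dots\}$ \emph{is} an $L_\infty$ morphism; yet (4) evaluated at $(a,b,c,d)=(e_1,e_2,e_1,e_2)$ reads $[m,m]-[f_2(e_1,e_1),f_2(e_2,e_2)]=[m,m]\neq 0$. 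Thus your plan, carried out correctly, proves the lemma with (4) replaced by the three-term identity --- and in doing so it shows that the ``only if'' direction of the lemma as printed fails, i.e.\ the statement itself is missing the third pairing. This defect does not propagate into the paper: Theorem~\ref{thm.cartan.infinito} and Corollary~\ref{cor.cartan.infinito} use only the ``if'' direction, and there every bracket $[g_2(\cdot,\cdot),g_2(\cdot,\cdot)]$ vanishes outright because $[\bi_x,\bi_y]=0$, so the two-term and three-term versions of (4) hold simultaneously.
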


\begin{proof}
It follows from the explicit formula of $L_\infty$ morphism of DGLAs, given for instance in
\cite{KellerDefoQuant,LadaMarkl}.

\end{proof}

\begin{theorem}\label{thm.cartan.infinito}
Let $L,M$ be DGLAs and $\bi\colon L\to M$ be a Cartan homotopy. Then, the sequence of linear maps
\[ g_1\in \Hom^0(L,M[t,dt]),\qquad g_1(a)=t\bl_a+dt\bi_a,\]
\[ g_2\in \Hom^{-1}(\bigwedge^2 L, M[t,dt]),\qquad g_2(a,b)=t(1-t)\bi_{[a,b]},\]
\[ g_n=0\quad\forall n\ge 3,\]
defines an $L_{\infty}$ morphism $L\dashrightarrow M[t,dt]$ (see Remark~\ref{rem.morphismidecalati}).
\end{theorem}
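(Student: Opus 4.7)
The strategy is to apply Lemma \ref{lem mor infinito di DGLA} with source $L$, target $M[t,dt]$, and $f_1 = g_1$, $f_2 = g_2$, $f_n = 0$ for $n \geq 3$; it then suffices to verify the four equations of that lemma. I plan to treat the two easy conditions (1) and (4) first, then verify (2) by direct expansion, and tackle the three-variable identity (3) last, since it is the main obstacle.

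For (1), equating the $t$- and $dt$-components of $g_1(d_L a)$ and $d_{M[t,dt]} g_1(a)$ reduces to the pair of identities $\bl_{d_L a} = d_M\bl_a$ (which is part (1) of Lemma \ref{lem.identitadellecartan}, i.e.\ the fact that $\bl$ commutes with differentials) and $\bi_{d_L a} = \bl_a - d_M\bi_a$ (the definition of $\bl$). For (4), each $g_2(-,-)$ is a polynomial multiple of an element in the image of $\bi$, so every bracket $[g_2(a,b), g_2(c,d)] = t^2(1-t)^2 [\bi_{[a,b]}, \bi_{[c,d]}]$ vanishes by the Cartan relation $[\bi_x, \bi_y] = 0$, and (4) is immediate.

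Condition (2) is verified by collecting coefficients of $\{t^2, t\,dt\}$ (the lower monomials reduce to trivialities, and $dt\wedge dt = 0$). On the left, $[g_1(a), g_1(b)]$ contributes $t^2[\bl_a, \bl_b]$ together with $t\,dt$ terms built from $[\bl,\bi]$. On the right, $d g_2(a,b)$ produces a $(1-2t)dt\,\bi_{[a,b]}$ piece and a $t(1-t)\bigl(\bl_{[a,b]} - \bi_{d_L[a,b]}\bigr)$ piece via $d_M\bi_{[a,b]} = \bl_{[a,b]} - \bi_{d_L[a,b]}$; the remaining $g_2(d_L\!\cdot,\cdot)$ terms contribute $t(1-t)$-multiples of $\bi$ of brackets which combine with $\bi_{d_L[a,b]}$ through the Leibniz rule in $L$. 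Using $[\bl_a, \bl_b] = \bl_{[a,b]}$ from Lemma \ref{lem.identitadellecartan}(1) and $\bi_{[a,b]} = [\bi_a, \bl_b] = (-1)^{\bar a}[\bl_a, \bi_b]$ from Lemma \ref{lem.identitadellecartan}(2), the coefficients match on each monomial.

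The main obstacle is condition (3). After substituting the formulas for $g_1$ and $g_2$, the left-hand side $g_2([a,b],c) - (-1)^{\bar b\bar c} g_2([a,c],b) + (-1)^{\bar a(\bar b+\bar c)} g_2([b,c],a)$ collects to a $t(1-t)$-multiple of $\bi$ applied to the Jacobiator of $L$, and hence vanishes. On the right-hand side, each $[g_1(\cdot), g_2(\cdot,\cdot)]$ splits into a $t\,dt$ contribution of the form $[\bi_\bullet, \bi_{[\bullet,\bullet]}]$, which vanishes by $[\bi,\bi]=0$, and a $t^2(1-t)$ contribution of the form $[\bl_\bullet, \bi_{[\bullet,\bullet]}]$. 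Applying graded skew-symmetry to rewrite $[\bl_a, \bi_{[b,c]}] = \pm [\bi_{[b,c]}, \bl_a]$, the sum of the three $t^2(1-t)$ pieces is precisely the three-variable identity of Lemma \ref{lem.identitadellecartan}(3), which was established there for exactly this purpose. Thus (3) holds, and the only remaining work is a careful bookkeeping of Koszul signs, with all algebraic content packaged in Lemma \ref{lem.identitadellecartan}.
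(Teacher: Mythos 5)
Your proposal is correct and follows essentially the same route as the paper: both reduce the statement to the four conditions of Lemma~\ref{lem mor infinito di DGLA} and verify them monomial by monomial, with (1) and (2) resting on $\bl=d\bi+\bi d$ being a DGLA morphism and on $\bi_{[a,b]}=[\bi_a,\bl_b]=(-1)^{\bar a}[\bl_a,\bi_b]$, condition (4) on $[\bi_x,\bi_y]=0$, and condition (3) on the Jacobiator identity \eqref{equ.Jacobicartanizzato} together with the three-variable identity of Lemma~\ref{lem.identitadellecartan}(3), exactly as in the paper's proof.
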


\begin{proof}
We need to check the four conditions  of Lemma~\ref{lem mor infinito di DGLA}.
As regards (1), since $\bl$ is a morphism of DGLAs  and $d\bi+\bi d =\bl$ we have
\[
d_M(g_1(a))=dt\bl_a+ td(\bl_a) -dtd(\bi_a)= dt\bl_a+ t\bl_{da} -dt(\bl_a-\bi_{da})=t\bl_{da}+dt\bi_{da}=g_1(d_L(a)).
\]

As regards (2),  we have
\[g_1([a,b])-d_M \circ g_2(a,b)-g_2(d_L a,b)+(-1)^{\bar{a}\bar{b}} g_2(d_L b,a)=
\]
\[
t\bl_{[a,b]} +dt\bi_{[a,b]}
-d_M(t(1-t)\bi_{[a,b]} )-
 t(1-t)\bi_{[da,b]}
 + (-1)^{\bar{a}\bar{b}}t(1-t)
\bi_{[db,a]}=
\]
\[
t^2[\bl_a,\bl_b]+2tdt\bi_{[a,b]} =t^2[\bl_a,\bl_b]+tdt((-1)^{\bar{a}}[\bl_a,\bi_b]+[\bi_a, \bl_b])\]
\[
= [g_1(a),g_1(b)]\]
since we have
\[
-d_M(t(1-t)\bi_{[a,b]} )= -((1-2t)dt\bi_{[a,b]} + t(1-t)d(\bi_{[a,b]}))=
\]
\[
(2t-1)dt\bi_{[a,b]} + t(t-1)( \bl_{[a,b]}+\bi_{[da,b]}- (-1)^{\bar{a}\bar{b}}\bi_{[db,a]}).
\]

As regards Condition (3), we have that
 \[g_2([a,b],c)- (-1)^{\bar{b}\bar{c}}g_2([a,c],b) + (-1)^{\bar{a}(\bar{b}+\bar{c})}g_2([b,c],a) =
 \]
 \[
 (1-t)t(\bi_{[[a,b],c]}  -(-1)^{\bar{b}\bar{c}} \bi_{[[a,c],b]}
 + (-1)^{\bar{a}(\bar{b}+\bar{c})} \bi_{[[b,c],a]} )=0
 \]
 by Equation~\eqref{equ.Jacobicartanizzato} of Lemma \ref{lem.identitadellecartan}; moreover,
\[(-1)^{a}[g_1(a),g_2(b,c)]-[g_2(a,b),g_1(c)]+
 (-1)^{\bar{b}\bar{c}}[g_2(a,c),g_1(b)] \]
 \[
 =t^2(1-t ) ( (-1)^{a}[\bl_a,\bi_{[b,c]}] - [\bi_{[a,b]},\bl_c] + (-1)^{\bar{b}\bar{c}} [\bi_{[a,c]},\bl_b]  ) =0
    \]
by Lemma~\ref{lem.identitadellecartan}.
Finally, condition (4) follows from the fact that $[\bi_x,\bi_y]=0$, for every choice of $x, y \in L$.
\end{proof}

\begin{corollary}\label{cor.cartan.infinito}
In the same assumption of Theorem~\ref{thm.cartan.infinito}, let $N\subset M$ be a differential graded Lie subalgebra such that $\bl(L)\subset N$ and 
\[ TW(\chi)=\{(x,y(t))\in N\times M[t,dt]\mid y(0)=0,\; y(1)=x\}\]
the homotopy fiber of the inclusion $\chi \colon N\hookrightarrow M$.
Then, the maps
\[ g_1\in \Hom^0(L,TW(\chi)),\qquad g_1(a)=(\bl_a,t\bl_a+dt\bi_a),\]
\[ g_2\in \Hom^{-1}({\bigwedge}^2 L, TW(\chi)),\qquad g_2(a,b)=(0,(1-t)t\bi_{[a,b]}),\]
\[ g_n=0\quad\forall n\ge 3,\]
define an $L_{\infty}$ morphism $L\dashrightarrow TW(\chi)$.
\end{corollary}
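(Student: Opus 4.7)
The plan is to derive this corollary as a straightforward packaging of Theorem~\ref{thm.cartan.infinito} together with the fact that $\bl$ is itself a strict DGLA morphism. The homotopy fiber $TW(\chi)$ sits as a sub-DGLA of the product $N \times M[t,dt]$, cut out by the evaluation conditions $y(0)=0$ and $y(1)=x$, so it suffices to build an $L_{\infty}$ morphism $L \dashrightarrow N \times M[t,dt]$ whose image lands in $TW(\chi)$.

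First, by Lemma~\ref{lem.identitadellecartan}(1) the map $\bl \colon L \to M$ is a morphism of DGLAs, and by hypothesis it factors through $N$, giving a strict DGLA (hence linear $L_{\infty}$) morphism $\bl\colon L \to N$ with components $\bl_1 = \bl$ and $\bl_n = 0$ for $n \ge 2$. Second, Theorem~\ref{thm.cartan.infinito} supplies an $L_{\infty}$ morphism $L \dashrightarrow M[t,dt]$ with components $g_1^M(a) = t\bl_a + dt\,\bi_a$, $g_2^M(a,b) = t(1-t)\bi_{[a,b]}$, and $g_n^M = 0$ for $n \ge 3$. Taking the product of these two $L_{\infty}$ morphisms yields an $L_{\infty}$ morphism
\[
L \dashrightarrow N \times M[t,dt]
\]
whose components are exactly $g_1(a) = (\bl_a,\, t\bl_a + dt\,\bi_a)$, $g_2(a,b) = (0,\, (1-t)t\,\bi_{[a,b]})$, and $g_n = 0$ for $n \ge 3$. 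The $L_{\infty}$ relations need no separate verification: in the first coordinate they reduce to the DGLA axioms for $\bl$ (since $g_2$ vanishes there), and in the second coordinate they are precisely those checked in Theorem~\ref{thm.cartan.infinito}.

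It then remains to check that the image lies in $TW(\chi)$, i.e.\ that each $g_n(a_1,\ldots,a_n)=(x,y(t,dt))$ satisfies $y(0,0)=0$ and $y(1,0)=\chi(x)$. For $g_1(a)=(\bl_a,\,t\bl_a+dt\,\bi_a)$, the polynomial $y(t,dt)=t\bl_a+dt\,\bi_a$ vanishes at $(t,dt)=(0,0)$ and equals $\bl_a=\chi(\bl_a)$ at $(t,dt)=(1,0)$. For $g_2(a,b)=(0,\,(1-t)t\,\bi_{[a,b]})$, the second component vanishes at both $(0,0)$ and $(1,0)$, matching the first component $0=\chi(0)$. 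Higher $g_n$ are zero and so trivially lie in $TW(\chi)$. This completes the construction.

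There is no real obstacle: once Theorem~\ref{thm.cartan.infinito} is in hand and the DGLA morphism property of $\bl$ is observed, the only work is the boundary-condition verification, which is an immediate evaluation. The minor bookkeeping point worth flagging is that taking a product of two $L_{\infty}$ morphisms with the same source (one strict, one genuine) really does give an $L_{\infty}$ morphism into the product DGLA, which is immediate from the coalgebra definition of $L_{\infty}$ morphisms since the coalgebra on the symmetric coalgebra of $N[1] \oplus M[t,dt][1]$ splits compatibly on the two factors.
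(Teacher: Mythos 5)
Your proposal is correct and follows essentially the same approach as the paper: the paper likewise reduces to a componentwise check inside $N\times M[t,dt]$, handling the first component via the fact that $\bl$ is a DGLA morphism landing in $N$ (conditions (3) and (4) of Lemma~\ref{lem mor infinito di DGLA} being trivial because the $N$-component of $g_2$ vanishes) and the second component by invoking Theorem~\ref{thm.cartan.infinito}. Your product-of-$L_{\infty}$-morphisms packaging and your explicit verification of the boundary conditions $y(0,0)=0$, $y(1,0)=\chi(x)$ are just a slightly more formal presentation of the same argument, making explicit what the paper leaves implicit.
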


\begin{proof} Since $ TW(\chi) \subset N\times M[t,dt]$, we need to
check the 4 conditions  of Lemma~\ref{lem mor infinito di DGLA} on both components.
As regard the first component, Conditions (1) and (2) follows  from the fact that $\bl$ is a morphism of DGLAs (and so it commutes with differentials and brackets).
Conditions (3) and (4) are trivial, since ${g_2}_{|N}=0$.
As regard the second components, it is due to the previous theorem.

\end{proof}

The following definition is the natural modification of the definition
of Tamarkin-Tsygan calculus \cite{TT05}, whenever the Gerstenhaber algebra is replaced by a DGLA.

\begin{definition}\label{def.contraction}
Let $L$ be a differential graded Lie algebra and $V$ a differential
 graded vector space. A bilinear map
\[ L\times V\xrightarrow{\quad\contr\quad} V\]
of degree $-1$ is called a \emph{calculus} if the induced map
\[ \bi\colon L\to \Hom^*_{\K}(V,V),\qquad \bi_l(v)=l\contr v,\]
is a Cartan homotopy.\end{definition}

The notion of calculus is stable under scalar extensions, more
precisely, we have the following result.

\begin{lemma}\label{lem. cartan implica cartan XA}

Let $V$ be  a differential graded vector space  and
\[
L\times V\xrightarrow{\quad\contr\quad} V
\]
a calculus. Then, for every differential graded commutative
algebra $A$, the natural extension
\[
(L\otimes A)\times (V\otimes A) \xrightarrow{\quad\contr\quad}
(V\otimes A) \qquad (l\otimes a)\contr (v\otimes
b)=(-1)^{\bar{a}\;\bar{v}}l \contr v\otimes ab,
\]
is a calculus.
\end{lemma}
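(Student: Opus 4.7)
The plan is to factor the induced map through a standard scalar extension, reducing the verification to two elementary facts. I would first introduce the auxiliary morphism
\[ \alpha\colon \Hom^*_{\K}(V,V)\otimes A \longrightarrow \Hom^*_{\K}(V\otimes A, V\otimes A),\qquad \alpha(\phi\otimes a)(v\otimes b)=(-1)^{\bar{a}\bar{v}}\phi(v)\otimes ab, \]
where the source is equipped with the standard scalar extension DGLA structure, $[\phi\otimes a,\psi\otimes c]=(-1)^{\bar{a}\bar{\psi}}[\phi,\psi]\otimes ac$, with differential obtained from the tensor product of the differentials on $\Hom^*_{\K}(V,V)$ and $A$. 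Direct inspection of the formula in the statement shows that the induced map $\tilde{\bi}\colon L\otimes A \to \Hom^*_{\K}(V\otimes A, V\otimes A)$ factors as $\tilde{\bi}_{l\otimes a}=\alpha(\bi_l\otimes a)$.

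Since composing a Cartan homotopy with a morphism of DGLAs on the right trivially preserves the two identities of Definition~\ref{def.cartanhomotopy} (both $[\bi_{\bullet},\bi_{\bullet}]=0$ and $\bi_{[-,-]}=[\bi_{-},d\bi_{-}]$ are transported verbatim by a DGLA morphism), the proof would then split into two independent verifications: (a) that $\alpha$ is a morphism of DGLAs, and (b) that the map $L\otimes A\to \Hom^*_{\K}(V,V)\otimes A$, $l\otimes a\mapsto \bi_l\otimes a$, is itself a Cartan homotopy. For (a), the Koszul sign $(-1)^{\bar{a}\bar{v}}$ in the definition of $\alpha$ is precisely the one needed to make composition of operators on $V\otimes A$ compatible with multiplication in the graded commutative algebra $A$, so a one-line check yields that $\alpha$ intertwines brackets and differentials.

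For (b), the two identities to check become $[\bi_l\otimes a,\bi_m\otimes c]=0$ and $(-1)^{\bar{a}\bar{m}}\bi_{[l,m]}\otimes ac=[\bi_l\otimes a,d(\bi_m\otimes c)]$. The first follows at once from $[\bi_l,\bi_m]=0$ together with graded commutativity of $A$. Expanding the right-hand side of the second via the graded Leibniz rule $d(\bi_m\otimes c)=(d\bi_m)\otimes c\pm \bi_m\otimes d_Ac$ produces one term whose bracket reduces, via the Cartan identity $\bi_{[l,m]}=[\bi_l,d\bi_m]$ in $L$, to exactly the desired left-hand side, plus a second term proportional to $[\bi_l,\bi_m]=0$ which vanishes. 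The only genuine obstacle is bookkeeping: three independent degree shifts contribute Koszul signs (the $-1$ in the degree of $\bi$, the internal grading of $A$, and the sign in the bracket on $L\otimes A$), and one must ensure they combine correctly in each step. Beyond this, no algebraic input is needed other than $[\bi_l,\bi_m]=0$, $\bi_{[l,m]}=[\bi_l,d\bi_m]$, and graded commutativity of $A$.
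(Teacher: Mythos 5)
Your proposal is correct, but it is organized differently from the paper's own proof, which is a one-line appeal to \cite[Lemma 4.7]{algebraicBTT}, i.e., to a direct verification of the two identities of Definition~\ref{def.cartanhomotopy} for the extended contraction, carried out on elementary tensors of $L\otimes A$ and $V\otimes A$. Your factorization $\tilde{\bi}_{l\otimes a}=\alpha(\bi_l\otimes a)$ splits that single computation into two independent pieces: (a) the statement that $\alpha\colon \Hom^*_{\K}(V,V)\otimes A\to \Hom^*_{\K}(V\otimes A,V\otimes A)$ is a morphism of DGLAs, which concentrates all of the Koszul-sign bookkeeping into a claim that never mentions $L$ or the Cartan identities; and (b) the statement that the scalar extension $l\otimes a\mapsto \bi_l\otimes a$ is a Cartan homotopy with values in the DGLA $\Hom^*_{\K}(V,V)\otimes A$, which is where $[\bi_l,\bi_m]=0$ and $\bi_{[l,m]}=[\bi_l,d\bi_m]$ actually enter and where, as you note, the only extra term produced by the Leibniz rule on $d(\bi_m\otimes c)$ is killed by $[\bi_l,\bi_m]=0$. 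Combined with the easy (and correctly invoked) observation that post-composing a Cartan homotopy with a morphism of DGLAs again yields a Cartan homotopy, this proves the lemma, since the map of the statement is exactly $\alpha\circ(\bi\otimes\operatorname{id}_A)$. What your route buys is modularity: step (b) is the stability of Cartan homotopies $L\to M$ under $-\otimes A$ for an arbitrary target DGLA $M$, and step (a) is a universal comparison morphism independent of the calculus, so each piece is reusable elsewhere; the cost is that the sign verifications hidden in (a) are essentially the same ones the direct approach performs, so the total computational content is unchanged. Both arguments are complete and correct.
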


\begin{proof}
Straightforward, see \cite[Lemma 4.7]{algebraicBTT}.
\end{proof}

The notions of Cartan homotopy and calculus extend naturally to
the semicosimplicial objects and sheaves. Here, we consider only the case of
calculus.

\begin{definition}\label{def.contractioncosimpl}
Let $\mathfrak{g}^\Delta$ be a semicosimplicial DGLA and
$V^\Delta$ a semicosimplicial differential graded vector space.
A semicosimplicial Lie-calculus
\[ \mathfrak{g}^\Delta\times V^\Delta\xrightarrow{\;\contr\;} V^\Delta,\]
is a sequence of calculi $\mathfrak{g}_n\times V_n\xrightarrow{\;
 \contr\;} V_n$, $n\ge 0$,
commuting with coface maps, i.e., $\de_k(l\contr v)=\de_k(l)\contr
\de_k(v)$, for every $k$.
\end{definition}

\begin{lemma}\label{lem.TWforcontractions}
Every semicosimplicial calculus
\[ \mathfrak{g}^\Delta\times V^\Delta\xrightarrow{\;\contr\;} V^\Delta\]
extends  naturally to a calculus
\[  {TW}(\mathfrak{g}^\Delta)\times  {TW}(V^\Delta)
\xrightarrow{\;\contr\;}  {TW}(V^\Delta).\]
\end{lemma}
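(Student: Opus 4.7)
The plan is to build the TW-level calculus pointwise, using the scalar-extension already available from Lemma~\ref{lem. cartan implica cartan XA}, and then verify that it passes to the TW-subspace and still satisfies the Cartan homotopy identities.

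First, for every $n$ the algebra $(A_{PL})_n$ is a commutative DG-algebra, so by Lemma~\ref{lem. cartan implica cartan XA} the given calculus at level $n$ extends canonically to a calculus
\[
\bigl((A_{PL})_n \otimes \mathfrak{g}_n\bigr)\times \bigl((A_{PL})_n \otimes V_n\bigr)
\xrightarrow{\;\contr\;} (A_{PL})_n \otimes V_n,
\qquad
(\alpha\otimes a)\contr(\beta\otimes u)=(-1)^{\bar a\,\bar\beta}\alpha\beta\otimes (a\contr u).
\]
Taking the product over $n$ I get a bilinear map of degree $-1$ on
$\prod_n((A_{PL})_n\otimes\mathfrak{g}_n)\times\prod_n((A_{PL})_n\otimes V_n)$, and I define the proposed TW-contraction as the restriction of this product map to $TW(\mathfrak{g}^\Delta)\times TW(V^\Delta)$, i.e.\ componentwise.

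Next I must check that this restriction lands inside $TW(V^\Delta)$. Fix $x=(x_n)\in TW(\mathfrak{g}^\Delta)$, $v=(v_n)\in TW(V^\Delta)$ and $0\le k\le n$. On the level $n$ tensor product, the face map $\delta^k\otimes\operatorname{Id}$ is a morphism of graded algebras in the $(A_{PL})_\bullet$-argument and the identity on $\mathfrak g_n$, $V_n$, so by the defining formula of the extended contraction it commutes strictly with $\contr$; analogously, since $\partial_k\colon\mathfrak g_{n-1}\to\mathfrak g_n$ and $\partial_k\colon V_{n-1}\to V_n$ satisfy $\partial_k(a\contr u)=\partial_k(a)\contr\partial_k(u)$ (this is the content of Definition~\ref{def.contractioncosimpl}), also $\operatorname{Id}\otimes\partial_k$ commutes with $\contr$. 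Using the TW-identities for $x$ and $v$ separately I then get
\[
(\delta^{k}\otimes\operatorname{Id})(x_n\contr v_n)
=(\operatorname{Id}\otimes\partial_k)(x_{n-1}\contr v_{n-1}),
\]
so $x\contr v\in TW(V^\Delta)$.

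Finally, I verify the Cartan homotopy axioms $[\bi_a,\bi_b]=0$ and $\bi_{[a,b]}=[\bi_a,d\bi_b]$ for the induced map $\bi\colon TW(\mathfrak g^\Delta)\to\Hom^*_{\K}(TW(V^\Delta),TW(V^\Delta))$. Both identities are tested on an arbitrary $v=(v_n)\in TW(V^\Delta)$ and are purely componentwise, because the bracket on $TW(\mathfrak g^\Delta)$ and the differential on $TW(V^\Delta)$ come from the componentwise structures of $\prod_n(A_{PL})_n\otimes\mathfrak g_n$ and $\prod_n(A_{PL})_n\otimes V_n$. At each level $n$ the required identities already hold by Lemma~\ref{lem. cartan implica cartan XA}, which guarantees that the scalar-extended pairing is a calculus. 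Hence they hold on $TW$ as well, finishing the proof.

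The only mildly delicate point is the bookkeeping of Koszul signs when checking that $\delta^{k}\otimes\operatorname{Id}$ and $\operatorname{Id}\otimes\partial_{k}$ commute with the extended contraction; everything else is formal once the per-level statement of Lemma~\ref{lem. cartan implica cartan XA} is in hand.
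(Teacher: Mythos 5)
Your proof is correct and follows essentially the same route as the paper: the paper's own proof is the one-line ``Straightforward, see \cite[Proposition 4.9]{algebraicBTT}'', and the argument behind that reference is precisely your construction --- componentwise scalar extension via Lemma~\ref{lem. cartan implica cartan XA}, the check that compatibility with $\delta^{k}\otimes\operatorname{Id}$ and $\operatorname{Id}\otimes\partial_{k}$ (the latter using Definition~\ref{def.contractioncosimpl}) makes the contraction preserve the TW subspace, and the componentwise verification of the Cartan homotopy identities. Nothing is missing; your treatment of the Koszul signs (which is harmless since the face and coface maps preserve degrees) is exactly the bookkeeping the cited proposition carries out.
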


\begin{proof}
Straightforward, see
 \cite[Proposition 4.9]{algebraicBTT}.
\end{proof}

\bigskip
\section{Calculus on de Rham complex of a DG-scheme}

\medskip

Let $S$ be a DG-algebra and  $d\colon S\to \Omega_{S/\K}$ its  universal derivation.
Denote by $\Omega^0_S=S$, $\Omega^1_S=\Omega_{S/\K}[-1]$ and
\[\Omega^k_S=\Sym_S^k \Omega^1_S.\]
The $S$-module $\Omega^1_S$ is generated by the elements $da$, $a\in S$ and
$\deg(da)=\deg(a)+1$.
In  the algebra $\Omega^*_S=\bigoplus_k \Omega^k_S$ we have
\[ da\wedge db=(-1)^{(\bar{a}+1)(\bar{b}+1)}db\wedge da.\]

The $\K$-linear map of degree $+1$ $d\colon S\to \Omega^1_S$ extends to a unique differential
(de Rham) of the graded commutative algebra $\Omega^*_S$, i.e.,
$d\colon \Omega^*_S\to \Omega^*_S$.
Notice that $d\in \Der_{\K}^1(\Omega_S^*,\Omega_S^*)$, $d^2=0$ and
$d\Omega^i_S\subset\Omega^{i+1}_S$.

Next, assume that $\alpha\in \Der_{\K}^k(S,S)$ is a
$\K$-linear derivation. Then  $\alpha$ induces a morphism
\[ \bi_\alpha\in\Hom_S^{k-1}(\Omega^1_S,S)=\Hom_S^{k}(\Omega_{S/\K},S)\]
such that $\bi_{\alpha}(da)=\alpha(a)$, for every $a\in S$.
By Leibniz rule, $\bi_{\alpha}$ extends to a $S$-derivation of the graded symmetric algebra
$\Omega^*_S$, i.e.,
\[\bi_\alpha\in\Der_S^{k-1}(\Omega^*_S,\Omega^*_S).\]
We shall call $\bi_{\alpha}\colon \Omega^*_S\to \Omega^*_S$ the \emph{interior product}
by $\alpha$.
Notice that $\bi_\alpha(\Omega^k_S)\subset \Omega^{k-1}_S$.
Define the Lie derivation
\[ L_{\alpha}:=[\bi_{\alpha},d]\in \Der^k_{\K}(\Omega^*_S,\Omega^*_S).\]
Since $L_{\alpha}$ is $[-,d]$-exact it is also $[-,d]$-closed, i.e., $[L_{\alpha},d]=0$; if
$a\in S=\Omega^0_S$ we have
\[ L_{\alpha}(a)=\bi_{\alpha}(da)=\alpha(a),\qquad
L_{\alpha}(da)=(-1)^{k}d(L_\alpha(a))=(-1)^{k}d(\alpha(a)).\]
Equivalently,  $L_{\alpha}$ is the unique derivation extending
$\alpha\colon \Omega^0_S\to \Omega^0_S$,
such that $\deg(L_{\alpha})=\deg(\alpha)$ and $[d,L_\alpha]=0$. Notice that $L_{\alpha}(\Omega^i_S)
\subset \Omega^i_S$.

If $\delta\in \Der_{\K}^1(S,S)$ is a
differential, then  also
$L_\delta\colon \Omega^*_S\to \Omega^*_S$ is a differential: in fact, for every $a\in S$
\[ L_{\delta}^2(a)=\delta^2(a)=0,\qquad L_{\delta}^2(da)=L_\delta(-d\delta(a))=
d\delta^2(a)=0.\]
In such case,
$(\Omega^*_S,d,L_\delta)$ is a double complex and $(d+L_\delta)^2=0$.

We conclude the section, describing some properties of $\bi_{\alpha}$ and $L_\alpha$,
for $\alpha\in \Der_{\K}^k(S,S)$.

\begin{lemma}\label{lem. Cartan equalities}
Given $\alpha,\beta\in\Der_{\K}^*(S,S)$,  we have the equalities:\begin{enumerate}
\item $[\bi_{\alpha},\bi_\beta]=0$;
\item $L_{\beta}=[\bi_{\beta}, d]=(-1)^{\bar{\beta}}[d,\bi_{\beta}]$;
\item $\bi_{[\alpha,\beta]}=[L_{\alpha},\bi_{\beta}]=[[\bi_{\alpha},d],\bi_{\beta}]$;

\item $[L_{\alpha},d]=0$, $L_{[\alpha,\beta]}=[L_{\alpha},L_{\beta}]$;

\item $[L_{\delta},\bi_{\beta}]+\bi_{[\delta,\beta]}=0$, for every $\delta\in \Der^1_{\K}(S,S)$;

\item $\bi_{[\alpha,\beta]}=[[\bi_{\alpha},d+L_{\delta}],\bi_{\beta}]=
[\bi_{\alpha},[[d+L_{\delta}],\bi_{\beta}]]$, for every $\delta\in \Der^1_{\K}(S,S)$.

\end{enumerate}
\end{lemma}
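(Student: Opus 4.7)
The plan is to exploit the observation that every operator occurring in the six identities---$d$, $\bi_\alpha$, $L_\alpha$, and every iterated graded commutator of these---is a $\K$-linear graded derivation of the graded commutative algebra $\Omega^*_S$, because the graded commutator of graded derivations is itself a graded derivation. Since $\Omega^*_S$ is generated as a $\K$-algebra by $\Omega^0_S = S$ together with the exact one-forms $\{da : a \in S\}$, two such derivations of the same total degree coincide as soon as they agree on these generators. The entire lemma therefore reduces to a mechanical list of checks on $a \in S$ and $da \in \Omega^1_S$, repeatedly invoking the formulas already recorded in the section: $\bi_\alpha|_S = 0$, $\bi_\alpha(da) = \alpha(a)$, $L_\alpha(a) = \alpha(a)$, and $L_\alpha(da) = (-1)^{\bar\alpha} d\alpha(a)$.

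For (1), both sides vanish on $S$, and $[\bi_\alpha, \bi_\beta](da) = \bi_\alpha(\beta(a)) \pm \bi_\beta(\alpha(a)) = 0$ because $\bi_\gamma$ kills $S$. Identity (2) combines the definition $L_\beta = [\bi_\beta, d]$ with the sign rewriting $[\bi_\beta, d] = (-1)^{\bar\beta}[d, \bi_\beta]$ coming from graded skew-symmetry. For (3), both $\bi_{[\alpha,\beta]}$ and $[L_\alpha, \bi_\beta]$ vanish on $S$ (the latter because $L_\alpha$ preserves $S$ while $\bi_\beta|_S = 0$), and a direct computation on $da$ gives $\alpha\beta(a) - (-1)^{\bar\alpha\bar\beta}\beta\alpha(a) = [\alpha,\beta](a) = \bi_{[\alpha,\beta]}(da)$ in both cases; the second equality in (3) is simply the substitution $L_\alpha = [\bi_\alpha, d]$. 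The first assertion of (4), $[L_\alpha, d] = 0$, is immediate on generators using $d^2 = 0$; the second, $L_{[\alpha,\beta]} = [L_\alpha, L_\beta]$, reduces on $a \in S$ to the bracket identity in $\Der^*_\K(S,S)$ and then extends to $da$ via $[L_\gamma, d] = 0$.

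Identity (5) is another direct evaluation on $da$, specialized to $\delta$ of degree $1$ (so $L_\delta(da) = -d\delta(a)$). Identity (6) then follows formally from (1), (3) and (5) by two applications of graded Jacobi: expanding
\[
[[\bi_\alpha, d + L_\delta], \bi_\beta] = [[\bi_\alpha, d], \bi_\beta] + [[\bi_\alpha, L_\delta], \bi_\beta],
\]
the first summand equals $\bi_{[\alpha,\beta]}$ by (3), while a further Jacobi expansion of the second summand reduces it to $[\bi_\alpha, [L_\delta, \bi_\beta]]$ modulo a term involving $[\bi_\alpha, \bi_\beta] = 0$; using (5) to rewrite $[L_\delta, \bi_\beta]$ as $\pm\bi_{[\delta,\beta]}$, a final application of (1) kills the remainder. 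The symmetric expansion $[\bi_\alpha, [d + L_\delta, \bi_\beta]]$ is handled in the same way. The only genuine obstacle is careful Koszul sign bookkeeping, driven by the degree assignments $\bar d = 1$, $\bar{\bi_\alpha} = \bar\alpha - 1$, and $\bar{L_\alpha} = \bar\alpha$; once these conventions are pinned down, every verification collapses to a one- or two-line computation on generators.
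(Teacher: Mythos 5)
Your plan---observe that $d$, $\bi_\alpha$, $L_\alpha$ and all their iterated graded commutators are graded derivations of $\Omega^*_S$, and check identities between derivations of equal degree on the algebra generators $a\in S$ and $da$---is exactly the paper's method: the paper proves (1) and (3) by precisely these generator checks, and obtains (4), (5), (6) by formal bracket algebra (Jacobi identity and graded antisymmetry) on top of (1)--(3). Your generator-based treatment of (4), and your Jacobi reduction of (6) to (1), (3), (5), are correct and equivalent to what the paper does; items (1), (2), (3), (4), (6) of your proposal are fine.

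The gap is item (5). Carrying out the ``direct evaluation on $da$'' that you invoke, with your own degree conventions ($\deg L_\delta=1$, $\deg\bi_\beta=\bar{\beta}-1$, hence $[L_\delta,\bi_\beta]=L_\delta\bi_\beta+(-1)^{\bar{\beta}}\bi_\beta L_\delta$), gives
\[
[L_\delta,\bi_\beta](da)=L_\delta(\beta(a))+(-1)^{\bar{\beta}}\bi_\beta\bigl(-d\delta(a)\bigr)
=\delta\beta(a)-(-1)^{\bar{\beta}}\beta\delta(a)=[\delta,\beta](a)=\bi_{[\delta,\beta]}(da),
\]
and both sides kill $S$; so the evaluation yields $[L_\delta,\bi_\beta]=+\,\bi_{[\delta,\beta]}$, which is just item (3) with $\alpha=\delta$, and \emph{not} the stated $[L_\delta,\bi_\beta]+\bi_{[\delta,\beta]}=0$. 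Indeed, (3) and (5) as printed are mutually inconsistent unless $\bi_{[\delta,\beta]}=0$, which already fails for $S=\K[x,y]$ with $\bar{y}=-1$, $\delta(x)=0$, $\delta(y)=x$, $\beta=\de/\de x$: there $[\delta,\beta]=-\de/\de y$, so $\bi_{[\delta,\beta]}(dy)=-1\neq 0$. Thus the one-line computation you appeal to refutes the stated sign rather than confirming it; (5) should read $[L_\delta,\bi_\beta]-\bi_{[\delta,\beta]}=0$. (The paper's own proof of (5) contains the same slip in its last step: by graded antisymmetry $(-1)^{\bar{\beta}}[\bi_\beta,L_\delta]=+[L_\delta,\bi_\beta]$, since $\bi_\beta$ has degree $\bar{\beta}-1$, not $-[L_\delta,\bi_\beta]$.) The error is harmless downstream---in your step (6), as in the paper's proof of (6) and of the subsequent Proposition, the term produced by (5) enters only through a bracket of two interior products and dies by (1) whatever its sign---but a correct write-up must either fix the sign in (5) or record the discrepancy; asserting that the verification of (5) ``collapses to a one-line computation'' is precisely the step that fails.
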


\begin{proof}
Since $\bi_{\alpha}\bi_{\beta}(da)=0$, for every $\alpha,\beta$, we have
$[\bi_{\alpha},\bi_\beta]=0$.

Next, as regard (3), we have
\[ \bi_{[\alpha,\beta]}(da)=[\alpha,\beta]a=\alpha(\beta(a))-(-1)^{\bar{\alpha}\,
\bar{\beta}}\beta(\alpha(a))=[L_\alpha,L_\beta]a\]
and
\[ [L_{\alpha},\bi_{\beta}](da)=L_{\alpha}\bi_{\beta}(da)-(-1)^{(\bar{\beta}-1)
\bar{\alpha}}\bi_{\beta}L_{\alpha}(da)=L_{\alpha}L_{\beta}(a)-(-1)^{\bar{\beta}\;
\bar{\alpha}}\bi_{\beta}d(L_{\alpha}a)=[L_\alpha,L_\beta]a.\]

As regard (4), we have
\[ L_{[\alpha,\beta]}=[\bi_{[\alpha,\beta]},d]=[[L_{\alpha},\bi_{\beta}],d]=
[L_{\alpha},[\bi_{\beta},d]]=[L_{\alpha},L_{\beta}].
\]

As regard (5), for every  $\delta\in \Der^1_{\K}(S,S)$, we have
\[\bi_{[\delta,\beta]}=-(-1)^{\bar{\beta}}\bi_{[\beta,\delta]}=
(-1)^{\bar{\beta}}[\bi_{\beta},L_{\delta}]=-[L_{\delta},\bi_{\beta}].
\]
The last one follows from
\[ [[\bi_{\alpha},L_{\delta}],\bi_{\beta}]=\pm[\bi_{[\alpha,\delta]},\bi_{\beta}]=0.\]

\end{proof}

\begin{proposition} Let $S$ be a DG-algebra and $\delta\in \Der^1_{\K}(S,S)$  a differential. Then,
 the map
\[ \bi\colon (\Der^*_{\K}(S,S), [\delta,-])\to
(\Der^*_{\K}(\Omega^*_S,\Omega^*_S), [d+L_{\delta},-]),\]
is a Cartan homotopy with
$[d+L_{\delta},\bi_{\alpha}]+\bi_{[\delta,\alpha]}=(-1)^{\bar{\alpha}} L_{\alpha}$.
\end{proposition}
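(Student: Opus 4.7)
The plan is to deduce the proposition by repackaging the graded commutator identities already collected in Lemma~\ref{lem. Cartan equalities}; no computation beyond those identities is required. First, I would confirm that source and target are genuine DGLAs. The source is standard, with $[\delta,[\delta,-]]=\tfrac{1}{2}[[\delta,\delta],-]=0$. For the target I need $(d+L_\delta)^2=0$ as an operator on $\Omega^*_S$; this follows from $d^2=0$, from $[d,L_\delta]=0$ (item~(4) of Lemma~\ref{lem. Cartan equalities}), and from $L_\delta^2=\tfrac{1}{2}L_{[\delta,\delta]}=0$ (item~(4) together with $\delta^2=0$ and the characteristic zero hypothesis). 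A degree check is immediate: for $\alpha\in\Der^k_\K(S,S)$ one has $\bi_\alpha\in\Der^{k-1}_\K(\Omega^*_S,\Omega^*_S)$, so $\bi$ has degree $-1$ as required by Definition~\ref{def.cartanhomotopy}.

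Next I would verify the two Cartan homotopy axioms. The first, $[\bi_\alpha,\bi_\beta]=0$, is exactly item~(1) of Lemma~\ref{lem. Cartan equalities}. The second, $\bi_{[\alpha,\beta]}=[\bi_\alpha,d_M\bi_\beta]$ with $d_M=[d+L_\delta,-]$, is exactly item~(6). The conceptual step here is to recognize that the correct differential on the target DGLA is the \emph{total} differential $[d+L_\delta,-]$ of the double complex $(\Omega^*_S,d,L_\delta)$, and not $[d,-]$ alone; with that choice, the Lie-derivative correction term arising on the right-hand side of item~(6) is precisely what is needed to match $\bi_{[\alpha,\beta]}$ on the left, and the pure $[d,-]$ version would fail.

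Finally, for the displayed identity $[d+L_\delta,\bi_\alpha]+\bi_{[\delta,\alpha]}=(-1)^{\bar\alpha}L_\alpha$, I would expand the bracket on the left and apply item~(2) to get $[d,\bi_\alpha]=(-1)^{\bar\alpha}L_\alpha$, together with item~(5) to get $[L_\delta,\bi_\alpha]=-\bi_{[\delta,\alpha]}$; summing yields the claim. There is no serious obstacle: all the sign combinatorics and Jacobi-type relations have already been absorbed into Lemma~\ref{lem. Cartan equalities}, and the proposition should be read as its conceptual content, namely that the interior product defines a Cartan homotopy once the target differential is corrected by $L_\delta$. If anything is delicate, it is only the verification that $L_\delta^2=0$ and $[d,L_\delta]=0$ make the target a DGLA in the first place; these are one-line consequences of items~(4) and of $\delta^2=0$.
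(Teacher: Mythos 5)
Your proof is correct and takes essentially the same route as the paper: both reduce everything to Lemma~\ref{lem. Cartan equalities}, using item (1) for $[\bi_\alpha,\bi_\beta]=0$, items (2) and (5) for the displayed identity $[d+L_{\delta},\bi_{\alpha}]+\bi_{[\delta,\alpha]}=(-1)^{\bar{\alpha}}L_{\alpha}$, and item (6) for the axiom $\bi_{[\alpha,\beta]}=[\bi_\alpha,[d+L_\delta,\bi_\beta]]$ (the paper re-derives this last identity by substituting the displayed formula instead of citing item (6) outright, a purely cosmetic difference). Your preliminary verification that $(d+L_\delta)^2=0$, so that the target is a genuine DGLA, is also sound, though the paper had already established it in the paragraph preceding the proposition.
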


\begin{proof}
By Lemma \ref{lem. Cartan equalities}, we have
\[[d+L_{\delta},\bi_{\alpha}]+\bi_{[\delta,\alpha]}=[d,\bi_{\alpha}]=
(-1)^{\bar{\alpha}}L_{\alpha}\]
and then
\[\bi_{[\alpha,\beta]}=[\bi_{\alpha},[d,\bi_{\beta}]]=
[\bi_{\alpha},[d+L_{\delta},\bi_{\beta}]+\bi_{[\delta,\beta]}] =
[\bi_{\alpha},[d+L_{\delta},\bi_{\beta}]]
.\]

\end{proof}

Assume that $S=\bigoplus_{i\le 0} S^i$ and let $T\subset S^0$ be a  multiplicative subset.
Then, for  any $S$-DG-module $M$, every $\K$-derivation $S\to M$ extends in a unique way to a derivation
$T^{-1}S\to M$; therefore, $\Omega_{T^{-1}S/\K}=T^{-1}\Omega_{S/\K}$,
\[ \Omega^*_{T^{-1}S}=T^{-1}\Omega^*_S.\]
It is easy to verify  that the two natural maps
\[ \Der_{\K}^*(S,S)\to \Der_{\K}^*(T^{-1}S,T^{-1}S),\qquad
\Der_{\K}^*(\Omega^*_S,\Omega^*_S)\to \Der_{\K}^*(\Omega^*_{T^{-1}S},\Omega^*_{T^{-1}S}),\]
commute with the de Rham differential, interior products, Lie derivative and Cartan formulas, giving therefore a canonical morphism  of calculi
\[\xymatrix{\Der_{\K}^*(S,S)\times \Omega^*_S\ar[r]\ar[d]&\Omega^*_S\ar[d]\\
\Der_{\K}^*(T^{-1}S,T^{-1}S)\times \Omega^*_{T^{-1}S}\ar[r]&\Omega^*_{T^{-1}S}}\]
This implies that, for every DG-scheme $(E,\sS_E)$ over the field $\K$, the interior
 product induces a morphism of sheaves
\[ Der^*_{\K}(\sS_E,\sS_E)\times \Omega^*_{\sS_E}\to \Omega^*_{\sS_E}\]
which is still a calculus, where $\Omega^*_{\sS_E}$ is the  exterior algebra of the
quasi-coherent sheaf of differentials on $E$.

\bigskip

\bigskip
\section{Local cohomology of DG-sheaves}

The theory of local cohomology extends naturally to DG-sheaves over DG-schemes. In order to fix the notation and prove some homotopy invariant properties, we briefly recall   its construction in the quasi-coherent case, using the approach of \v{C}ech complex \cite{BS}.

Let $A=\bigoplus_{i}A^i$ be a fixed DG-ring.
Given an element $a\in Z^0(A)$ and an $A$-module $M=\oplus_i M^i$, we will denote
$M_a=\oplus_i M^i_a$ the localization of $M$ by the multiplicative subset of powers of $a$.

Since the localization is an exact functor on the category of $Z^0(A)$-modules,
we have $H^i(M_a)=H^i(M)_a$, for every $i\in \Z$.

Let $\underline{a}=(a_1,\ldots,a_n)$ be a sequence of elements of $Z^0(A)$; for every $A$-module $M$ and every subset $H=\{h_1<h_2<\cdots<h_j\}
\subseteq \{1,\ldots,n\}$,  denote by
\[ a_{\emptyset}=1,\quad a_H=a_{h_1}\cdots a_{h_j}.\]
Notice that $a_{H\cup K}|a_H a_K|a_{H\cup K}^2$, for every $H,K\subseteq \{1,\ldots,n\}$; therefore, $(M_{a_H})_{a_K}=M_{a_{H\cup K}}$ and if $H\subset K$ there exists a natural localization map
$M_{a_H}\to M_{a_K}$.

\begin{definition} In the above situation, denote by $\check{C}^*(a_1,\ldots,a_n;M)$ the  complex of $A$-modules

\begin{equation}\label{equ.cech}
0\to \check{C}^0(a_1,\ldots,a_n;M)\mapor{\delta}\check{C}^1(a_1,\ldots,a_n;M)\mapor{\delta}\cdots\to \check{C}^n(a_1,\ldots,a_n;M)\to 0,
\end{equation}
where
\[ \check{C}^0(a_1,\ldots,a_n;M)=M,\qquad
\check{C}^i(a_1,\ldots,a_n;M)=\prod_{H\subset \{1,\ldots,n\}, |H|=i} M_{a_H}\]
and
\[ \delta(m)_{h_0,h_1,\ldots,h_r}=\sum_{i}(-1)^im_{h_0,h_1,\ldots,\widehat{h_i},\ldots,h_r}.\]
is the \v{C}ech differential.
\end{definition}

\begin{definition} We will denote by
\[H^i_{\underline{a}}(M)=H^i(a_1,\ldots,a_n;M)\]
the $i$-th cohomology  group of the complex
$\check{C}^*(a_1,\ldots,a_n;M)$: notice that $H^i(a_1,\ldots,a_n;M)$ is an $A$-module.
\end{definition}

\begin{remark}\label{Remark.local_cohomolo-comology_supported_ideal}
Let $A=\bigoplus_{i}A^i$ be a fixed DG-ring and consider the scheme $Spec(Z^0(A))$.
For every $Z^0(A)$-module $F$, we can consider the associated quasi-coherent
sheaf $\widetilde{F}$ on $Spec(Z^0(A))$. Then, 
the complex $\check{C}^*(a_1,\ldots,a_n;F)$ computes the local cohomology of
$\widetilde{F}$  with  support on the ideal generated by $a_1,\ldots,a_n$.
\end{remark}

Every
permutation $\sigma$ of $\{1,\ldots,n\}$ induces
a natural isomorphism (see e.g. \cite{BS})
\[ \check{\sigma}\colon \check{C}^*(a_1,\ldots,a_n;M)\to
\check{C}^*(a_{\sigma(1)},\ldots,a_{\sigma(n)};M).\]
For later use, we point out that, via the natural isomorphism
$M_{a_1\cdots a_n}=M_{a_{\sigma(1)}\cdots a_{\sigma(n)}}$, the map
\[ \check{\sigma}\colon \check{C}^n(a_1,\ldots,a_n;M)\to
\check{C}^n(a_{\sigma(1)},\ldots,a_{\sigma(n)};M)\]
is simply given by multiplication by the signature of $\sigma$.

Moreover,
\[\check{C}^i(a_1,\ldots,a_n;M)=\check{C}^i(a_1,\ldots,a_n;A)\otimes_A M=
\check{C}^i(a_1,\ldots,a_n;Z^0(A))\otimes_{Z^0(A)} M\;.\]

Given any $b\in Z^0(A)$ and any integer $i$, we have a natural projection morphism of $A$-modules
\[\check{C}^i(a_1,\ldots,a_n,b; M)\to
\check{C}^i(a_1,\ldots,a_n; M)\to 0\;.\]
The same argument used in \cite[p. 86]{BS} shows that, if $b\in
\sqrt{(a_1,\ldots,a_n)}\subset Z^0(A)$, then the projection map
$\check{C}^\ast(a_1,\ldots,a_n,b; M)\to
\check{C}^\ast(a_1,\ldots,a_n; M)$ is a quasi-isomorphism in the category of complexes of $A$-modules.

\begin{theorem} The isomorphism class of the cohomology modules
\[ H^i(a_1,\ldots,a_n;M)\]
depends only on the ideal $\sqrt{(a_1,\ldots,a_n)}\subset Z^0(A)$.
More precisely, if
\[\sqrt{(a_1,\ldots,a_n)}=\sqrt{(b_1,\ldots,b_m)}\subset Z^0(A),\]
then there exists a \emph{canonical} isomorphism
\[ H^i(a_1,\ldots,a_n;M)\cong H^i(b_1,\ldots,b_m;M). \]
\end{theorem}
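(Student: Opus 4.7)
The plan is to reduce everything to the fact---stated immediately before the theorem and borrowed from \cite{BS}---that adjoining to a sequence any element lying in the radical of the ideal it generates gives a quasi-isomorphism of \v{C}ech complexes. Given two sequences $\underline{a}=(a_1,\ldots,a_n)$ and $\underline{b}=(b_1,\ldots,b_m)$ in $Z^0(A)$ with $\sqrt{(\underline{a})}=\sqrt{(\underline{b})}$, I would compare both sides to the concatenated sequence $\underline{c}=(a_1,\ldots,a_n,b_1,\ldots,b_m)$.

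First, I would prove by induction on $m$ that the natural projection
\[
\check{C}^*(\underline{c};M)\longrightarrow \check{C}^*(\underline{a};M)
\]
is a quasi-isomorphism: at the inductive step, $b_j\in\sqrt{(\underline{a})}\subseteq \sqrt{(a_1,\ldots,a_n,b_1,\ldots,b_{j-1})}$, so stripping off $b_j$ is precisely the quoted quasi-isomorphism. Symmetrically, using the permutation isomorphism $\check{\sigma}$ to reorder $\underline{c}$ as $(b_1,\ldots,b_m,a_1,\ldots,a_n)$, the same argument yields a quasi-isomorphism $\check{C}^*(\underline{c};M)\to \check{C}^*(\underline{b};M)$. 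Passing to cohomology and composing one isomorphism with the inverse of the other produces the desired identification
\[
H^i(\underline{a};M)\xleftarrow{\cong} H^i(\underline{c};M)\xrightarrow{\cong} H^i(\underline{b};M).
\]

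The real content of the theorem lies in the word \emph{canonical}: I must show that this composite is independent of the auxiliary choices---namely the ordering used in $\underline{c}$ and the order in which the entries are stripped off. I would handle this by showing that any two orderings of the stripping procedure give the same map on cohomology. Concretely, one checks that swapping two adjacent elements in the tail of a sequence induces the map $\check{\sigma}$ (on the top \v{C}ech term this is multiplication by a sign), and that this sign is absorbed when one composes with the next projection, since the quasi-isomorphisms factor compatibly through the intermediate sequences. Equivalently, in view of Remark~\ref{Remark.local_cohomolo-comology_supported_ideal}, when $A=Z^0(A)$ the groups $H^i(\underline{a};M)$ coincide with the local cohomology $H^i_I(\widetilde{M})$ for $I=\sqrt{(\underline{a})}$, so canonicity is forced by functoriality; in the DG case the same argument goes through term by term, since the \v{C}ech complexes are obtained from the ordinary ones by tensoring with $M$ over $Z^0(A)$.

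The main obstacle is therefore bookkeeping: carefully tracking the signs introduced by $\check{\sigma}$ and verifying that two zig-zag representatives of the isomorphism agree in cohomology. A clean way is to observe that the whole family of projection quasi-isomorphisms $\check{C}^*(\underline{a},\underline{d};M)\to \check{C}^*(\underline{a};M)$, indexed by finite tuples $\underline{d}\subset \sqrt{(\underline{a})}$, forms a directed system under refinement and that the induced maps on cohomology are compatible; canonicity of the resulting isomorphism for two sequences with common radical then follows formally by taking the common refinement $\underline{c}$.
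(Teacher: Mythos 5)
Your proposal is correct and follows essentially the same route as the paper: compare both \v{C}ech complexes to the one for the concatenated sequence $(a_1,\ldots,a_n,b_1,\ldots,b_m)$ via the projection quasi-isomorphisms (iterating the single-element statement from \cite{BS}) together with the permutation isomorphism $\check{\sigma}$. Your additional care about independence of the stripping order and the signs of $\check{\sigma}$ is a reasonable elaboration of the canonicity claim, which the paper itself leaves implicit in its two-sided diagram.
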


\begin{proof}
We have
\[
\xymatrix{
\check{C}^*(a_1,\ldots,a_n,b_1,\ldots,b_m;M)\ar[d]\ar[rr]^{\;\;\text{permutation}\;\;}_{\simeq}&&\check{C}^*(b_1,\ldots,b_m,a_1,\ldots,a_n;M)\ar[d]\\
\check{C}^*(a_1,\ldots,a_n;M)&&
 \check{C}^*(b_1,\ldots,b_m;M)}
\]
where the vertical maps are  the
surjective quasi-isomorphisms induced by projections.
\end{proof}

The complex of $A$-modules $\check{C}^*(a_1,\ldots,a_n;M)$ can be clearly interpreted as a double complex of $Z^0(A)$-modules.
We will denote by
$\H^*_{\underline{a}}(M)=\H^*(a_1,\ldots,a_n;M)$
the cohomology  of the associated total complex
$\tot(\check{C}^*(a_1,\ldots,a_n;M))$.

Thus, we have
\[ \tot(\check{C}^*(a_1,\ldots,a_n;M))=\tot(\check{C}^*(a_1,\ldots,a_n;A))\otimes_A M=\tot(\check{C}^*(a_1,\ldots,a_n;Z^0(A)))\otimes_{Z^0(A)} M
\]
and, since $\check{C}^*(a_1,\ldots,a_n;M)$ is a complex of finite length, we have two convergent
 spectral sequences: the former is
\[ E^{i,j}_2=H^i(H^j(a_1,\ldots,a_n;M))\Rightarrow \H^{i+j}(a_1,\ldots,a_n;M),\]
and, since the localization is an exact functor, the latter is
\[ E^{i,j}_2=H^i(a_1,\ldots,a_n;H^j(M))\Rightarrow \H^{i+j}(a_1,\ldots,a_n;M).\]

\begin{lemma}\label{lem.localcohomologylci}
Assume that $Z^0(A)$ is a Noetherian ring;
if $a_1,\ldots,a_n$ is a regular sequence in $Z^0(A)$ and $M$ is a complex of free $Z^0(A)$-modules, then
\[ \H^i(a_1,\ldots,a_n;M)=H^{i-n}(H^n(a_1,\ldots,a_n;M)),\]
for every $i\in \Z$.
\end{lemma}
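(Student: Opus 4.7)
The plan is to apply the first of the two spectral sequences introduced just before the statement,
\[ E_2^{i,j} = H^i(H^j(a_1,\ldots,a_n;M)) \Longrightarrow \H^{i+j}(a_1,\ldots,a_n;M), \]
and to show that under our hypotheses the inner cohomology $H^j(a_1,\ldots,a_n;M)$ vanishes for every $j\neq n$. Once this is established, only the row $j=n$ survives, the spectral sequence collapses at $E_2$, and one reads off
$\H^{k}(a_1,\ldots,a_n;M) = H^{k-n}(H^n(a_1,\ldots,a_n;M))$, which is precisely the claim.

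First I would reduce the vanishing of $H^j(a_1,\ldots,a_n;M)$ for $j\neq n$ to an ungraded statement about a single free $Z^0(A)$-module. Indeed, since every $a_\ell$ lies in $Z^0(A)$, localization at the multiplicative set generated by $a_\ell$ is a degree-preserving exact functor on the category of $Z^0(A)$-modules; the \v{C}ech differential is therefore homogeneous with respect to the $\Z$-grading inherited from $M$, and
\[ H^j(a_1,\ldots,a_n;M)^q = H^j(a_1,\ldots,a_n;M^q) \]
for every $q\in\Z$. Thus it suffices to prove that $H^j(a_1,\ldots,a_n;N) = 0$ for every $j\neq n$ whenever $N$ is a free $Z^0(A)$-module.

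Next I would invoke Remark~\ref{Remark.local_cohomolo-comology_supported_ideal}, which identifies $H^j(a_1,\ldots,a_n;N)$ with the local cohomology $H^j_I(\tilde N)$ of the associated quasi-coherent sheaf on $\Spec(Z^0(A))$ with support in the ideal $I=(a_1,\ldots,a_n)$. The \v{C}ech complex has length $n$, so the vanishing for $j>n$ is automatic. For $j<n$ I would appeal to the standard fact from commutative algebra that, over a Noetherian ring, $H^j_I(-)$ vanishes below the $I$-depth of the module; the hypothesis that $a_1,\ldots,a_n$ is a regular sequence in $Z^0(A)$ gives $\operatorname{depth}_I(Z^0(A))\geq n$, and since a free module is a (possibly infinite) direct sum of copies of $Z^0(A)$ and local cohomology over a Noetherian ring commutes with arbitrary direct sums, the same lower bound propagates to $N$.

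The only point requiring care is this last vanishing for $j<n$: the depth estimate for free modules together with the commutation of $H^j_I(-)$ with direct sums. Both are classical facts in the Noetherian setting, so once they are in hand the spectral sequence collapse is entirely formal and the conclusion follows immediately.
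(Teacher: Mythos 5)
Your proposal is correct and follows essentially the same route as the paper's proof: identify the \v{C}ech cohomology $H^j(a_1,\ldots,a_n;-)$ with local cohomology on $\Spec(Z^0(A))$ via Remark~\ref{Remark.local_cohomolo-comology_supported_ideal}, deduce its vanishing for $j\neq n$ from the regular-sequence hypothesis (the paper cites Grothendieck's theorem where you invoke the equivalent depth bound), extend to free modules by direct sums, and collapse the first spectral sequence stated before the lemma. The only difference is one of explicitness — you spell out the degree-wise reduction and the spectral-sequence degeneration that the paper leaves implicit.
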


\begin{proof}
As observed in Remark \ref{Remark.local_cohomolo-comology_supported_ideal},
for every $Z^0(A)$-module $F$, the complex
$\check{C}^*(a_1,\ldots,a_n;F)$ computes the local cohomology with
 support on the ideal generated by $a_1,\ldots,a_n$ of the quasi-coherent
sheaf $\widetilde{F}$ on $Spec(Z^0(A))$; therefore,
$H^i(a_1,\ldots,a_n;Z^0(A))=0$ for every $i\not=n$ \cite[Thm. 3.8]{GrothendieckLC}.
If $F$ is free then also $H^i(a_1,\ldots,a_n;F)=0$ for every $i\not=n$,
since it is a direct sum of copies of $H^i(a_1,\ldots,a_n;Z^0(A))$.
\end{proof}

Next, let us consider the functorial properties of local cohomology.

First, if  $f\colon M\to N$ a morphism of $A$-modules,
then $f$ induces a morphism of complexes of  $A$-modules
\[ f_*\colon \check{C}^*(a_1,\ldots,a_n;M)\to \check{C}^*(a_1,\ldots,a_n;N)\]
and, therefore,  morphisms
\[ f_i\colon H^i(a_1,\ldots,a_n;M)\to H^i(a_1,\ldots,a_n;N),\qquad
f_i\colon \H^i(a_1,\ldots,a_n;M)\to \H^i(a_1,\ldots,a_n;N).\]

\begin{lemma}\label{lem.homotopyinvariance1}
In the notation above, if $f\colon M \to N$ is a quasi-isomorphism,
then
\[f_i\colon \H^i(a_1,\ldots,a_n;M)\to \H^i(a_1,\ldots,a_n;N)\]
is an isomorphism for every $i$.
\end{lemma}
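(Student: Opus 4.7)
The plan is to reduce the statement to two standard ingredients: exactness of localization over $Z^0(A)$ and the comparison theorem for bounded spectral sequences. Since $f\colon M\to N$ is a quasi-isomorphism of $A$-modules, it is in particular a quasi-isomorphism between complexes of $Z^0(A)$-modules, and for every $a\in Z^0(A)$ the localization functor $(-)_a$ is exact. Consequently, for every subset $H\subseteq\{1,\ldots,n\}$ the induced map $M_{a_H}\to N_{a_H}$ is again a quasi-isomorphism.

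Next, I would use that for each fixed \v{C}ech degree $i$ the term
\[ \check{C}^i(a_1,\ldots,a_n;M)=\prod_{|H|=i} M_{a_H}\]
is a \emph{finite} product, so taking products preserves quasi-isomorphisms. Thus the morphism of double complexes induced by $f$ is a column-wise quasi-isomorphism of $\check{C}^*(a_1,\ldots,a_n;M)$ into $\check{C}^*(a_1,\ldots,a_n;N)$, where by ``columns'' I mean the complexes obtained by freezing the \v{C}ech degree and varying the internal degree of $M$ and $N$.

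To pass from the columns to the totals, I would invoke the second spectral sequence already displayed in the paper,
\[ E_2^{i,j}=H^i(a_1,\ldots,a_n;H^j(M))\;\Longrightarrow\;\H^{i+j}(a_1,\ldots,a_n;M),\]
and its analogue for $N$. Because localization is exact and $H^j(M)\to H^j(N)$ is an isomorphism for all $j$, the morphism $f$ induces an isomorphism on the $E_2$ page. Both spectral sequences are concentrated in \v{C}ech degrees $0,1,\ldots,n$, so the associated filtration on the total complex is bounded and convergence is automatic. The standard comparison theorem then yields isomorphisms on the abutment, i.e.\ the desired $f_i\colon \H^i(a_1,\ldots,a_n;M)\xrightarrow{\sim}\H^i(a_1,\ldots,a_n;N)$.

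The ``hard'' part here is really just the bookkeeping: there is no serious obstacle, since the argument bundles together two routine facts (exactness of localization and convergence of a bounded spectral sequence) and essentially mimics the proof of homotopy invariance for classical \v{C}ech cohomology.
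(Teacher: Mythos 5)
Your proposal is correct and follows essentially the same route as the paper, whose entire proof is to cite the convergent spectral sequence $E_2^{i,j}=H^i(a_1,\ldots,a_n;H^j(M))\Rightarrow \H^{i+j}(a_1,\ldots,a_n;M)$ (convergence being automatic because the \v{C}ech filtration has length $n$). Your write-up simply makes explicit the details the paper leaves implicit — exactness of localization giving the isomorphism on $E_2$, boundedness of the filtration, and the comparison theorem — so there is nothing to correct.
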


\begin{proof} Immediate consequence of the spectral sequence
\[ H^j(a_1,\ldots,a_n;H^i(M))\Rightarrow \H^{i+j}(a_1,\ldots,a_n;M).\]
\end{proof}

Let $g\colon A\to B$ be a morphism of DG-rings, then for every $A$-Module
$M$, setting $N=M\otimes _A B$, we have  natural maps $M\to N$,  $M_{a_H}\to M\otimes _A B_{g(a_H)}$,
and, therefore, a morphism of complexes
\[ g_*\colon \check{C}^*(a_1,\ldots,a_n;M)\to \check{C}^*(g(a_1),\ldots,g(a_n);M\otimes _A B)\]
inducing morphisms of $A$-modules
\[ g_*\colon H^i(a_1,\ldots,a_n;M)\to H^i(g(a_1),\ldots,g(a_n);M\otimes _A B).\]

\begin{lemma}\label{lem.homotopyinvariance2}
If $g\colon A\to B$ is a quasi-isomorphism of  DG-rings and
$M$ is a semifree  $A$-module,
then
\[ g_*\colon \H^i(a_1,\ldots,a_n;M)\to \H^i(g(a_1),\ldots,g(a_n);M\otimes_A B).\]
is an isomorphism for every $i$. \end{lemma}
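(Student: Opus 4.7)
The plan is to reduce the statement to the preservation of quasi-isomorphisms by tensor product with a semifree module, after identifying the two \v{C}ech complexes as tensor products of a common ``\v{C}ech skeleton'' with $M$.

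First, I would record the identifications, valid in the category of DG $A$-modules,
\[
\check{C}^*(a_1,\dots,a_n;M)=\check{C}^*(a_1,\dots,a_n;A)\otimes_A M,
\]
\[
\check{C}^*(g(a_1),\dots,g(a_n);M\otimes_A B)=\check{C}^*(g(a_1),\dots,g(a_n);B)\otimes_B(M\otimes_A B)=\check{C}^*(g(a_1),\dots,g(a_n);B)\otimes_A M,
\]
where the last equality uses the associativity of tensor products and the fact that each term $B_{g(a_H)}\otimes_B(M\otimes_A B)=M\otimes_A B_{g(a_H)}$. Under these identifications the morphism $g_*$ on total complexes becomes $\varphi\otimes_A\mathrm{id}_M$, where
\[
\varphi\colon \tot(\check{C}^*(a_1,\dots,a_n;A))\longrightarrow \tot(\check{C}^*(g(a_1),\dots,g(a_n);B))
\]
is induced by $g$ and the natural localization maps $A_{a_H}\to B_{g(a_H)}$.

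Next I would show that $\varphi$ is a quasi-isomorphism of DG $A$-modules. For each subset $H\subset\{1,\dots,n\}$ the map $A_{a_H}\to B_{g(a_H)}$ is itself a quasi-isomorphism: localization is an exact functor on the category of $Z^0(A)$-modules, hence commutes with taking cohomology, so $H^i(A_{a_H})=H^i(A)_{a_H}$ and $H^i(B_{g(a_H)})=H^i(B)_{g(a_H)}$, and the localization of an isomorphism is an isomorphism. Viewing $\check{C}^*(\underline a;A)$ and $\check{C}^*(g(\underline a);B)$ as double complexes (\v{C}ech degree versus internal DG degree), with \v{C}ech degree bounded by $n$, the filtration by \v{C}ech degree gives convergent spectral sequences with $E_1$ term equal to the cohomology of the columns, i.e.\ to $H^*(A_{a_H})$ (respectively $H^*(B_{g(a_H)})$). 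Since $\varphi$ induces an isomorphism on $E_1$ pages, it is a quasi-isomorphism on abutments.

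Finally, the statement follows from the fact, recalled right after Definition~\ref{def.semifree}, that if $F$ is a semifree $A$-module then $(-)\otimes_A F$ preserves quasi-isomorphisms: applied to $\varphi$ and $F=M$, this gives that $\varphi\otimes_A\mathrm{id}_M=g_*$ is a quasi-isomorphism, whence the induced map $\H^i(a_1,\dots,a_n;M)\to \H^i(g(a_1),\dots,g(a_n);M\otimes_A B)$ is an isomorphism for every $i$. The main technical point I expect is the second step, namely the verification that $\varphi$ is a quasi-isomorphism; the spectral sequence of the double complex filtered by \v{C}ech degree converges because the filtration is finite, and that makes the argument work, but some care is needed to identify the $E_1$ pages and to check that $\varphi$ is indeed induced termwise by the quasi-isomorphisms $A_{a_H}\to B_{g(a_H)}$.
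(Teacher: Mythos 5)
Your proof is correct and follows essentially the same route as the paper's: both rest on exactness of localization (giving the quasi-isomorphisms $A_{a_H}\to B_{g(a_H)}$), on the fact that tensoring with a semifree module preserves quasi-isomorphisms, and on the finiteness of the \v{C}ech filtration to pass to total complexes. The only difference is the order of operations---the paper tensors with $M$ termwise, obtaining quasi-isomorphisms $M_{a_H}\to (M\otimes_A B)_{g(a_H)}$, and then totalizes, whereas you first prove the ring-level statement for $\varphi$ and invoke semifreeness once at the end---which is an immaterial reorganization.
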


\begin{proof} For every
$a\in Z^0(A)$, we have  a quasi-isomorphism
$g\colon A_a\to B_{g(a)}$; then, since $M$ is semifree, we also have a quasi-isomorphism
\[ M_a=A_a\otimes_A M\to B_{g(a)}\otimes_A M.\]
By Lemma \ref{lem.homotopyinvariance1}, this induce quasi-isomorphisms
\[ \check{C}^i(a_1,\ldots,a_n;M)\to \check{C}^i(g(a_1),\ldots,g(a_n);M\otimes _A B),\]
\[   \tot(C^*(a_1,\ldots,a_n;M))\to \tot(C^*(g(a_1),\ldots,g(a_n);M\otimes_A B)).\]
\end{proof}

\begin{remark}\label{rem.calculus.localcohomology}
The calculus
$\Der^*_{\K}(A,A)\times \Omega^*_A\to \Omega^*_A$ induced by the interior product
and the de Rham differential
commutes with local cohomology construction. In fact, since it commutes with localizations,
 we have
for every
$a_1,\ldots,a_n\in Z^0(A)$ a sequence of calculi
\[ \Der^*_{\K}(A,A)\times C^j(a_1,\ldots,a_n,\Omega^*_A)\to C^j(a_1,\ldots,a_n,\Omega^*_A),
\qquad j=0,\ldots,n,\]
commuting with \v{C}ech differentials and so inducing a sequence of calculi
\[ \Der^*_{\K}(A,A)\times H^j(a_1,\ldots,a_n;\Omega^*_A)\to H^j(a_1,\ldots,a_n;\Omega^*_A),
\qquad j=0,\ldots,n.\]
\end{remark}

\begin{remark}
Since the construction of $H^i(a_1,\ldots,a_n;M)$ commutes with localization, we have
\[ H^i(a_1,\ldots,a_n;M_b)=H^i(a_1,\ldots,a_n;M)_b,\qquad \forall \  b\in Z^0(A),\]
and therefore, if $\sF$ is any quasi-coherent sheaf on a DG-scheme $(T,\sA)$, and $Y\subset T$
is any closed subscheme of $T$, we have the local cohomology sheaves
$\sH_Y^i(T,\sF)$ defined locally as $H^i(a_1,\ldots,a_n,\sF)$, where $a_1,\ldots,a_n$ is a set of generators of the ideal sheaf of $Y$ in $T$.
This applies in particular for the quasi-coherent DG-sheaves $\Omega^k_{\sA}$.
\end{remark}

\bigskip
\section{The derived cycle class of $Z$}

Let us go back to the  situation described in the introduction, i.e., $Z\subset X$ local complete intersection of codimension $p$, defined as the zero locus of a section $f$ of a vector bundle $\pi \colon E\to U$ of rank $p$.
As in \cite{bloch}, we have the local cohomology sheaves $\sH^i_Z(X,\Omega^j_X)$ and
a canonical cycle class
\[ \{Z\}\in
\H^{2p}(X,\sH^p_Z(X,\Omega^p_X)\mapor{d}\sH^p_Z(X,\Omega^{p+1}_X)\to\cdots),
\]
where $\sH^p_Z(X,\Omega^j_X)$ is a DG-sheaf concentrated in degree $p+j$. Recall that, since $Z$ is a locally complete intersection and $\Omega^j_X$ is locally free, then $\sH^i_Z(X,\Omega^j_X)=0$ for every $i\not=p$.  The image of $\{Z\}$ under the injective map
\[ \H^{2p}(X,\sH^p_Z(X,\Omega^p_X)\mapor{d}\sH^p_Z(X,\Omega^{p+1}_X)\to\cdots)
\hookrightarrow \H^{2p}(X,\sH^p_Z(\Omega^p_X))=\Gamma(X,\sH^p_Z(\Omega^p_X))
\]
is the global section
$\{Z\}'\in \Gamma(X,\sH^p_Z(\Omega^p_X))=\Gamma(U,\sH^p_Z(\Omega^p_U))$ given in local coordinates by
\[ \{Z\}'=\frac{df_1\wedge\cdots\wedge df_p}{f_1\cdots f_p}\in
\check{C}^p(f_1,\ldots,f_p;\Omega_X^p),\]
where $f_1,\ldots,f_p$ are the components of $f$ with respect to a local trivialization of the bundle $E$.
By local duality,  the section $\{Z\}'$ is non trivial at every point of $Z$.

Similarly, working  in the DG-scheme $(E,\sS)$, if $U^0\subset E$ is the image of the zero section $U\to E$, we have a canonical cycle class $\{U^0\}'\in \Gamma(E,\sH^p_{U^0}(\Omega^p_{\sS}))$,
given in local coordinates by
\[ \{U^0\}'=\frac{dz_1\wedge\cdots\wedge dz_p}{z_1\cdots z_p},\]
where $z_1,\ldots,z_p$ are a set of linear coordinates in the fibers of the bundle $E$. The same proof of the classical case shows that $\{U^0\}'$ is independent from the choice of the coordinate system.

Since $U^0$ is smooth of codimension $p$ in $E$ and the DG-sheaves $\Omega^j_{\sS}$ are semifree, we have $\sH^i_{U^0}(E,\Omega^j_{\sS})=0$ for every $j$ and every $i\not=p$.

\begin{definition} In the above notation,
for every $j\ge 0$, we denote $\sK^j=\pi_* \sH^p_{U^0}(E,\Omega^j_{\sS})$.\end{definition}

Every $\sK^j$ is a quasi-coherent DG-sheaf on $U$.

\begin{lemma}\label{lem. quasi iso K cohom locale} For every $j\ge 0$, the section $f$ induces  a  quasi-isomorphism of DG-sheaves on $U$
\[ \sK^j\to\sH^p_{Z}(U,\Omega^j_U)=\sH^p_{Z}(X,\Omega^j_X),\]
with $\sH^p_{Z}(U,\Omega^j_U)$   concentrated in degree $p+j$.
\end{lemma}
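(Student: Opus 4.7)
The plan is to reduce to a local, affine model and then apply the homotopy-invariance results of Section~9. Since the statement is local on $U$, I restrict to an affine $V=\Spec P$ trivializing $\sE$; then $f=(f_1,\ldots,f_p)$ is a regular sequence in $P$ cutting out $Z\cap V$, and $\sS$ corresponds on $V$ to the DG-algebra $S=P[z_1,\ldots,z_p,y_1,\ldots,y_p]$ with $\deg z_i=0$, $\deg y_i=-1$, $\delta(y_i)=f_i-z_i$. The zero-section ideal is $(z_1,\ldots,z_p)\subset S^0=P[z_i]$, so $\sK^j$ (with its natural $+p$-shift coming from local cohomology) corresponds to $H^p(z_1,\ldots,z_p;\Omega^j_S)$, and $\sH^p_Z(U,\Omega^j_U)$ to $H^p(f_1,\ldots,f_p;\Omega^j_P)$. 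The section $f$ gives the closed embedding $(V,\Oh_V)\to (E|_V,\sS)$, corresponding to the surjective DG-algebra quasi-isomorphism $\varphi\colon S\to P$, $z_i\mapsto f_i$, $y_i\mapsto 0$, and the map to be constructed factors as
\[
H^p(z_1,\ldots,z_p;\Omega^j_S)\xrightarrow{\;\varphi_*\;}H^p(f_1,\ldots,f_p;\Omega^j_S\otimes_S P)\longrightarrow H^p(f_1,\ldots,f_p;\Omega^j_P),
\]
the first arrow being the functoriality of Section~9 and the second induced by the canonical map $\Omega^j_S\otimes_S P\to\Omega^j_P$ ($dz_i\mapsto df_i$, $dy_i\mapsto 0$).

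The crucial technical step will be to check that $\Omega^j_S\otimes_S P\to\Omega^j_P$ is a quasi-isomorphism of complexes of $P$-modules. For $j=1$ the kernel is the free $P$-module on $\{dz_i-df_i,\,dy_i\}$; since $L_\delta(dy_i)=-d(f_i-z_i)=dz_i-df_i$, this kernel splits as a direct sum of acyclic two-term complexes $P\xrightarrow{\sim}P$ and is contractible. For general $j$, I intend to write $\Omega^j_S\otimes_S P=\Sym^j_P(\Omega^1_S\otimes_S P)$ and $\Omega^j_P=\Sym^j_P\Omega^1_P$: both are symmetric powers of degree-wise $P$-free complexes, and in characteristic zero $\Sym^j_P$ preserves quasi-isomorphisms between such.

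Granted the sub-claim, the conclusion is a direct assembly of the results of Section~9. The $S$-module $\Omega^j_S=\Sym^j_S\Omega^1_S$ is semifree (its $S$-generators $dx_\alpha,dz_i,dy_i$ have bounded-above cohomological degree), so Lemma~\ref{lem.homotopyinvariance2} gives that $\varphi_*$ is an isomorphism on total hypercohomology $\H^*$, and Lemma~\ref{lem.homotopyinvariance1} combined with the sub-claim does the same for the second arrow. Since $z_1,\ldots,z_p$ is regular in $S^0$ and $\Omega^j_S$ is $S^0$-free in each cohomological degree, Lemma~\ref{lem.localcohomologylci} gives $\H^i(z_1,\ldots,z_p;\Omega^j_S)=H^{i-p}(H^p(z_1,\ldots,z_p;\Omega^j_S))=H^i(\sK^j)$; the same lemma applied to the $P$-free, degree-$j$ module $\Omega^j_P$ shows $\H^i(f_1,\ldots,f_p;\Omega^j_P)$ vanishes for $i\neq p+j$ and equals $\sH^p_Z(U,\Omega^j_U)$ for $i=p+j$. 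Combining these identifications both produces the desired quasi-isomorphism and shows that $\sH^p_Z(U,\Omega^j_U)$ is concentrated in degree $p+j$. The main obstacle is the sub-claim about $\Omega^j_S\otimes_S P\to\Omega^j_P$, particularly the $\Sym^j_P$ step for $j>1$; once that is in hand, everything else is formal.
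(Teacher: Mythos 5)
Your proposal is correct, and its overall skeleton coincides with the paper's proof: localize to an affine $V=\Spec P$ trivializing $\sE$, factor the comparison through $\Omega^j_S\otimes_S P$, use Lemma~\ref{lem.homotopyinvariance2} for the arrow induced by $\varphi\colon S\to P$, Lemma~\ref{lem.homotopyinvariance1} for the arrow induced by $\Omega^j_S\otimes_S P\to\Omega^j_P$, and Lemma~\ref{lem.localcohomologylci} to pass from hypercohomology $\H^*$ back to the complexes $H^p(\cdot)$ and to get the concentration in degree $p+j$. The one place where you genuinely diverge is the crucial sub-claim that $\Omega^1_S\otimes_S P\to\Omega^1_P$ is a quasi-isomorphism: the paper deduces this from general properties of the cotangent complex (smoothness of $U$ makes $\Omega^1_S\otimes_S P$ a model of $\L_{P/\K}\simeq\Omega^1_P$), then upgrades it to a homotopy equivalence by semifreeness and concludes for all $j$; you instead compute the kernel explicitly as the free $P$-module on $\{dz_i-df_i,\,dy_i\}$, observe that $L_\delta(dy_i)=dz_i-df_i$ makes it a sum of contractible two-term complexes, and then handle $j>1$ by the characteristic-zero fact that $\Sym^j_P$ (as a direct summand of the $j$-th tensor power, via the symmetrization projector) preserves quasi-isomorphisms between bounded degreewise-projective complexes. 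Your route is more elementary and self-contained, at the cost of having to justify the $\Sym^j$ step by hand; the paper's route is shorter but leans on the cotangent-complex formalism and leaves the passage from $\Omega^1$ to $\Omega^j$ equally terse (homotopy equivalences of semifree modules are preserved by $\Sym^j$ for the same tensor-power reason). Both arguments are sound, and each fills in a step the other leaves implicit.
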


\begin{proof}
The question is local, therefore we may assume $U=Spec(P)$ affine and $E$ trivial on $U$, i.e., $E=Spec(P[z_1,\ldots,z_p])$.
Thus
\[ \Gamma(E,\sS)=S=P[z_1,\ldots,z_p,y_1,\ldots,y_p]\qquad\text{with}\quad \deg(y_i)=-1\quad d(y_i)=f_i-z_i.\]
The map, induced by $f$:
\[ S\to P,\qquad y_i\mapsto 0,\quad z_i\mapsto f_i\]
is a quasi-isomorphism and then, by Lemma~\ref{lem.homotopyinvariance2} we have
isomorphisms of $P$-modules
\[\H_{U^0}^i(E,\Omega^j_S)\to \H_{Z}^i(U,\Omega^j_S\otimes_S P),\]
that, according to Lemma~\ref{lem.localcohomologylci} gives a quasi-isomorphism of DG-sheaves
\[\sK^j=\pi_*\sH_{U^0}^p(E,\Omega^j_{\sS})\to \sH_{Z}^p(U,\Omega^j_{\sS}\otimes_{\sS} \Oh_U).\]
On the other hand, since $U$ is smooth, by general properties of the cotangent complex,
the natural map $\Omega^1_S\otimes_S P\to \Omega^1_P$
is a quasi-isomorphism, and also  a homotopy equivalence since $\Omega^1_S\otimes_S P$ and
$\Omega^1_P$ are semifree. This implies that every morphism $\Omega^j_S\otimes_S P\to \Omega^j_P$ is a homotopy equivalence; then, by Lemma~\ref{lem.homotopyinvariance1}  we get   isomorphisms
\[\H_{Z}^i(U,\Omega^j_S\otimes_S P)\to \H_{Z}^i(U,\Omega^j_P)\]
and so a quasi-isomorphism of DG-sheaves
\[ \sH_{Z}^p(U,\Omega^j_{\sS}\otimes_{\sS} \Oh_U)\to \sH_{Z}^p(U,\Omega^j_U).\]
\end{proof}

It is immediate to observe that the map
$\Gamma(U,\sK^p)\to\Gamma(U,\sH^p_{Z}(\Omega^p_U))$
sends the cycle class $\{U^0\}'$ into the cycle class $\{Z\}'$. In particular, $\{U^0\}'$ gives a non trivial cohomology class in $\H^{2p}(U,\sK^p)$ and, since $\{U^0\}'$ is annihilated by the de Rham differential, it gives also a non trivial hypercohomology class
\[ [\{U^0\}']\in \H^{2p}(U,\sK^{\ge p})=\H^{2p}(U,\sK^p\mapor{d}\sK^{p+1}
\mapor{d}\cdots).\]

\bigskip
\section{Proof of the main theorem}

Finally, we are ready for the proof of the main theorem. Recall that $\sK^j=\pi_* \sH^p_{U^0}(E,\Omega^j_{\sS})$, $\sM=\DER_{\Oh_U}^*(\Sym^*_{\Oh_U}(\sE^\vee[1]\oplus \sE^\vee),\Sym^*_{\Oh_U}(\sE^\vee[1]\oplus \sE^\vee))$  and
$\sM_{\perp}\subset \sM$ is  the subsheaf of derivations preserving the ideal generated by $\sE^\vee$.

The de Rham differential induces a map $d\colon \sK^*\to \sK^*$ of degree $+1$
where $\sK^*=\oplus_i \sK^i=\oplus_i  \pi_* \sH^p_{U^0}(E,\Omega^i_{\sS})$, inducing a structure of double differential graded sheaf on
$\sK^*$.

Then, according to Remark~\ref{rem.calculus.localcohomology},
the internal product induces a calculus
\[ \sM\times \sK^*\to \sK^*.\]

Passing to an affine open cover $\sU$, by Lemma \ref{lem.TWforcontractions}, we get a calculus
\[ TW(\sU,\sM)\times TW(\sU,\sK^*)\to TW(\sU,\sK^*),\quad \text{where}\quad TW(\sU,\sK^*)=
\oplus_j TW(\sU,\sK^j).\]

For simplicity of notation, we still denote by $\gamma\in
TW(\sU,\sK^{p})$
the image of the canonical cycle class $\{U^0\}'$ under the natural map
$\Gamma(U,\sK^{p})\to TW(\sU,\sK^{p})$ described in
Lemma~\ref{lem.naturalmap}; we will also denote by $TW(\sU,\sK^{\ge p})=\oplus_{j\ge p} TW(\sU,\sK^j)$.

Next, consider the following four differential graded Lie algebras
\[ L=\Hom^*_{\K}(TW(\sU,\sK^*),TW(\sU,\sK^*)),\qquad
L(\gamma_{\perp})=\{f\in L\mid f(\gamma)=0\},
\]
\[ \bar{L}=\{f\in L\mid f(TW(\sU,\sK^{\ge p}))\subset TW(\sU,\sK^{\ge p})\},\qquad
\bar{L}(\gamma_{\perp})=L(\gamma_{\perp})\cap \bar{L},\]
they give a commutative diagram of inclusions
\[\xymatrix{\bar{L}(\gamma_{\perp})\ar[d]^{j_\perp}\ar[r]^{\bar{\rho}}&\bar{L}\ar[d]^j\\
L(\gamma_{\perp})\ar[r]^{\rho}&L.}\]
Note that this diagram induces a morphism of DGLAs $TW(j_{\perp}) \to TW(j)$.

The morphism $\bl$ induced by the Cartan homotopy
\[ \bi\colon  TW(\sU,\sM)\to L=\Hom^*_{\K}(TW(\sU,\sK^*),TW(\sU,\sK^*))\]
preserves the subcomplex $TW(\sU,\sK^{\ge p})=\oplus_{j\ge p} TW(\sU,\sK^j)$, i.e.,
$\bl(TW(\sU,\sM))\subset \bar{L}$, and then
 Corollary~\ref{cor.cartan.infinito}
gives a canonical $L_{\infty}$ morphism
\[ \alpha_{\infty}\colon TW(\sU,\sM)\dashrightarrow TW(j).\]
By Remark~\ref{rem.quasiisoTWcono} the cohomology of $TW(j)$ is the same as the cohomology of
\[ \Hom^*_{\K}(TW(\sU,\sK^{\ge p}),TW(\sU,\sK^{<p}))[-1],\quad \text{where}\quad TW(\sU,\sK^{<p})=
\frac{TW(\sU,\sK^{*})}{TW(\sU,\sK^{\ge p})}.\]
Analogously, since
\[
\bi (TW(\sU,\sM_{\perp}))\subset L(\gamma_\perp),\quad \text{and}\quad
\bl(TW(\sU,\sM_{\perp}))\subset \bar{L}(\gamma_\perp),\]
we have a canonical $L_{\infty}$ morphism
\[ \alpha_{\infty}\colon TW(\sU,\sM_\perp)\dashrightarrow TW(j_\perp).\]
Therefore, we have a commutative diagram of $L_{\infty}$ morphisms
\begin{equation}\label{equ.quadrato}
\xymatrix{TW(\sU,\sM_{\perp})\ar[d]^{\chi} \ar@{-->}[r]^{\alpha_{\infty}}  &TW(j_\perp)\ar[d]^{\rho}\\
 TW(\sU,\sM)\ar@{-->}[r]^{\alpha_{\infty}} &TW(j)}\end{equation}

The cohomology of $TW(j_\perp)$ is the same as the cohomology of
\[ \{f\in \Hom^*_{\K}(\sU,TW(\sK^{\ge p}),TW(\sU,\sK^{<p}))[-1]\mid f(\gamma)=0\}.\]
Since $\gamma$ is non trivial in the cohomology of $TW(\sU,\sK^{\ge p})$, we have that the inclusion
$\rho\colon TW(j_\perp)\hookrightarrow TW(j)$ is injective in cohomology (see Example~\ref{ex.exampleclosedclass})
and the cohomology of its homotopy fiber  $TW(\rho)$ is the
same as
\[\Hom^*_{\K}(\K\gamma,TW(\sK^{<p}))[-2]=TW(\sU,\sK^{<p})[2p-2],\]
where the last equality follows from the fact that $\gamma$ has degree $2p$. Note that,  by Lemma~\ref{lem.criterion},
the homotopy fiber of $\rho\colon TW(j_\perp)\hookrightarrow TW(j)$ is homotopy abelian.
We are now ready to prove the main result of this paper.

\begin{theorem} In the above setup, the composition of the semiregularity map
\[H^1(Z,N_{Z|X})\xrightarrow{\pi}H^{p+1}(X,\Omega_X^{p-1})\]
with the natural map
\[H^{p+1}(X,\Omega_X^{p-1})\to
\H^{2p}(X,\Omega^0_X\mapor{d}\cdots\mapor{d}\Omega^{p-1}_X)\]
annihilates every obstruction to embedded deformations of $Z$ in $X$.
If the Hodge-de Rham spectral sequence of $X$ degenerates at level $E_1$ (e.g. if $X$ is smooth proper \cite{DI}), then  $\pi$ also annihilates every obstruction.
\end{theorem}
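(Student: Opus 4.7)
The plan is to extract the obstruction-annihilating consequence from the commutative square \eqref{equ.quadrato} of $L_\infty$ morphisms, by applying Example~\ref{ex.quadrato.morfismo.infinito} to obtain an $L_\infty$ morphism between the homotopy fibers
\[
\beta_\infty\colon TW(\chi)\dashrightarrow TW(\rho),
\]
and then invoking the fact, already observed above via Lemma~\ref{lem.criterion} and Example~\ref{ex.exampleclosedclass}, that $TW(\rho)$ is homotopy abelian. Since by Theorem~\ref{thm.homotopyfibercontrolligHilb} the DGLA $TW(\chi)$ controls $\Hilb_{Z|X}$, the obstructions to embedded deformations of $Z$ in $X$ sit inside $H^2(TW(\chi))$, and the linear part of $\beta_\infty$ sends them into $H^2(TW(\rho))$, where they must all vanish because $TW(\rho)$ is homotopy abelian. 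Hence every obstruction lies in the kernel of the induced map
\[
H^2(TW(\chi))\longrightarrow H^2(TW(\rho)).
\]

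The rest is identification. On the left, I would combine Theorem~\ref{thm.homotopyfibercontrolligHilb} (or directly Theorem~\ref{teo. dgla globale HILBERT scheme}) with Remark~\ref{rem.quasiisoTWcono} to identify $H^2(TW(\chi))$ with the usual obstruction space $H^1(Z,N_{Z|X})$. On the right, using Remark~\ref{rem.quasiisoTWcono} again together with Lemma~\ref{lem.criterion}, the cohomology of $TW(\rho)$ is that of $\Hom^*_\K(\K\gamma,TW(\sU,\sK^{<p}))[-2]$, so
\[
H^2(TW(\rho))\;\cong\;H^{2p}(TW(\sU,\sK^{<p})).
\]
By Lemma~\ref{lem. quasi iso K cohom locale} and the standard spectral sequence/Thom--Whitney quasi-isomorphism of Example~\ref{ex.cech semicosimplicial}, the right-hand side is naturally isomorphic to
\[
\H^{2p}\!\bigl(X,\Omega^0_X\xrightarrow{d}\cdots\xrightarrow{d}\Omega^{p-1}_X\bigr),
\]
via the short exact sequence of complexes of sheaves $\sK^{\ge p}\to \sK^*\to \sK^{<p}$ and the identification $\sK^{<p}\simeq (\Omega^0_X\to\cdots\to\Omega^{p-1}_X)[\;\cdot\;]$ obtained by the connecting map.

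The main technical step is to check that, under these identifications, the linear part of $\beta_\infty$ is exactly the composition $\tau\circ\pi$. Here is where I would work: the linear part of the Cartan-induced $L_\infty$ morphism of Corollary~\ref{cor.cartan.infinito} is, up to a homotopy given by $t\,\bl_a+dt\,\bi_a$, the Lie derivative $\bl$; after passing to the homotopy fiber of $\rho$ and projecting onto $TW(\sU,\sK^{<p})$ as in Remark~\ref{rem.quasiisoTWcono}, what remains is precisely \emph{interior product with the cycle class $\gamma=\{U^0\}'$} (which modulo the Koszul--Tate quasi-isomorphism becomes $\{Z\}'$). Explicitly, a degree-$2$ cocycle $\eta$ in $TW(\chi)$ projects to a \v{C}ech cocycle with values in $N_{Z|X}$, and its image under $\beta_\infty$ is, up to signs, $\bi_\eta\gamma\in TW(\sU,\sK^{<p})$ in degree $2p$; matching this with the definition of $\pi$ via $\contr\{Z\}'$ recalled in the Introduction and with the truncation $\tau$ induced by $\sK^{\ge p}\hookrightarrow\sK^*$ gives the desired identification. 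This is the step I expect to be the main obstacle: it is essentially bookkeeping of signs, shifts, and Koszul normalizations to match the definition of $\pi$ from \cite{bloch} with the Cartan-calculus output, using Lemma~\ref{lem. Cartan equalities} and Remark~\ref{rem.calculus.localcohomology} to commute contraction past the local cohomology and Thom--Whitney constructions.

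For the final statement, if the Hodge--de Rham spectral sequence of $X$ degenerates at $E_1$, then the natural truncation map $\tau\colon H^{p+1}(X,\Omega^{p-1}_X)\to \H^{2p}(X,\Omega^0_X\to\cdots\to\Omega^{p-1}_X)$ is injective (standard, cf.\ \cite{DI}), so annihilation of obstructions by $\tau\pi$ forces annihilation by $\pi$ itself. In the smooth proper case this degeneration is automatic, yielding the Corollary.
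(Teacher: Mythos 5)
Your proposal follows the paper's own route step for step: the $L_\infty$ morphism between homotopy fibers obtained from Example~\ref{ex.quadrato.morfismo.infinito} applied to the square \eqref{equ.quadrato}, homotopy abelianity of $TW(\rho)$, control of $\Hilb_{Z|X}$ by $TW(\chi)$, and identification of the linear part with contraction against the cycle class. There is, however, one genuine error in your identification of the target. You assert that $H^{2p}(TW(\sU,\sK^{<p}))$ is \emph{isomorphic} to the global hypercohomology $\H^{2p}(X,\Omega^0_X\mapor{d}\cdots\mapor{d}\Omega^{p-1}_X)$, via a quasi-isomorphism of complexes of sheaves $\sK^{<p}\simeq(\Omega^0_X\to\cdots\to\Omega^{p-1}_X)$ ``obtained by the connecting map''. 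No such quasi-isomorphism can exist: by Lemma~\ref{lem. quasi iso K cohom locale}, each $\sK^j$ is quasi-isomorphic to $\sH^p_Z(X,\Omega^j_X)$, a sheaf supported on $Z$, while the sheaves $\Omega^j_X$ are locally free with full support. What is true, and what the paper proves, is that $H^{2p}(TW(\sU,\sK^{<p}))$ computes the hypercohomology \emph{with supports} $\H_Z^{2p}(X,\Omega^0_X\mapor{d}\cdots\mapor{d}\Omega^{p-1}_X)$ (using that the local-cohomology spectral sequence degenerates because $Z$ is a local complete intersection); the global group appearing in the statement of the theorem is reached only afterwards, by composing with the natural map $\H_Z^{2p}(X,-)\to\H^{2p}(X,-)$ that forgets supports, and this map is in general far from being an isomorphism.

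This slip does not destroy your architecture, because the annihilation of obstructions already takes place in $H^2(TW(\rho))\cong\H_Z^{2p}(X,\Omega^0_X\mapor{d}\cdots\mapor{d}\Omega^{p-1}_X)$, and composing the zero map with the forget-supports map still gives zero. But it does change what remains to be proved: instead of matching your map with $\tau\circ\pi$ under a (nonexistent) isomorphism, you must verify that $\tau\circ\pi$ equals the composition of your contraction map with the forget-supports map. This is exactly the content of the two commutative diagrams at the end of the paper's proof, and it is where Bloch's definition of $\pi$ --- which itself factors through $H^{p+1}_Z(X,\Omega^{p-1}_X)$, as recalled in the introduction --- enters. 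Your sketch of the ``main technical step'' (the linear part of the morphism composed with the quasi-isomorphism of Remark~\ref{rem.quasiisoTWcono} is contraction with $\gamma=\{U^0\}'$, and $\{U^0\}'$ maps to $\{Z\}'$ under $\sK^p\to\sH^p_Z(U,\Omega^p_X)$) is consistent with how the paper closes this step; once restated over the supported cohomology groups rather than the global ones, it completes the proof.
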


\begin{proof}
The Example~\ref{ex.quadrato.morfismo.infinito} applied to the commutative square \eqref{equ.quadrato} gives an $L_{\infty}$ morphism of homotopy fibers
\[ \alpha_{\infty}\colon TW(\chi)\dashrightarrow TW(\rho)\]
and therefore a natural transformation of the associated deformation functors
\[ \alpha_\infty\colon \Def_{TW(\chi)}\to  \Def_{TW(\rho)}.\]
By Theorem~\ref{thm.homotopyfibercontrolligHilb} the functor
$\Def_{TW(\chi)}$ is isomorphic to the functor $\Hilb_{Z|X}$ of embedded deformations; then,
the obstruction map associated with $\alpha_{\infty}$ is
\[ \alpha_{\infty}\colon H^2(TW(\chi))=H^1(Z,N_{Z|X})\to H^2(TW(\rho))=H^{2p}(TW(\sU,\sK^{<p}))\]
and it sends obstructions to embedded deformations to obstructions of the functor $\Def_{TW(\rho)}$. Since $TW(\rho)$ is homotopy abelian, the functor $\Def_{TW(\rho)}$ is unobstructed and this proves that the above map annihilates every obstruction.

Next, in view of the surjective quasi-isomorphisms of quasi-coherent sheaves
$\sK^j\to \sH^p_{Z}(X,\Omega^j_X)$, given by Lemma \ref{lem. quasi iso K cohom locale},
the cohomology of
$TW(\sU,\sK^{<p})$ is isomorphic to the hypercohomology of the complex
\[ TW(\sU,\sH^p_{Z}(\Omega^0_X)\mapor{d}\cdots\mapor{d}\sH^p_{Z}(\Omega^{p-1}_X))\]
and therefore it is also isomorphic to the hypercohomology over $U$ of the complex of sheaves
\[ \sH^p_{Z}(\Omega^0_X)\mapor{d}\cdots\mapor{d}\sH^p_{Z}(\Omega^{p-1}_X).\]
Since $Z$ is a locally complete intersection, the spectral sequence of local cohomology degenerates. Thus,
\[ \H^*(U,\sH^p_{Z}(U,\Omega^0_X)\mapor{d}\cdots\mapor{d}\sH^p_{Z}(U,\Omega^{p-1}_X))=
\H_Z^*(U,\Omega^0_X\mapor{d}\cdots\mapor{d}\Omega^{p-1}_X)\]
and, therefore, the map
\[ \alpha_{\infty}\colon H^1(Z,N_{Z|X})\to H^2(TW(\rho))=
\H_Z^{2p}(U,\Omega^0_X\mapor{d}\cdots\mapor{d}\Omega^{p-1}_X)\]
annihilates the obstructions to embedded deformations.
By general results about local cohomology of finite complexes of sheaves,
we have a natural map
\[ \H_Z^{2p}(U,\Omega^0_X\mapor{d}\cdots\mapor{d}\Omega^{p-1}_X)=
\H_Z^{2p}(X,\Omega^0_X\mapor{d}\cdots\mapor{d}\Omega^{p-1}_X)\to
\H^{2p}(X,\Omega^0_X\mapor{d}\cdots\mapor{d}\Omega^{p-1}_X).\]
In order to conclude the proof, it is now sufficient to prove the commutativity of
the diagram
\[ \xymatrix{H^1(Z,N_{Z|X})\ar[r]^{\alpha_{\infty}\qquad}\ar[d]^{\pi}&
\H_Z^{2p}(U,\Omega^0_X\mapor{d}\cdots\mapor{d}\Omega^{p-1}_X)\ar[d]\\
H^{p+1}(X,\Omega_X^{p-1})\ar[r]&
\H^{2p}(X,\Omega^0_X\mapor{d}\cdots\mapor{d}\Omega^{p-1}_X).}\]
According to the definition of the semiregularity map given in the introduction, it is sufficient to prove that we have a commutative diagram
\[ \xymatrix{H^1(Z,N_{Z|X})\ar[r]^{\contr\{Z\}'}\ar[dr]^{\alpha_{\infty}}&H^{p+1}_Z(U,\Omega_X^{p-1})\ar[d]\\
&\H_Z^{2p}(U,\Omega^0_X\mapor{d}\cdots\mapor{d}\Omega^{p-1}_X).
}\]
Next, the composition of the
linear part of the $L_{\infty}$ morphism
$\alpha_{\infty}\colon TW(\chi)\dashrightarrow TW(\rho)$ with the quasi-isomorphism
$TW(\rho)\to TW(\sU,\sK^{<p})[2p-2]$ described in Remark~\ref{rem.quasiisoTWcono} is induced by the internal product with the canonical cycle class $\{U^0\}'$ and the conclusion follows from the fact that
$\{U^0\}'$ maps onto $\{Z\}'$ via the morphism $\sK^p\to \sH_Z^p(U,\Omega^p_X)$.
\end{proof}

\begin{remark}\label{rem.locallytrivialdeformations}
An essentially equivalent proof, but technically easier,
shows that the semiregularity map annihilates
obstructions to embedded locally trivial deformations of a local complete
intersection subvariety $Z$, without the set-up assumption of the introduction.
To this end it is sufficient to take $(E,\sS)=(X,\Oh_X)$, $\sM=\Theta_X$ the tangent sheaf of $X$ and
$\sM_{\perp}=\Theta_X(-\log(Z))$
the subsheaf of derivations of $\Oh_X$ preserving the ideal of $Z$,
see \cite{donarendiconti}.
\end{remark}

\end{document}